\fi\ProvidesPackage{diagrams}[2024/11/20 v3.97 Paul Taylor's commutative
diagrams]
\else\message{WARNING: the \string\diagram\space
command is already defined and will not be loaded again}\expandafter\endinput
\edef\cdrestoreat{
\noexpand\catcode`\noexpand\@=\the\catcode`\@
\noexpand\catcode`\noexpand\#=\the\catcode`\#
\noexpand\catcode`\noexpand\$=\the\catcode`\$
\noexpand\catcode`\noexpand\<=\the\catcode`\<
\noexpand\catcode`\noexpand\>=\the\catcode`\>
\noexpand\catcode`\noexpand\:=\the\catcode`\:
\noexpand\catcode`\noexpand\;=\the\catcode`\;
\noexpand\catcode`\noexpand\!=\the\catcode`\!
\noexpand\catcode`\noexpand\?=\the\catcode`\?
\noexpand\catcode`\noexpand\+=\the\catcode'53
}\catcode`\@=11 \catcode`\#=6 \catcode`\<=12 \catcode`\>=12 \catcode'53=12
\let\diagram@help@messages y\fi
\def\cdps@Rokicki#1{\special{ps:#1}}\let\cdps@dvips\cdps@Rokicki\let
\let\CD@HB\cdps@Rokicki\let\CD@IK\cdps@Rokicki
\let\CD@HB\cdps@Rokicki
\def\cdps@Bechtolsheim#1{\special{dvitps: Literal "#1"}}%
\let\cdps@dvitps\cdps@Bechtolsheim\let\cdps@IntegratedComputerSystems
\def\cdps@Clark#1{\special{dvitops: inline #1}}
\let\cdps@dvitops\cdps@Clark
\let\cdps@OzTeX\empty\let\cdps@oztex\empty\let\cdps@Trevorrow\empty
\def\cdps@Coombes#1{\special{ps-string #1}}
\def\CD@DE{\global\let}\def\CD@RH{\outer\def}
\xdef\CD@o{\string\{}\xdef\CD@yC{\string\}}
\xdef\CD@S{\string\&}
\xdef\CD@nC{\string\$}\gdef\CD@LG{$$}
\gdef\CD@uG{^^J}
\gdef\CD@uG{^^M}
\gdef\CD@uG{^^J}
\mathchardef\lessthan='30474 \mathchardef\greaterthan='30476
\font\tenln=line10\relax
\let\tenlnw\nullfont\else
\font\tenlnw=linew10\relax
\def\cd@shouldnt#1{\CD@KB{* THIS (#1) SHOULD NEVER HAPPEN! *}}
\def\get@round@pair#1(#2,#3){#1{#2}{#3}}
\def\get@square@arg#1[#2]{#1{#2}}
\def\CD@AE#1{\CD@PK\let\CD@DH\CD@@E\CD@@E#1,],}
\def\CD@m{[}\def\CD@RD{]}\def\commdiag#1{{\let\enddiagram\relax\diagram[]#1%
\enddiagram}}
\def\CD@BF{{\ifx\CD@EH[\aftergroup\get@square@arg\aftergroup\CD@YH\else
\aftergroup\CD@JH\fi}}
\def\CD@CF#1#2{\def\CD@YH{#1}\def\CD@JH{#2}\futurelet\CD@EH\CD@BF}
\def\CD@KK{|}
\def\CD@PB{
\tokcase\CD@DD:\CD@y\break@args;\catcase\@super:\upper@label;\catcase\CD@lJ:%
\lower@label;\tokcase{~}:\middle@label;
\tokcase<:\CD@iF;
\tokcase>:\CD@iI;
\tokcase(:\CD@BC;
\tokcase[:\optional@;
\tokcase.:\CD@JJ;
\catcase\space:\eat@space;\catcase\bgroup:\positional@;\default:\CD@@A
\break@args;\endswitch}
\def\switch@arg{
\catcase\@super:\upper@label;\catcase\CD@lJ:\lower@label;\tokcase[:\optional@
;
\tokcase.:\CD@JJ;
\catcase\space:\eat@space;\catcase\bgroup:\positional@;\tokcase{~}:%
\middle@label;
\default:\CD@y\break@args;\endswitch}
\let\CD@tJ\relax\ifx\protect\CD@qK\let\protect\relax\fi\ifx\AtEndDocument
\def\CD@PG{\CD@gB}\def\CD@GF#1#2{}\else\def\CD@PG#1{\edef\CD@CH{#1}%
\expandafter\CD@oC\CD@CH\CD@OD}\def\CD@oC#1\CD@OD{\AtEndDocument{\typeout{%
\CD@tA: #1}}}\def\CD@GF#1#2{\gdef#1{#2}\AtEndDocument{#1}}\fi\def\CD@ZA#1#2{%
\def#1{\CD@PG{#2\CD@mD\CD@W}\CD@DE#1\relax}}\def\CD@uF#1\repeat{\def\CD@p{#1}%
\CD@OF}\def\CD@OF{\CD@p\relax\expandafter\CD@OF\fi}\def\CD@sF#1\repeat{\def
\CD@q{#1}\CD@PF}\def\CD@PF{\CD@q\relax\expandafter\CD@PF\fi}\def\CD@tF#1%
\def\CD@QF{\CD@r\relax\expandafter\CD@QF\fi}\def
\def\CD@rG#1#2{\csname newtoks\endcsname#1#1=%
\expandafter{\csname#2\endcsname}}\else\csname newtoks\endcsname\no@cd@help
\def\CD@rG#1#2{\let#1\no@cd@help}\fi\chardef\CD@lF
\chardef\CD@lI=2 \chardef\CD@MH=5 \chardef\CD@tH=6 \chardef\CD@sH=7
\chardef\CD@PC=9 \dimendef\CD@hI=2 \dimendef\CD@hF=3 \dimendef\CD@mF=4
\def\sdef#1#2{\def#1{#2}%
}\def\CD@L#1{\expandafter\aftergroup\csname#1\endcsname}\def\CD@RC#1{%
\expandafter\def\csname#1\endcsname}\def\CD@sD#1{\expandafter\gdef\csname#1%
\endcsname}\def\CD@vC#1{\expandafter\edef\csname#1\endcsname}\def\CD@nF#1#2{%
\expandafter\let\csname#1\expandafter\endcsname\csname#2\endcsname}\def\CD@EE
\def\CD@AK#1{\csname#1\endcsname}\def\CD@XJ#1{\expandafter\show\csname#1%
\endcsname}\def\CD@ZJ#1{\expandafter\showthe\csname#1\endcsname}\def\CD@WJ#1{%
\expandafter\showbox\csname#1\endcsname}\def\CD@tA{Commutative Diagram}\edef
\edef\CD@dC{\string\diagram}\edef\CD@HD{\string\enddiagram
}\edef\CD@EC{\string\\}\def\CD@eF{LaTeX}\ifx\@ignoretrue\CD@qK\expandafter
\def\@ignoretrue{%
\global\ignore@true}\def\@ignorefalse{\global\ignore@false}\fi
\def\CD@g{{\ifnum0=`}\fi}\def\CD@wC{\ifnum0=`{\fi}}\def\catcase#1:{\ifcat
\noexpand\CD@EH#1\CD@tJ\expandafter\CD@kC\else\expandafter\CD@dJ\fi}\def
\def\CD@kC#1;#2\endswitch{#1}\def\CD@dJ#1;{}\let\endswitch\relax\def\default:%
\def\at@{@}\fi\edef\CD@P{\CD@o pt\CD@yC}%
\lTo\sp{#1}\sb{#2}\CD@z}\CD@RC{\CD@P)}#1)#2){\CD@z\rTo\sp{#1}\sb{#2}\CD@z}%
\def\CD@O{\def\endCD{\enddiagram}\CD@RC{\CD@P A}##1A##2A{\uTo<{##1}>{##2}%
\CD@z\CD@z}\CD@RC{\CD@P V}##1V##2V{\dTo<{##1}>{##2}\CD@z\CD@z}\CD@RC{\CD@P=}{%
\CD@z\hEq\CD@z}\CD@RC{\CD@P\CD@KK}{\vEq\CD@z\CD@z}\CD@RC{\CD@P\string\vert}{%
\vEq\CD@z\CD@z}\CD@RC{\CD@P.}{\CD@z\CD@z}\let\CD@z\CD@Q}\def\CD@IE{\let\tmp
\CD@JE\ifcat A\noexpand\CD@CH\else\ifcat=\noexpand\CD@CH\else\ifcat\relax
\noexpand\CD@CH\else\let\tmp\at@\fi\fi\fi\tmp}\def\CD@JE#1{\CD@nF{tmp}{\CD@P
\string#1}\ifx\tmp\relax\def\tmp{\at@#1}\fi\tmp}\def\CD@z{}\begingroup
\def\aftergroup\CD@T\aftergroup{\aftergroup\def\catcode`\@\active
\aftergroup @\endgroup{\futurelet\CD@CH\CD@IE}}\def\CD@uK#1{\CD@nF{x}{tex_#1:%
D}\ifx\x\relax\else\CD@nF{#1}{x}\fi} \def\CD@vK{\CD@uK{par}\CD@uK{everypar}%
\CD@uK{noindent}\CD@uK{parskip}\CD@uK{unskip}\CD@uK{hskip}\CD@uK{indent}}
\newdimen\CD@OA
\newdimen\CD@PA\CD@tG\CD@gE\CD@@A\CD@y\CD@tG\CD@hE\CD@EA\CD@BA\newdimen\CD@RA
\newdimen\CD@SA\newcount\CD@yA\newcount\CD@zA\newdimen\CD@QA\newbox\CD@DA
\def\CD@V#1#2{\ifdim#1<%
#2\relax#1=#2\relax\fi}\def\CD@X#1#2{\ifdim#1>#2\relax#1=#2\relax\fi}%
\newdimen\CD@XH\CD@XH=1sp \newdimen\CD@zC\CD@zC\z@\def\CD@cJ{\ifdim\CD@zC=1em%
\else\CD@nJ\fi}\def\CD@nJ{\CD@zC1em\def\CD@NC{\fontdimen8\textfont3 }\CD@@J
\CD@NJ\setbox0=\vbox{\CD@t\noindent\CD@k\null\penalty-9993\null\CD@ND\null
\endgraf\setbox0=\lastbox\unskip\unpenalty\setbox1=\lastbox\global\setbox
\CD@IG=\hbox{\unhbox0\unskip\unskip\unpenalty\setbox0=\lastbox}\global\setbox
\CD@KG=\hbox{\unhbox1\unskip\unpenalty\setbox1=\lastbox}}}\newdimen\CD@@I
\def\CD@zH#1{\multiply#1\tw@\advance#1\ifnum
#1<\z@-\else+\fi\CD@@I\divide#1\tw@\divide#1\CD@@I\multiply#1\CD@@I}\def
\newdimen\CD@LF\newdimen\CD@oI\def
\def\CD@RJ#1{\CD@zD\count@\CD@@I#1\ifnum
\count@>\z@\divide\CD@@I\count@\fi\CD@gI\CD@NJ}\def\CD@NJ{\dimen@\CD@QC
\count@\dimen@\divide\count@5\divide\count@\CD@@I\edef\CD@OC{\the\count@}}%
\def\CD@AJ{\CD@QJ\z@}\def\CD@QJ#1{\CD@tI\axisheight\advance\CD@tI#1\relax
\advance\CD@tI-.5\CD@oI\CD@zH\CD@tI\CD@sI-\CD@tI\advance\CD@tI\CD@LF}%
\newdimen\CD@DC\CD@DC\z@\newdimen\CD@eJ\CD@eJ\z@\def\CD@CJ#1{\CD@sI#1\relax
\CD@tI\CD@sI\advance\CD@tI\CD@LF\relax}\def\horizhtdp{height\CD@tI depth%
\CD@sI}\def\axisheight{\fontdimen22\the\textfont\tw@}\def\script@axisheight{%
\fontdimen22\the\scriptfont\tw@}\def\ss@axisheight{\fontdimen22\the
\scriptscriptfont\tw@}\def\CD@NC{0.4pt}\def\CD@VK{\fontdimen3\textfont\z@}%
\def\CD@UK{\fontdimen3\textfont\z@}\newdimen\PileSpacing\newdimen\CD@nA\CD@nA
\def\CD@RG{\ifincommdiag1.3em\else2em\fi}\newdimen\CD@YB\def\CellSize{%
\afterassignment\CD@kB\DiagramCellHeight}\newdimen\DiagramCellHeight
\newdimen\DiagramCellWidth\DiagramCellWidth-%
\def\CD@kB{\DiagramCellWidth\DiagramCellHeight}\def\CD@QC{3em}%
\newdimen\MapShortFall\def\MapsAbut{\MapShortFall\z@\objectheight\z@
\objectwidth\z@}\newdimen\CD@iA\CD@iA\z@\CD@tG\CD@vE\CD@aB\CD@ZB\expandafter
\fi\CD@nF{ifUglyObsoleteDiagrams}{relax}\newif\ifUglyObsoleteDiagrams
\def\CD@nK{\CD@aB\UglyObsoleteDiagramsfalse}\def\CD@oK{\CD@ZB
\UglyObsoleteDiagramstrue}\CD@vE\CD@nK\else\CD@oK\fi\CD@tG\CD@hK\CD@dK\CD@cK
\def\CD@sK{\ifx\pdfoutput\CD@qK\else\ifx\pdfoutput\relax\else\ifnum
\pdfoutput>\z@\CD@pK\fi\fi\fi} \def\CD@pK{\global\CD@dK\global\CD@aB\global
\UglyObsoleteDiagramsfalse\global\let\CD@n\empty\global\let\CD@oK\relax
\global\let\CD@pK\relax\global\let\CD@sK\relax}\def\CD@tK#1{}\ifx\pdfliteral\CD@qK\else\ifx\pdfliteral\relax\else\let\CD@tK
\def\newarrowhead{\CD@mG h\CD@BG\CD@GG>}\def\newarrowtail{\CD@mG t%
\CD@BG\CD@GG>}\def\newarrowmiddle{\CD@mG m\CD@BG\hbox@maths\empty}\def
\def\CD@mG#1#2#3#4#5#6#7#8#9{\CD@RC{r#1%
:#5}{#2{#6}}\CD@RC{l#1:#5}{#2{#7}}\CD@RC{d#1:#5}{#3{#8}}\CD@RC{u#1:#5}{#3{#9}%
}\CD@vC{-#1:#5}{\expandafter\noexpand\csname-#1:#4\endcsname\noexpand\CD@MC}%
\CD@vC{+#1:#5}{\expandafter\noexpand\csname+#1:#4\endcsname\noexpand\CD@MC}}%
\CD@ZA\CD@MC{\CD@eF\space diagonals are used unless PostScript is set}\def
\def\CD@@J{\CD@IJ\CD@sJ<>ht\CD@IJ
\CD@sJ<>th}\def\CD@IJ#1#2#3#4#5{\CD@HJ{r#4}{#3}{l#5}{#2}{r#4:#1}\CD@HJ{r#5}{#%
2}{l#4}{#3}{l#4:#1}\CD@HJ{d#4}{#3}{u#5}{#2}{d#4:#1}\CD@HJ{d#5}{#2}{u#4}{#3}{u%
#4:#1}}\def\CD@HJ#1#2#3#4#5{\begingroup\aftergroup\CD@GJ\CD@L{#1+:#2}\CD@L{#1%
:#2}\CD@L{#3:#4}\CD@L{#5}\endgroup}\def\CD@GJ#1#2#3#4{\csname newbox%
\endcsname#1\def#2{\copy#1}\def#3{\copy#1}\setbox#1=\box\voidb@x}\def\CD@sJ{}%
\def\CD@GJ#1#2#3#4{\setbox#1=#4}\ifx\tenln\nullfont\def\CD@sJ{vee}\else
\let\CD@sJ\CD@eF\fi\def\CD@xF#1#2#3{\begingroup\aftergroup\CD@wF\CD@L{#1#2:#3%
#3}\CD@L{#1#2:#3}\aftergroup\CD@yF\CD@L{#1#2:#3-#3}\CD@L{#1#2:#3}\endgroup}%
\def\CD@wF#1#2{\def#1{\hbox{\rlap{#2}\kern.4\CD@zC#2}}}\def\CD@yF#1#2{\def#1{%
\hbox{\rlap{#2}\kern.4\CD@zC#2\kern-.4\CD@zC}}}\CD@xF lh>\CD@xF rt>\CD@xF rh<%
\def\CD@yF#1#2{\def#1{\hbox{\kern-.4\CD@zC\rlap{#2}\kern.4\CD@zC#2}%
}}\CD@xF rh>\CD@xF lh<\CD@xF lt>\CD@xF lt<\def\CD@wF#1#2{\def#1{\vbox{\vbox to%
\z@{#2\vss}\nointerlineskip\kern.4\CD@zC#2}}}\def\CD@yF#1#2{\def#1{\vbox{%
\vbox to\z@{#2\vss}\nointerlineskip\kern.4\CD@zC#2\kern-.4\CD@zC}}}\CD@xF uh>%
\def\CD@yF#1#2{\def#1{\vbox{\kern-.4\CD@zC\vbox
to\z@{#2\vss}\nointerlineskip\kern.4\CD@zC#2}}}\CD@xF dh>\CD@xF ut>\CD@xF uh<%
\def\CD@BG#1{\hbox{\mathsurround\z@\offinterlineskip\CD@k\mkern-1.5%
mu{#1}\mkern-1.5mu\CD@ND}}\def\hbox@maths#1{\hbox{\CD@k#1\CD@ND}}\def\CD@GG#1%
\CD@oI\CD@zH{\dimen0}\kern-\dimen0%
\def\CD@sB#1{\hbox to2\CD@LF{\hss\offinterlineskip\mathsurround
\z@\CD@k{#1}\CD@ND\hss}}\def\CD@vF#1{\hbox{\mathsurround\z@\CD@k{#1}\CD@ND}}%
\def\CD@bE#1{\hbox{\kern-.15\CD@zC\CD@k{#1}\CD@ND\kern-.15\CD@zC}}\def\CD@MK#%
\def\@fillh{%
\xleaders\vrule\horizhtdp}\def\@fillv{\xleaders\hrule width\CD@LF}\CD@nF{rf:-%
\def
\def\CD@BD{\CD@U\null\CD@@D\null\CD@@D\null}\edef\CD@lG{\string\newarrow}\def
\edef\CD@QG{#4}\edef\CD@jD{#5}\edef\CD@LE{#6}\let\CD@HE\CD@sG\let\CD@FK
\let\@x\CD@AH\ifx\CD@oJ\CD@iD\let\CD@oJ\empty\fi\ifx\CD@LE\CD@jD\let
\def\CD@LI{r}\def\CD@SF{l}\def\CD@IC{d}\def\CD@yJ{u}\def\CD@gH
\def\@m{-}\ifx\CD@iD\CD@jD\ifx\CD@QG\CD@iD\let\CD@QG\empty\fi\ifx\CD@LE
\let\@x\CD@yG\else\let\@x\CD@zG\fi\fi\else\edef\CD@a{%
\CD@iD\CD@oJ}\ifx\CD@a\empty\ifx\CD@QG\CD@jD\let\CD@QG\empty\fi\fi\fi\ifmmode
\CD@b\CD@L{r\@name}\fi\fi\endgroup}\def
\def\CD@BH{\CD@vG\CD@IC\CD@yJ du%
\Vertical@Map}\def\CD@AH{\CD@vG\CD@gH\@m+-\Vector@Map}\def\CD@yG{\CD@vG\CD@gH
\@m+-\Slant@Map}\def\CD@zG{\CD@vG\CD@gH\@m+-\Slope@Map}\catcode`\/=\active
\def\CD@vG#1#2#3#4#5{\CD@jG#1#3#5t:\CD@oJ/f:\CD@iD/m:\CD@QG/f:\CD@jD/h:\CD@LE
//\CD@jG#2#4#5h:\CD@LE/f:\CD@jD/m:\CD@QG/f:\CD@iD/t:\CD@oJ//}\def\CD@jG#1#2#3%
\def\CD@M#1/{\edef\CD@EH{#1}\ifx\CD@EH\empty\else\CD@L{%
\CD@fG#1}\expandafter\CD@M\fi}\catcode`\/=12 \def\CD@nG#1#2#3#4#5#6#7#8{%
\aftergroup\sdef\CD@L{#6\@name}\aftergroup{\CD@L{#2\@name}\if#2#4\aftergroup
\CD@CI\else\aftergroup\CD@BI\fi\CD@L{#1\@name}%
\aftergroup(\aftergroup#3\aftergroup,\aftergroup#5\aftergroup)\aftergroup}}%
\def\CD@oB#1#2#3#4{\expandafter\ifx\csname#1#2:#4\endcsname\relax\CD@y\CD@gB{%
arrow#3 "#4" undefined}\fi}\CD@rG\CD@VE{All five components must be defined
before an arrow.}\CD@rG\CD@SE{\CD@lG, unlike \string\HorizontalMap, is a
declaration.}\def\CD@b#1{\CD@YA{Arrows \string#1 etc could not be defined}%
\CD@VE}\def\CD@kG{\CD@YA{misplaced \CD@lG}\CD@SE}\def\newdiagramgrid#1#2#3{%
\CD@RC{cdgh@#1}{#2,],}
\CD@RC{cdgv@#1}{#3,],}}
\def\CD@yH{\CD@VA6 }\def\CD@OB{\CD@VA1 \global\CD@yA1
\CD@DE\CD@YF\empty}\def\CD@YF{}\def\CD@nB#1{\relax\CD@MD\edef\CD@vJ{#1}%
\begingroup\CD@rE\else\ifcase\CD@VA\ifmmode\else\CD@YG\CD@E0\fi\or\CD@cE5\or
\CD@YG\CD@F5\or\CD@YG\CD@B5\or\CD@YG\CD@B5\or\CD@YG\CD@C5\or\CD@cE7\or\CD@YG
\CD@D7\fi\fi\endgroup\xdef\CD@YF{#1}}\def\CD@pB#1#2#3#4#5{\relax\CD@MD\xdef
\CD@vJ{#4}\begingroup\ifnum\CD@VA<#1 \expandafter\CD@cE\ifcase\CD@VA0\or#2\or
#3\else#2\fi\else\ifnum\CD@VA<6 \CD@tJ\CD@YG\CD@B#2\else\CD@YG\CD@G#2\fi\fi
\endgroup\CD@DE\CD@YF\CD@vJ\ifincommdiag\let\CD@ZD#5\else\let\CD@ZD\CD@LK\fi}%
\def\CD@yI{\global\CD@yA=\ifnum\CD@VA<5 1\else2\fi\relax}\def\CD@OI{\CD@VA
\CD@yA}\def\CD@cE#1{\aftergroup\CD@VA\aftergroup#1\aftergroup\relax}\def
\let\CD@yI\relax\let\CD@OI\relax}\def\CD@FH#1#2#3#4#5{\ifincommdiag\let\CD@ZD
#5\else\xdef\CD@vJ{#4}\let\CD@ZD\CD@LK\fi}\def\CD@YG#1{\aftergroup#1%
\aftergroup\relax\CD@cE}\def\CD@B{\CD@YE\CD@S\CD@ME\CD@Q}\def\CD@G{\CD@YE{%
\CD@yC\CD@S}\CD@XE\CD@QD\CD@Q}\def\CD@F{\CD@YE{*\CD@S}\CD@RE\clubsuit\CD@Q}%
\def\CD@C{\CD@YE{\CD@S*\CD@S}\CD@RE\CD@Q\clubsuit\CD@Q}\def\CD@D{\CD@YE\CD@EC
\CD@TE\\}\def\CD@E{\CD@YE\CD@nC\CD@QE\CD@k}\def\CD@LK{\CD@YA{\CD@vJ\space
ignored \CD@dH}\CD@WE}\def\CD@FE{}\def\CD@d{\CD@YA{maps must never be enclosed
in braces}\CD@OE}\def\CD@dH{outside diagram}\def\CD@FC{\string\HonV, \string
\VonH\space and \string\HmeetV}\CD@rG\CD@ME{The way that horizontal and
vertical arrows are terminated implicitly means\CD@uG that they cannot be
mixed with each other or with \CD@FC.}\CD@rG\CD@XE{\string\pile\space is for
parallel horizontal arrows; verticals can just be put together in\CD@uG a cell%
. \CD@FC\space are not meaningful in a \string\pile.}\CD@rG\CD@RE{The
horizontal maps must point to an object, not each other (I've put in\CD@uG one
which you're unlikely to want). Use \string\pile\space if you want them
parallel.}\CD@rG\CD@TE{Parallel horizontal arrows must be in separate layers
of a \string\pile.}\CD@rG\CD@QE{Horizontal arrows may be used \CD@dH s, but
must still be in maths.}\CD@rG\CD@WE{Vertical arrows, \CD@FC\space\CD@dH s don%
't know where\CD@uG where to terminate.}\CD@rG\CD@OE{This prevents them from
stretching correctly.}\def\CD@YE#1{\CD@YA{"#1" inserted \ifx\CD@YF\empty
before \CD@vJ\else between \CD@YF\ifx\CD@YF\CD@vJ s\else\space and \CD@vJ\fi
\fi}}\count@=\year\multiply\count@12 \advance\count@\month\ifnum\count@>24391
\def
\def\CD@TJ{\CD@GB-%
9999 \let\CD@ZD\CD@XD\ifincommdiag\else\CD@cJ\ifinpile\else\skip2\z@ plus 1.5%
\CD@VK minus .5\CD@UK\skip4\skip2 \fi\fi\let\CD@kD\@fillh\CD@nF{fill@dot}{rf:%
.}}\def\Vector@Map{\CD@HK4}\def\Slant@Map{\CD@HK{\CD@EF255\else6\fi}}\def
\def\CD@HK#1#2#3#4#5#6{\CD@LC\def\CD@WK{2}\def\CD@aK{%
2}\def\CD@ZK{1}\def\CD@bK{1}\let\Horizontal@Map\CD@nI\def\CD@OG{#1}\def\CD@NI
{\CD@U#2#3#4#5#6}}\def\CD@nI{\CD@TJ\CD@JB\let\CD@ZD\CD@TD\CD@qD}\CD@tG\CD@pE
\def\cds@missives{\CD@rA}\def\CD@TD{\CD@vE\let\CD@OG\CD@OC
\CD@x\CD@zE\CD@WF\fi\setbox0\hbox{\incommdiagfalse\CD@HI}\CD@pE\CD@aD\else
\global\CD@YC\CD@bD\fi\ifvoid6 \ifvoid7 \CD@eE\fi\fi\CD@zE\else\CD@BD\global
\CD@YC\let\CD@CG\CD@IH\CD@YD\fi\else\CD@NI\CD@MI\global\CD@YC\CD@YD\fi}\def
\def\CD@U#1#2#3#4#5{\let\CD@oJ#1\let\CD@iD#2\let\CD@QG#3%
\let\CD@jD#4\let\CD@LE#5\CD@TB\ifx\CD@iD\CD@jD\CD@UB\fi}\def\CD@qD#1#2#3#4#5{%
\CD@U#1#2#3#4#5\CD@tD}\def\Vertical@Map{\CD@pB433{vertical map}\CD@cD\CD@LC
\CD@GB-9995 \let\CD@kD\@fillv\CD@nF{fill@dot}{df:.}\CD@qD}\def\break@args{%
\def\CD@tD{\CD@ZD}\CD@ZD\endgroup\aftergroup\CD@FE}\def\CD@MJ{\setbox1=\CD@oJ
\setbox5=\CD@LE\ifvoid3 \ifx\CD@QG\null\else\setbox3=\CD@QG\fi\fi\CD@@G2%
\CD@iD\CD@@G4\CD@jD}\def\CD@pF#1{\ifvoid1\else\CD@oF1#1\fi\ifvoid2\else\CD@oF
2#1\fi\ifvoid3\else\CD@oF3#1\fi\ifvoid4\else\CD@oF4#1\fi\ifvoid5\else\CD@oF5#%
1\fi} \def\CD@oF#1#2{\setbox#1\vbox{\offinterlineskip\box#1\dimen@\prevdepth
\advance\dimen@-#2\relax\setbox0\null\dp0\dimen@\ht0-\dimen@\box0}}\def\CD@@G
\CD@ZA\CD@BK{\string\HorizontalMap, \string\VerticalMap\space and
\string\DiagonalMap\CD@uG are obsolete - use \CD@lG\space to pre-define maps}%
\def\HorizontalMap#1#2#3#4#5{\CD@BK\CD@nB{old horizontal map}\CD@LC\CD@TJ\def
\CD@oJ{\CD@UH{#1}}\CD@SH\CD@iD{#2}\def\CD@QG{\CD@UH{#3}}\CD@SH\CD@jD{#4}\def
\CD@LE{\CD@UH{#5}}\CD@tD}\def\VerticalMap#1#2#3#4#5{\CD@BK\CD@pB433{vertical
map}\CD@cD\CD@LC\CD@GB-9995 \let\CD@kD\@fillv\def\CD@oJ{\CD@GG{#1}}\CD@VH
\CD@iD{#2}\def\CD@QG{\CD@GG{#3}}\CD@VH\CD@jD{#4}\def\CD@LE{\CD@GG{#5}}\CD@tD}%
\def\DiagonalMap#1#2#3#4#5{\CD@BK\CD@LC\def\CD@OG{4}\let\CD@kD\CD@qK\let
\CD@ZD\CD@YD\def\CD@WK{2}\def\CD@aK{2}\def\CD@ZK{1}\def\CD@bK{1}\def\CD@QG{%
\CD@vF{#3}}\ifPositiveGradient\let\mv\raise\def\CD@oJ{\CD@vF{#5}}\def\CD@iD{%
\CD@vF{#4}}\def\CD@jD{\CD@vF{#2}}\def\CD@LE{\CD@vF{#1}}\else\let\mv\lower\def
\CD@oJ{\CD@vF{#1}}\def\CD@iD{\CD@vF{#2}}\def\CD@jD{\CD@vF{#4}}\def\CD@LE{%
\CD@vF{#5}}\fi\CD@tD}\def\CD@aE{-}\def\CD@AD{\empty}\def\CD@SH{\CD@EG\CD@bE
\CD@aE\@fillh}\def\CD@VH{\CD@EG\CD@MK\CD@KK\@fillv}\def\CD@EG#1#2#3#4#5{\def
\CD@CH{#5}\ifx\CD@CH#2\let#4#3\else\let#4\null\ifx\CD@CH\empty\else\ifx\CD@CH
\CD@AD\else\let#4\CD@CH\fi\fi\fi}\def\CD@UH#1{\hbox{\mathsurround\z@
\offinterlineskip\def\CD@CH{#1}\ifx\CD@CH\empty\else\ifx\CD@CH\CD@AD\else
\CD@k\mkern-1.5mu{\CD@CH}\mkern-1.5mu\CD@ND\fi\fi}}\def\CD@yD#1#2{\setbox#1=%
\hbox\bgroup\setbox0=\hbox{\CD@k\labelstyle()\CD@ND}
\setbox1=\null\ht1\ht0\dp1\dp0\box1 \kern.1\CD@zC\CD@k\bgroup\labelstyle
\aftergroup\CD@LD\CD@xD}\def\CD@LD{\CD@ND\kern.1\CD@zC\egroup\CD@tD}\def
\def\CD@mJ{
\catcase\bgroup:\CD@v;\catcase\egroup:\missing@label;\catcase\space:\CD@TF;%
\tokcase[:\CD@XF;
\default:\CD@zJ;\endswitch}\def\CD@v{\let\CD@MD\CD@c\let\CD@CH}\def\CD@zJ#1{%
\let\CD@UF\egroup{\let\actually@braces@missing@around@macro@in@label\CD@ZH
\let\CD@MD\CD@xC\let\CD@UF\CD@VF#1%
\actually@braces@missing@around@macro@in@label}\CD@UF}\def
\def\missing@label
\egroup\CD@YA{missing label}\CD@PE}\def\CD@xC{\egroup\missing@label}\outer
\def\CD@ZH{}\def\CD@UF{}\def\CD@VF{\CD@wC\CD@UF}\def\CD@MD{}\def\CD@XF{\let
\CD@N\CD@xD\get@square@arg\CD@AE}\CD@rG\CD@PE{The text which has just been
read is not allowed within map labels.}\def\CD@c{\egroup\CD@YA{missing \CD@yC
\space inserted after label}\CD@PE}\def\upper@label{\CD@oD\CD@yD6}\def
\def\middle@label{%
\CD@yD3}\CD@tG\CD@yE\CD@pD\CD@oD\def\CD@iF{\ifPositiveGradient\CD@tJ
\expandafter\upper@label\else\expandafter\lower@label\fi}\def\CD@iI{%
\ifPositiveGradient\CD@tJ\expandafter\lower@label\else\expandafter
\upper@label\fi}\def\positional@{\CD@gB{labels as positional arguments are
obsolete}\CD@yE\CD@tJ\expandafter\upper@label\else\expandafter\lower@label\fi
-}\def\CD@tD{\futurelet\CD@EH\switch@arg}\def\eat@space{\afterassignment
\CD@tD\let\CD@EH= }\def\CD@TF{\afterassignment\CD@xD\let\CD@EH= }\def\CD@BC{%
\get@round@pair\CD@uD}\def\CD@uD#1#2{\def\CD@WK{#1}\def\CD@aK{#2}\CD@tD}\def
\def\CD@JJ.{\CD@sC\CD@tD}\def
\def\CD@MI{}\def\CD@@E#1,{\CD@nH#1,\begingroup\ifx\@name\CD@RD
\CD@FF\aftergroup\CD@e\fi\aftergroup\CD@jC\else\expandafter\def\expandafter
\CD@RF\expandafter{\csname\@name\endcsname}\expandafter\CD@vD\CD@RF\CD@KD\ifx
\CD@RF\empty\aftergroup\CD@pC\expandafter\aftergroup\csname\CD@FB\@name
\endcsname\expandafter\aftergroup\csname\CD@FB @\@name\endcsname\else\gdef
\CD@GE{#1}\CD@gB{\string\relax\space inserted before `[\CD@GE'}\message{(I was
trying to read this as a \CD@tA\ option.)}\aftergroup\CD@H\fi\fi\endgroup}%
\def\CD@vD#1#2\CD@KD{\def\CD@RF{#2}}\def\CD@jC{\let\CD@CH\CD@N\let\CD@N\relax
\CD@CH}\def\CD@H#1],{
\CD@jC\relax\def\CD@RF{#1}\ifx\CD@RF\empty\def\CD@RF{[\CD@GE]}%
\else\def\CD@RF{[\CD@GE,#1]}
\fi\CD@RF}\def\CD@pC#1#2{\ifx#2\CD@qK\ifx#1\CD@qK\CD@gB{option `\@name'
undefined}\else#1\fi\else\CD@FF\expandafter#2\CD@GK\CD@PK\else\CD@QK\fi\fi
\CD@DH}\CD@tG\CD@FF\CD@QK\CD@PK\def\CD@nH#1,{\CD@FF\ifx\CD@GK\CD@qK\CD@e\else
\expandafter\CD@oH\CD@GK,#1,(,),(,)[]%
\fi\fi\CD@FF\else\CD@mH#1==,\fi}\def\CD@e{\CD@gB{option `\@name' needs (x,y)
value}\CD@PK\let\@name\empty}\def\CD@mH#1=#2=#3,{\def\@name{#1}\def\CD@GK{#2}%
\def\CD@RF{#3}\ifx\CD@RF\empty\let\CD@GK\CD@qK\fi}%
\def\CD@oH#1(#2,#3)#4,(#5,#6)#7[]{\def\CD@GK{{#2}{#3}}\def\CD@RF{#1#4#5#6}%
\ifx\CD@RF\empty\def\CD@RF{#7}\ifx\CD@RF\empty\CD@e\fi\else\CD@e\fi}\def
\let\CD@N\relax\def\CD@zD#1{\ifx\CD@GK\CD@qK\CD@gB{option `\@name
' needs a value}\else#1\CD@GK\relax\fi}\def\CD@BE#1#2{\ifx\CD@GK\CD@qK#1#2%
\relax\else#1\CD@GK\relax\fi}\def\cds@@showpair#1#2{\message{x=#1,y=#2}}\def
\def\CD@DI#1{\def\CD@CH
{#1}\CD@nF{@x}{cdps@#1}\ifx\CD@CH\empty\CD@f\CD@CH{cannot be used}\else\ifx
\CD@CH\relax\CD@f\CD@CH{unknown}\else\let\CD@IK\@x\fi\fi}\def\CD@f#1#2{\CD@gB
{PostScript translator `#1' #2}}\def\CD@PH{}\def\CD@PJ{\CD@fA\edef\CD@PH{%
\noexpand\CD@KB{\@name\space ignored within maths}}}\def\diagramstyle{\CD@cJ
\let\CD@N\relax\CD@CF\CD@AE\CD@AE}\CD@tG\CD@sE
\CD@hG\CD@RC{cds@ }{}\CD@RC{cds@}{}\CD@RC
\def\cds@abut{\MapsAbut\dimen1\z@
\dimen5\z@}\def\cds@alignlabels{\CD@IA\CD@KA}\def\cds@amstex{\ifincommdiag
\CD@O\else\def\CD{\diagram[amstex]}
\fi\CD@T\catcode`\@\active}\def\cds@b{\let\CD@dB\CD@bB}\def\cds@balance{\let
\CD@hA\CD@AA}\let\cds@bottom\cds@b\def\cds@center{\cds@vcentre\cds@nobalance}%
\let\cds@centre\cds@center\def\cds@centerdisplay{\CD@HA\CD@PJ\cds@balance}%
\let\cds@centredisplay\cds@centerdisplay\def\cds@crab{\CD@BE\CD@DC{.5%
\PileSpacing}}\CD@RC{cds@crab-}{\CD@DC-.5\PileSpacing}\CD@RC{cds@crab+}{%
\def\cds@defaultsize{\CD@BE{\let\CD@QC}{3em}\CD@NJ
}\def\cds@displayoneliner{\CD@DB}\let\cds@dotted\CD@sC\def\cds@dpi{\CD@RJ{1%
truein}}\def\cds@dpm{\CD@RJ{100truecm}}\let\CD@XA\CD@qK\def\cds@eqno{\let
\CD@XA\CD@GK\let\CD@EJ\empty}\def\cds@fixed{\CD@qA}\CD@tG\CD@fE\CD@J\CD@I\def
\def\cds@gap
\CD@sI\CD@BE{\wd3=}\MapShortFall} \def
\relax\CD@gB{%
unknown grid `\CD@GK'}\else\CD@WB\fi\fi}\let\h@grid\relax\let\v@grid\relax
\def\cds@gridx{\ifx\CD@GK\CD@qK\else\cds@grid\fi\let\CD@CH\h@grid\let\h@grid
\v@grid\let\v@grid\CD@CH}\def\cds@h{\CD@zD\DiagramCellHeight}\def\cds@hcenter
\let\CD@hA\CD@aA}\let\cds@hcentre\cds@hcenter\def\cds@heads{\CD@BE{\let
\CD@sJ}\CD@sJ\CD@@J\CD@vE\else\ifx\CD@sJ\CD@eF\else\CD@MC\fi\fi}\let
\let\cds@hmiddle\cds@balance\def\cds@htriangleheight{\CD@BE
\DiagramCellHeight\DiagramCellHeight\DiagramCellWidth1.73205%
\DiagramCellHeight}\def\cds@htrianglewidth{\CD@BE\DiagramCellWidth
\DiagramCellWidth\DiagramCellHeight.57735\DiagramCellWidth}\CD@tG\CD@zE\CD@eE
\def\cds@hug{\CD@eE} \def\cds@inline{\CD@gA\let\CD@PH\empty}\def
\def\cds@labelstyle{\CD@zD{\let\labelstyle}}\def\cds@landscape{\CD@kA}\def
\let\CD@EJ\empty\def\CD@FJ{\refstepcounter{%
equation}\def\CD@XA{\hbox{\@eqnnum}}}\def\cds@LaTeXeqno{\let\CD@EJ\CD@FJ}\def
\def\cds@leftflush{\cds@flushleft\CD@J}\def
\def\cds@lowershortfall{%
\ifPositiveGradient\cds@leftshortfall\else\cds@rightshortfall\fi}\def
\def\cds@midhshaft{\CD@JA}\def\cds@midshaft{\CD@JA}\def
\def\cds@moreoptions{\CD@@A}\let\cds@nobalance
\def\cds@nohcheck{\CD@HH}\def\cds@nohug{\CD@dE} \def
\let\cds@noorigin\cds@nobalance\def
\def\cds@UO{\CD@oK\global\let\CD@n\empty}%
\def\cds@UglyObsolete{\cds@UO\let\cds@PS\empty}\def\CD@rK#1{\CD@gB{option `#1%
' renamed as `UglyObsolete'}}\def\cds@noPostScript{\CD@rK{noPostScript}}\def
\def\cds@notextflow{\CD@RB}\def\cds@noTPIC{%
\CD@CK}\def\cds@objectstyle{\CD@zD{\let\objectstyle}}\def\cds@origin{\let
\CD@hA\CD@iB}\def\cds@p{\CD@zD\PileSpacing}\let\cds@pilespacing\cds@p\def
\def\cds@portrait{\CD@jA}\def
\def\cds@PS{%
\CD@nK\global\let\CD@n\empty}\CD@GF\CD@n{\typeout{\CD@tA: try the PostScript
option for better results}}\def\cds@repositionpullbacks{\let\make@pbk\CD@fH
\let\CD@qH\CD@pH}\def\cds@righteqno{\CD@oA}\def\cds@rightshortfall{\CD@zD{%
\dimen5 }}\def\cds@ruleaxis{\CD@zD{\let\axisheight}}\def\cds@cmex{\let\CD@GG
\CD@sB\let\CD@QJ\CD@CJ}\def\cds@s{\cds@height\DiagramCellWidth
\DiagramCellHeight}\def\cds@scriptlabels{\let\labelstyle\scriptstyle}\def
\def\cds@showfirstpass{\CD@BE{\let\CD@nD}\z@}\def\cds@silent{\def\CD@KB##1{}%
\def\CD@gB##1{}}\let\cds@size\cds@s\def\cds@small{\CellSize2\CD@zC}\def
\def\cds@t{\let\CD@dB\CD@fB}\def\cds@textflow{%
\CD@SB\CD@PJ}\def\cds@thick{\let\CD@rF\tenlnw\CD@LF\CD@NC\CD@BE\MapBreadth{2%
\CD@LF}\CD@@J}\def\cds@thin{\let\CD@rF\tenln\CD@BE\MapBreadth{\CD@NC}\CD@@J}%
\def\cds@tight{\CD@WB}\let\cds@top\cds@t\def\cds@TPIC{\CD@DK}\def
\def\cds@vcenter{\let\CD@dB\CD@cB}\let\cds@vcentre
\def\cds@vtriangleheight{\CD@BE\DiagramCellHeight
\DiagramCellHeight\DiagramCellWidth.577035\DiagramCellHeight}\def
\def\cds@vmiddle{\let\CD@dB\CD@eB}%
\def\cds@w{\CD@zD\DiagramCellWidth}\let\cds@width\cds@w\def\diagram{\relax
\protect\CD@bC}\def\enddiagram{\protect\CD@SG}\def\CD@bC{\CD@g\CD@uI
\incommdiagtrue\edef\CD@wI{\the\CD@NB}\global\CD@NB\z@\boxmaxdepth\maxdimen
\everycr{}\CD@sK\everymath{}\everyhbox{}\ifx\pdfsyncstop\CD@qK\else
\pdfsyncstop\fi\CD@aC}\def\CD@aC{\CD@y\let\CD@N\CD@ZC\CD@CF\CD@AE\CD@WD}\def
\def\CD@WD{\let
\CD@EH\relax\CD@nE\CD@vE\else\CD@hK\else\CD@KB{landscape ignored without
PostScript}\CD@jA\fi\fi\fi\CD@EJ\setbox2=\vbox\bgroup\CD@JF\CD@VD}\def\CD@cH{%
\CD@nE\CD@fB\else\CD@dB\fi\CD@hA\nointerlineskip\setbox0=\null\ht0-\CD@pI\dp0%
\CD@pI\wd0\CD@kI\box0 \global\CD@QA\CD@kF\global\CD@yA\CD@XB\ifx\CD@NK\CD@qK
\global\CD@RA\CD@kF\else\global\CD@RA\CD@NK\fi\egroup\CD@zF\CD@nE\setbox2=%
\hbox to\dp2{\vrule height\wd2 depth\CD@QA width\z@\global\CD@QA\ht2\ht2\z@
\dp2\z@\wd2\z@\CD@hK\CD@tK{q 0 1 -1 0 0 0 cm}\else\global\CD@iG\CD@IK{0 1
bturn}\fi\box2\CD@gK\hss}\CD@DB\fi\ifnum\CD@yA=1 \else\CD@DB\fi\global
\@ignorefalse\CD@mE\leavevmode\fi\ifvmode\CD@TA\else\ifmmode\CD@PH\CD@GI\else
\CD@qE\CD@gA\fi\ifinner\CD@gA\fi\CD@mE\CD@GI\else\CD@sE\CD@QB\else\CD@TA\fi
\fi\fi\fi\CD@dD}\def\CD@dD{\global\CD@NB\CD@wI\relax\CD@xE\global\CD@ID\else
\aftergroup\CD@mC\fi\if@ignore\aftergroup\ignorespaces\fi\CD@wC\ignorespaces}%
\def\CD@fB{\advance\CD@pI\dimen1\relax}\def\CD@eB{\advance\CD@pI.5\dimen1%
\relax}\def\CD@bB{}\def\CD@cB{\CD@fB\advance\CD@pI\CD@YB\divide\CD@pI2
\advance\CD@pI-\axisheight\relax}\def\CD@aA{}\def\CD@iB{\CD@kF\z@}\def\CD@AA{%
\ifdim\dimen2>\CD@kF\CD@kF\dimen2 \else\dimen2\CD@kF\CD@kI\dimen0 \advance
\CD@kI\dimen2 \fi}\def\CD@QB{\skip0\z@\relax\loop\skip1\lastskip\ifdim\skip1>%
\z@\unskip\advance\skip0\skip1 \repeat\vadjust{\prevdepth\dp\strutbox\penalty
\predisplaypenalty\vskip\abovedisplayskip\CD@UA\penalty\postdisplaypenalty
\vskip\belowdisplayskip}\ifdim\skip0=\z@\else\hskip\skip0 \global\@ignoretrue
\fi}\def\CD@TA{\CD@LG\kern-\displayindent\CD@UA\CD@LG\global\@ignoretrue}\def
\z@\CD@KB{wider than the page by \the
\dimen0 }\CD@HA\fi\CD@iE\hss\else\CD@V\CD@QA\CD@nA\fi\CD@GI\hss\kern-\wd1\box
\def\CD@GI{\CD@AF\CD@@F\else\CD@SC\global\CD@hG\fi\fi\kern\CD@QA\box2 }%
\def\CD@JF{\CD@cJ\ifdim\DiagramCellHeight=-\maxdimen
\DiagramCellHeight\CD@QC\fi\ifdim\DiagramCellWidth=-\maxdimen
\DiagramCellWidth\CD@QC\fi\global\CD@XC\CD@IF\let\CD@FE\empty\let\CD@z\CD@Q
\let\overprint\CD@eH\let\CD@s\CD@rJ\let\enddiagram\CD@ED\let\\\CD@cC\let\par
\CD@jH\let\CD@MD\empty\let\switch@arg\CD@PB\let\shift\CD@iA\baselineskip
\DiagramCellHeight\lineskip\z@\lineskiplimit\z@\mathsurround\z@\tabskip\z@
\CD@OB}\def\CD@VD{\penalty-123 \begingroup\CD@jA\aftergroup\CD@K\halign
\bgroup\global\advance\CD@NB1 \vadjust{\penalty1}\global\CD@FA\z@\CD@OB\CD@j#%
#\CD@DD\CD@Q\CD@Q\CD@OI\CD@j##\CD@DD\cr}\def\CD@ED{\CD@MD\CD@GD\crcr\egroup
\global\CD@JD\endgroup}\def\CD@j{\global\advance\CD@FA1 \futurelet\CD@EH\CD@i
}\def\CD@i{\ifx\CD@EH\CD@DD\CD@tJ\hskip1sp plus 1fil \relax\let\CD@DD\relax
\CD@vI\else\hfil\CD@k\objectstyle\let\CD@FE\CD@d\fi}\def\CD@DD{\CD@MD\relax
\CD@yI\CD@vI\global\CD@QA\CD@iA\penalty-9993 \CD@ND\hfil\null\kern-2\CD@QA
\null}\def\CD@cC{\cr}\def\across#1{\span\omit\mscount=#1 \global\advance
\CD@FA\mscount\global\advance\CD@FA\m@ne\CD@sF\ifnum\mscount>2 \CD@fJ\repeat
\ignorespaces}\def\CD@fJ{\relax\span\omit\advance\mscount\m@ne}\def\CD@qJ{%
\ifincommdiag\ifx\CD@iD\@fillh\ifx\CD@jD\@fillh\ifdim\dimen3>\z@\else\ifdim
\dimen2>93\CD@@I\ifdim\dimen2>18\p@\ifdim\CD@LF>\z@\count@\CD@bJ\advance
\count@\m@ne\ifnum\count@<\z@\count@20\let\CD@aJ\CD@uJ\fi\xdef\CD@bJ{\the
\count@}\fi\fi\fi\fi\fi\fi\fi}\def\CD@cG#1{\vrule\horizhtdp width#1\dimen@
\kern2\dimen@}\def\CD@uJ{\rlap{\dimen@\CD@@I\CD@V\dimen@{.182\p@}\CD@zH
\dimen@\advance\CD@tI\dimen@\CD@cG0\CD@cG0\CD@cG2\CD@cG6\CD@cG6\CD@cG2\CD@cG0%
\CD@cG0\CD@cG2\CD@cG6\CD@cG0\CD@cG0\CD@cG2\CD@cG2\CD@cG6\CD@cG0\CD@cG0\CD@cG2%
\CD@cG6\CD@cG2\CD@cG2\CD@cG0\CD@cG0}}\def\CD@bJ{10}\def\CD@aJ{}\def\CD@XD{%
\CD@gE\CD@TB\fi\CD@x\CD@WF\CD@HI}\def\CD@x{\CD@QJ\CD@DC\CD@MJ\ifdim\CD@DC=\z@
\else\CD@pF\CD@DC\fi\ifvoid3 \setbox3=\null\ht3\CD@tI\dp3\CD@sI\else\CD@V{\ht
3}\CD@tI\CD@V{\dp3}\CD@sI\fi\dimen3=.5\wd3 \ifdim\dimen3=\z@\CD@tE\else\dimen
3-\CD@XH\fi\else\CD@TB\fi\CD@V{\dimen2}{\wd7}\CD@V{\dimen2}{\wd6}\CD@qJ
\advance\dimen2-2\dimen3 \dimen4.5\dimen2 \dimen2\dimen4 \advance\dimen2%
\CD@eJ\advance\dimen4-\CD@eJ\advance\dimen2-\wd1 \advance\dimen4-\wd5 \ifvoid
2 \else\CD@V{\ht3}{\ht2}\CD@V{\dp3}{\dp2}\CD@V{\dimen2}{\wd2}\fi\ifvoid4 \else
\CD@V{\ht3}{\ht4}\CD@V{\dp3}{\dp4}\CD@V{\dimen4}{\wd4}\fi\advance\skip2\dimen
2 \advance\skip4\dimen4 \CD@tE\advance\skip2\skip4 \dimen0\dimen5 \advance
\dimen0\wd5 \skip3-\skip4 \advance\skip3-\dimen0 \let\CD@jD\empty\else\skip3%
\z@\relax\dimen0\z@\fi}\def\CD@WF{\offinterlineskip\lineskip.2\CD@zC\ifvoid6
\else\setbox3=\vbox{\hbox to2\dimen3{\hss\box6\hss}\box3}\fi\ifvoid7 \else
\setbox3=\vtop{\box3 \hbox to2\dimen3{\hss\box7\hss}}\fi}\def\CD@HI{\kern
\dimen1 \box1 \CD@aJ\CD@iD\hskip\skip2 \kern\dimen0 \ifincommdiag\CD@jE
\penalty1\fi\kern\dimen3 \penalty\CD@GB\hskip\skip3 \null\kern-\dimen3 \else
\hskip\skip3 \fi\box3 \CD@jD\hskip\skip4 \box5 \kern\dimen5}\def\CD@MF{\ifnum
\CD@LH>\CD@TC\CD@V{\dimen1}\objectheight\CD@V{\dimen5}\objectheight\else\CD@V
{\dimen1}\objectwidth\CD@V{\dimen5}\objectwidth\fi}\def\CD@Y{\begingroup
\ifdim\dimen7=\z@\kern\dimen8 \else\ifdim\dimen6=\z@\kern\dimen9 \else\dimen5%
\dimen6 \dimen6\dimen9 \CD@KJ\dimen4\dimen2 \CD@dG{\dimen4}\dimen6\dimen5
\dimen7\dimen8 \CD@KJ\CD@iC{\dimen2}\ifdim\dimen2<\dimen4 \kern\dimen2 \else
\kern\dimen4 \fi\fi\fi\endgroup}\def\CD@jJ{\CD@JI\setbox\z@\hbox{\lower
\axisheight\hbox to\dimen2{\CD@DF\ifPositiveGradient\dimen8\ht\CD@MH\dimen9%
\CD@mI\else\dimen8\dp3 \dimen9\dimen1 \fi\else\dimen8 \ifPositiveGradient
\objectheight\else\z@\fi\dimen9\objectwidth\fi\advance\dimen8
\ifPositiveGradient-\fi\axisheight\CD@Y\unhbox\z@\CD@DF\ifPositiveGradient
\dimen8\dp3 \dimen9\dimen0 \else\dimen8\ht\CD@MH\dimen9\CD@mF\fi\else\dimen8
\ifPositiveGradient\z@\else\objectheight\fi\dimen9\objectwidth\fi\advance
\dimen8 \ifPositiveGradient\else-\fi\axisheight\CD@Y}}}\def\CD@bD{\dimen6
\CD@aK\DiagramCellHeight\dimen7 \CD@WK\DiagramCellWidth\CD@jJ
\ifPositiveGradient\advance\dimen7-\CD@ZK\DiagramCellWidth\else\dimen7 \CD@ZK
\DiagramCellWidth\dimen6\z@\fi\advance\dimen6-\CD@bK\DiagramCellHeight\CD@mK
\setbox0=\rlap{\kern-\dimen7 \lower\dimen6\box\z@}\ht0\z@\dp0\z@\raise
\axisheight\box0 }\def\CD@mK{\setbox0\hbox{\ht\z@\z@\dp\z@\z@\wd\z@\z@\CD@hK
\expandafter\CD@tK{q \CD@eK\space\CD@lK\space\CD@kK\space\CD@eK\space0 0 cm}%
\else\global\CD@iG\CD@eD{\the\CD@TC\space\ifPositiveGradient\else-\fi\the
\CD@LH\space bturn}\fi\box\z@\CD@gK}}\def\CD@vB{\advance\CD@hF-\CD@mI\CD@wJ
\CD@hF\advance\CD@wJ\CD@hI\ifvoid\CD@sH\ifdim\CD@wJ<.1em\ifnum\CD@gD=\@m\else
\CD@aG h\CD@wJ<.1em:objects overprint:\CD@FA\CD@gD\fi\fi\else\ifhbox\CD@sH
\CD@SK\else\CD@TK\fi\advance\CD@wJ\CD@mI\CD@bH{-\CD@mI}{\box\CD@sH}{\CD@wJ}%
\z@\fi\CD@hF-\CD@mF\CD@gD\CD@FA\CD@hI\z@}\def\CD@SK{\setbox\CD@sH=\hbox{%
\unhbox\CD@sH\unskip\unpenalty}\setbox\CD@tH=\hbox{\unhbox\CD@tH\unskip
\unpenalty}\setbox\CD@sH=\hbox to\CD@wJ{\CD@OA\wd\CD@sH\unhbox\CD@sH\CD@PA
\lastkern\unkern\ifdim\CD@PA=\z@\CD@UB\advance\CD@OA-\wd\CD@tH\else\CD@TB\fi
\ifnum\lastpenalty=\z@\else\CD@JA\unpenalty\fi\kern\CD@PA\ifdim\CD@hF<\CD@OA
\CD@JA\fi\ifdim\CD@hI<\wd\CD@tH\CD@JA\fi\CD@jE\CD@hI\CD@wJ\advance\CD@hI-%
\CD@OA\advance\CD@hI\wd\CD@tH\ifdim\CD@hI<2\wd\CD@tH\CD@aG h\CD@hI<2\wd\CD@tH
:arrow too short:\CD@FA\CD@gD\fi\divide\CD@hI\tw@\CD@hF\CD@wJ\advance\CD@hF-%
\CD@hI\fi\CD@tE\kern-\CD@hI\fi\hbox to\CD@hI{\unhbox\CD@tH}\CD@HG}}\CD@tG
\def\pile{\protect\CD@UJ\protect
\CD@uH}\def\CD@uH#1{\CD@l#1\CD@QD}\def\CD@UJ{\CD@nB{pile}\setbox0=\vtop
\bgroup\aftergroup\CD@lD\inpiletrue\let\CD@FE\empty\let\pile\CD@KF\let\CD@QD
\CD@PD\let\CD@GD\CD@FD\CD@yH\baselineskip.5\PileSpacing\lineskip.1\CD@zC
\relax\lineskiplimit\lineskip\mathsurround\z@\tabskip\z@\let\\\CD@wH}\def
\CD@rG\CD@NE{pile only allows one column.}%
\CD@rG\CD@UE{you left it out!}\def\CD@R{\CD@QD\CD@Q\relax\CD@YA{missing \CD@yC
\space inserted after \string\pile}\CD@NE}\def\CD@PD{\CD@MD\crcr\egroup
\egroup}\def\CD@GD{\CD@MD}\def\CD@FD{\CD@MD\relax\CD@QD\CD@YA{missing \CD@yC
\space inserted between \string\pile\space and \CD@HD}\CD@UE}\def\CD@QD{%
\CD@MD}\def\CD@lD{\vbox{\dimen1\dp0 \unvbox0 \setbox0=\lastbox\advance\dimen1%
\dp0 \nointerlineskip\box0 \nointerlineskip\setbox0=\null\dp0.5\dimen1\ht0-%
\dp0 \box0}\ifincommdiag\CD@tJ\penalty-9998 \fi\xdef\CD@YF{pile}}\def\CD@vH{%
\cr}\def\CD@wH{\noalign{\skip@\prevdepth\advance\skip@-\baselineskip
\prevdepth\skip@}}\def\CD@KF#1{#1}\def\CD@TK{\setbox\CD@sH=\vbox{\unvbox
\CD@sH\setbox1=\lastbox\setbox0=\box\voidb@x\CD@tF\setbox\CD@sH=\lastbox
\ifhbox\CD@sH\CD@rC\repeat\unvbox0 \global\CD@QA\CD@ZE}\CD@ZE\CD@QA}\def
\def\CD@gJ{\penalty7
\noindent\unhbox\CD@sH\unskip\setbox\CD@sH=\lastbox\unskip\unhbox\CD@sH
\endgraf\setbox\CD@tH=\lastbox\unskip\setbox\CD@tH=\hbox{\CD@JG\unhbox\CD@tH
\unskip\unskip\unpenalty}\ifcase\prevgraf\cd@shouldnt P\or\ifdim\CD@wJ<\wd
\CD@tH\CD@aG h\CD@wJ<\wd\CD@sH:object in pile too wide:\CD@FA\CD@gD\setbox
\CD@sH=\hbox to\CD@wJ{\hss\unhbox\CD@tH\hss}\else\setbox\CD@sH=\hbox to\CD@wJ
{\hss\kern\CD@hF\unhbox\CD@tH\kern\CD@hI\hss}\fi\or\setbox\CD@sH=\lastbox
\unskip\CD@SK\else\cd@shouldnt Q\fi\unskip\unpenalty}\def\CD@cD{\CD@MJ\ifvoid
3 \setbox3=\null\ht3\axisheight\dp3-\ht3 \dimen3.5\CD@LF\else\dimen4\dp3
\dimen3.5\wd3 \setbox3=\CD@GG{\box3}\dp3\dimen4 \ifdim\ht3=-\dp3 \else\CD@TB
\fi\fi\dimen0\dimen3 \advance\dimen0-.5\CD@LF\setbox0\null\ht0\ht3\dp0\dp3\wd
0\wd3 \ifvoid6\else\setbox6\hbox{\unhbox6\kern\dimen0\kern2pt}\dimen0\wd6 \fi
\ifvoid7\else\setbox7\hbox{\kern2pt\kern\dimen3\unhbox7}\dimen3\wd7 \fi
\setbox3\hbox{\ifvoid6\else\kern-\dimen0\unhbox6\fi\unhbox3 \ifvoid7\else
\unhbox7\kern-\dimen3\fi}\ht3\ht0\dp3\dp0\wd3\wd0 \CD@tE\dimen4=\ht\CD@MH
\advance\dimen4\dp5 \advance\dimen4\dimen1 \let\CD@jD\empty\else\dimen4\ht3
\fi\setbox0\null\ht0\dimen4 \offinterlineskip\setbox8=\vbox spread2ex{\kern
\dimen5 \box1 \CD@iD\vfill\CD@tE\else\kern\CD@eJ\fi\box0}\ht8=\z@\setbox9=%
\vtop spread2ex{\kern-\ht3 \kern-\CD@eJ\box3 \CD@jD\vfill\box5 \kern\dimen1}%
\dp9=\z@\hskip\dimen0plus.0001fil \box9 \kern-\CD@LF\box8 \CD@kE\penalty2 \fi
\CD@tE\penalty1 \fi\kern\PileSpacing\kern-\PileSpacing\kern-.5\CD@LF\penalty
\CD@GB\null\kern\dimen3}\def\CD@cI{\ifhbox\CD@VA\CD@KB{clashing verticals}\ht
\CD@MH.5\dp\CD@VA\dp\CD@MH-\ht5 \CD@yB\ht\CD@MH\z@\dp\CD@MH\z@\fi\dimen1\dp
\CD@VA\CD@xA\prevgraf\unvbox\CD@VA\CD@wA\lastpenalty\unpenalty\setbox\CD@VA=%
\null\setbox\CD@lI=\hbox{\CD@JG\unhbox\CD@lI\unskip\unpenalty\dimen0\lastkern
\unkern\unkern\unkern\kern\dimen0 \CD@HG}\setbox\CD@lF=\hbox{\unhbox\CD@lF
\dimen0\lastkern\unkern\unkern\global\CD@QA\lastkern\unkern\kern\dimen0 }%
\CD@tF\ifnum\CD@xA>4 \CD@zI\repeat\unskip\unskip\advance\CD@mF.5\wd\CD@VA
\advance\CD@mF\wd\CD@lF\advance\CD@mI.5\wd\CD@VA\advance\CD@mI\wd\CD@lI\ifnum
\CD@FA=\CD@lA\CD@OA.5\wd\CD@VA\edef\CD@NK{\the\CD@OA}\fi\setbox\CD@VA=\hbox{%
\kern-\CD@mF\box\CD@lF\unhbox\CD@VA\box\CD@lI\kern-\CD@mI\penalty\CD@wA
\penalty\CD@NB}\ht\CD@VA\dimen1 \dp\CD@VA\z@\wd\CD@VA\CD@tB\CD@vB}\def\CD@zI{%
\ifdim\wd\CD@lF<\CD@QA\setbox\CD@lF=\hbox to\CD@QA{\CD@JG\unhbox\CD@lF}\fi
\advance\CD@xA\m@ne\setbox\CD@VA=\hbox{\box\CD@lF\unhbox\CD@VA}\unskip\setbox
\CD@lF=\lastbox\setbox\CD@lF=\hbox{\unhbox\CD@lF\unskip\unpenalty\dimen0%
\lastkern\unkern\unkern\global\CD@QA\lastkern\unkern\kern\dimen0 }}\def\CD@yB
\def\CD@zB{\unvbox\CD@VA
\CD@wA\lastpenalty\unpenalty\ifdim\dimen1<\ht\CD@MH\CD@aG v\dimen1<\ht\CD@MH:%
rows overprint:\CD@NB\CD@wA\fi}\def\CD@xB{\dimen0=\ht\CD@VA\setbox\CD@VA=%
\hbox\bgroup\advance\dimen1-\ht\CD@MH\unhbox\CD@VA\CD@xA\lastpenalty
\unpenalty\CD@wA\lastpenalty\unpenalty\global\CD@RA-\lastkern\unkern\setbox0=%
\lastbox\CD@tF\setbox\CD@VA=\hbox{\box0\unhbox\CD@VA}\setbox0=\lastbox\ifhbox
0 \CD@kJ\repeat\global\CD@SA-\lastkern\unkern\global\CD@QA\CD@JK\unhbox\CD@VA
\egroup\CD@JK\CD@QA\CD@bH{\CD@SA}{\box\CD@VA}{\CD@RA}{\dimen1}}\def\CD@kJ{%
\setbox0=\hbox to\wd0\bgroup\unhbox0 \unskip\unpenalty\dimen7\lastkern\unkern
\ifnum\lastpenalty=1 \unpenalty\CD@UB\else\CD@TB\fi\ifnum\lastpenalty=2
\unpenalty\dimen2.5\dimen0\advance\dimen2-.5\dimen1\advance\dimen2-%
\axisheight\else\dimen2\z@\fi\setbox0=\lastbox\dimen6\lastkern\unkern\setbox1%
=\lastbox\setbox0=\vbox{\unvbox0 \CD@tE\kern-\dimen1 \else\ifdim\dimen2=\z@
\else\kern\dimen2 \fi\fi}\ifdim\dimen0<\ht0 \CD@aG v\dimen0<\ht0:upper part of
vertical too short:{\CD@tE\CD@NB\else\CD@wA\fi}\CD@xA\else\setbox0=\vbox to%
\dimen0{\unvbox0}\fi\setbox1=\vtop{\unvbox1}\ifdim\dimen1<\dp1 \CD@aG v\dimen
1<\dp1:lower part of vertical too short:\CD@NB\CD@wA\else\setbox1=\vtop to%
\dimen1{\ifdim\dimen2=\z@\else\kern-\dimen2 \fi\unvbox1 }\fi\box1 \kern\dimen
6 \box0 \kern\dimen7 \CD@HG\global\CD@QA\CD@JK\egroup\CD@JK\CD@QA\relax}%
\let\CD@LB
\let\CD@mA\CD@XB\newcount\CD@MB\CD@tG
\def\CD@nD{-1}\def\CD@K{\CD@t\ifnum\CD@nD<\z@\else
\begingroup\scrollmode\showboxdepth\CD@nD\showboxbreadth\maxdimen\showlists
\endgroup\fi\CD@bI\CD@zF\CD@CA=\CD@u\advance\CD@CA1 \CD@XB=\CD@CA\ifnum\CD@NB
=1 \CD@JA\fi\advance\CD@XB\CD@NB\dimen1\z@\skip0\z@\count@=\insc@unt\advance
\count@\CD@u\divide\count@2 \ifnum\CD@XB>\count@\CD@KB{The diagram has too
many rows! It can't be reformatted.}\else\CD@NG\CD@WI\fi\CD@cH}\def\CD@NG{%
\CD@NB\CD@CA\CD@uF\ifnum\CD@NB<\CD@XB\setbox\CD@NB\box\voidb@x\advance\CD@NB1%
\relax\repeat\CD@NB\CD@CA\skip\z@\z@\CD@uF\CD@GB\lastpenalty\unpenalty\ifnum
\CD@GB>\z@\CD@KE\repeat\ifnum\CD@GB=-123 \CD@tJ\unpenalty\else\cd@shouldnt D%
\fi\ifx\v@grid\relax\else\CD@NB\CD@XB\advance\CD@NB\m@ne\expandafter\CD@VJ
\v@grid\fi\CD@MB\CD@mA\CD@tB\z@\CD@XG\ifx\h@grid\relax\else\expandafter\CD@LJ
\h@grid\fi\count@\CD@XB\advance\count@\m@ne\CD@YB\ht\count@}\def\CD@KE{%
\ifcase\CD@GB\or\CD@MG\else\CD@uA-\lastpenalty\unpenalty\CD@vA\lastpenalty
\unpenalty\setbox0=\lastbox\CD@WG\fi\CD@wD}\def\CD@wD{\skip1\lastskip\unskip
\advance\skip0\skip1 \ifdim\skip1=\z@\else\expandafter\CD@wD\fi}\def\CD@MG{%
\setbox0=\lastbox\CD@pI\dp0 \advance\CD@pI\skip\z@\skip\z@\z@\advance\CD@NF
\CD@pI\CD@uE\ifnum\CD@NB>\CD@CA\CD@NF\DiagramCellHeight\CD@pI\CD@NF\advance
\CD@pI-\CD@qI\fi\fi\CD@qI\ht0 \CD@NF\CD@qI\setbox\CD@NB\hbox{\unhbox\CD@NB
\unhbox0}\dp\CD@NB\CD@pI\ht\CD@NB\CD@qI\advance\CD@NB1 }\def\CD@WG{\ifnum
\CD@uA<\z@\advance\CD@uA\CD@XB\ifnum\CD@uA<\CD@CA\CD@UG\else\CD@OA\dp\CD@uA
\CD@PA\ht\CD@uA\setbox\CD@uA\hbox{\box\z@\penalty\CD@vA\penalty\CD@GB\unhbox
\CD@uA}\dp\CD@uA\CD@OA\ht\CD@uA\CD@PA\fi\else\CD@UG\fi}\def\CD@UG{\CD@KB{%
diagonal goes outside diagram (lost)}}\def\CD@fI{\advance\CD@uA\CD@XB\ifnum
\CD@uA<\CD@CA\CD@UG\else\ifnum\CD@uA=\CD@NB\CD@VG\else\ifnum\CD@uA>\CD@NB
\cd@shouldnt M\else\CD@OA\dp\CD@uA\CD@PA\ht\CD@uA\setbox\CD@uA\hbox{\box\z@
\penalty\CD@vA\penalty\CD@GB\unhbox\CD@uA}\dp\CD@uA\CD@OA\ht\CD@uA\CD@PA\fi
\fi\fi}\def\CD@WI{\CD@AJ\setbox\CD@PC=\hbox{\CD@k A\@super f\CD@lJ f\CD@ND}%
\CD@ZE\z@\CD@JK\z@\CD@kI\z@\CD@kF\z@\CD@NB=\CD@XB\CD@NF\z@\CD@uB\z@\CD@uF
\ifnum\CD@NB>\CD@CA\advance\CD@NB\m@ne\CD@qI\ht\CD@NB\CD@pI\dp\CD@NB\advance
\CD@NF\CD@qI\CD@rI\advance\CD@uB\CD@NF\CD@KC\CD@ZI\CD@w\ht\CD@NB\CD@qI\dp
\CD@NB\CD@pI\nointerlineskip\box\CD@NB\CD@NF\CD@pI\setbox\CD@NB\null\ht\CD@NB
\CD@uB\repeat\CD@wB\nointerlineskip\box\CD@NB\CD@gG\CD@ZE\DiagramCellWidth{%
width}\CD@gG\CD@JK\DiagramCellHeight{height}\CD@VA\CD@LB\advance\CD@VA-\CD@lA
\advance\CD@VA\m@ne\advance\CD@VA\CD@mA\dimen0\wd\CD@VA\CD@tI\axisheight
\dimen1\CD@uB\advance\dimen1-\CD@YB\dimen2\CD@kI\advance\dimen2-\dimen0
\advance\CD@XB-\CD@CA\advance\CD@LB-\CD@lA}\count@\year\multiply\count@12
\iftrue\message{gone February
2031!}\repeat\fi\def\CD@wB{\CD@qI-\CD@NF\CD@pI\CD@NF\setbox\CD@MH=\null\dp
\CD@MH\CD@NF\ht\CD@MH-\CD@NF\CD@mF\z@\CD@mI\z@\CD@lA\CD@LB\advance\CD@lA-%
\CD@MB\advance\CD@lA\CD@mA\CD@FA\CD@LB\CD@VA\CD@MB\CD@sF\ifnum\CD@FA>\CD@lA
\advance\CD@FA\m@ne\advance\CD@VA\m@ne\CD@tB\wd\CD@VA\setbox\CD@FA=\box
\voidb@x\CD@yB\repeat\CD@w\ht\CD@NB\CD@qI\dp\CD@NB\CD@pI}\def\CD@gG#1#2#3{%
\ifdim#1>.01\CD@zC\CD@PA#2\relax\advance\CD@PA#1\relax\advance\CD@PA.99\CD@zC
\count@\CD@PA\divide\count@\CD@zC\CD@KB{increase cell #3 to \the\count@ em}%
\fi}\def\CD@rI{\CD@FA=\CD@LB\penalty4 \noindent\unhbox\CD@NB\CD@sF\unskip
\setbox0=\lastbox\ifhbox0 \advance\CD@FA\m@ne\setbox\CD@FA\hbox to\wd0{\null
\penalty-9990\null\unhbox0}\repeat\CD@lA\CD@FA\advance\CD@FA\CD@MB\advance
\CD@FA-\CD@mA\ifnum\CD@FA<\CD@LB\count@\CD@FA\advance\count@\m@ne\dimen0=\wd
\count@\count@\CD@MB\advance\count@\m@ne\CD@tB\wd\count@\CD@sF\ifnum\CD@FA<%
\CD@LB\CD@DJ\CD@XG\dimen0\wd\CD@FA\advance\CD@FA1 \repeat\fi\CD@sF\CD@GB
\lastpenalty\unpenalty\ifnum\CD@GB>\z@\CD@vA\lastpenalty\unpenalty\CD@VG
\repeat\endgraf\unskip\ifnum\lastpenalty=4 \unpenalty\else\cd@shouldnt S\fi}%
\def\CD@VG{\advance\CD@vA\CD@lA\advance\CD@vA\m@ne\setbox0=\lastbox\ifnum
\CD@vA<\CD@LB\setbox\CD@vA\hbox{\box0\penalty\CD@GB\unhbox\CD@vA}\else\CD@UG
\fi}\def\CD@bG{}\CD@tG\CD@uE\CD@WB\CD@VB\def\CD@DJ{\advance\dimen0\wd\CD@FA
\divide\dimen0\tw@\CD@uE\dimen0\DiagramCellWidth\else\CD@V{\dimen0}%
\DiagramCellWidth\CD@pJ\fi\advance\CD@tB\dimen0 }\def\CD@XG{\setbox\CD@MB=%
\vbox{}\dp\CD@MB=\CD@uB\wd\CD@MB\CD@tB\advance\CD@MB1 }\def\CD@LJ#1,{\def
\CD@GK{#1}\ifx\CD@GK\CD@RD\else\advance\CD@tB\CD@GK\DiagramCellWidth\CD@XG
\expandafter\CD@LJ\fi}\def\CD@VJ#1,{\def\CD@GK{#1}\ifx\CD@GK\CD@RD\else\ifnum
\CD@NB>\CD@CA\CD@NF\CD@GK\DiagramCellHeight\advance\CD@NF-\dp\CD@NB\advance
\CD@NB\m@ne\ht\CD@NB\CD@NF\fi\expandafter\CD@VJ\fi}\def\CD@pJ{\CD@wE\CD@OA
\dimen0 \advance\CD@OA-\DiagramCellWidth\ifdim\CD@OA>2\MapShortFall\CD@KB{%
badly drawn diagonals (see manual)}\let\CD@pJ\empty\fi\else\let\CD@pJ\empty
\fi}\def\CD@KC{\CD@VA\CD@mA\CD@sF\ifnum\CD@VA<\CD@MB\dimen0\dp\CD@VA\advance
\dimen0\CD@NF\dp\CD@VA\dimen0 \advance\CD@VA1 \repeat}\def\CD@bH#1#2#3#4{%
\ifnum\CD@FA<\CD@LB\CD@OA=#1\relax\setbox\CD@FA=\hbox{\setbox0=#2\dimen7=#4%
\relax\dimen8=#3\relax\ifhbox\CD@FA\unhbox\CD@FA\advance\CD@OA-\lastkern
\unkern\fi\ifdim\CD@OA=\z@\else\kern-\CD@OA\fi\raise\dimen7\box0 \kern-\dimen
8 }\ifnum\CD@FA=\CD@lA\CD@V\CD@kF\CD@OA\fi\else\cd@shouldnt O\fi}\def\CD@w{%
\setbox\CD@NB=\hbox{\CD@FA\CD@lA\CD@VA\CD@mA\CD@PA\z@\relax\CD@sF\ifnum\CD@FA
<\CD@LB\CD@tB\wd\CD@VA\relax\CD@eI\advance\CD@FA1 \advance\CD@VA1 \repeat}%
\CD@V\CD@kI{\wd\CD@NB}\wd\CD@NB\z@}\def\CD@eI{\ifhbox\CD@FA\CD@OA\CD@tB\relax
\advance\CD@OA-\CD@PA\relax\ifdim\CD@OA=\z@\else\kern\CD@OA\fi\CD@PA\CD@tB
\advance\CD@PA\wd\CD@FA\relax\unhbox\CD@FA\advance\CD@PA-\lastkern\unkern\fi}%
\def\CD@ZI{\setbox\CD@sH=\box\voidb@x\CD@VA=\CD@MB\CD@FA\CD@LB\CD@VA\CD@mA
\advance\CD@VA\CD@FA\advance\CD@VA-\CD@lA\advance\CD@VA\m@ne\CD@tB\wd\CD@VA
\count@\CD@LB\advance\count@\m@ne\CD@hF.5\wd\count@\advance\CD@hF\CD@tB\CD@A
\m@ne\CD@gD\@m\CD@sF\ifnum\CD@FA>\CD@lA\advance\CD@FA\m@ne\advance\CD@hF-%
\CD@tB\CD@PI\wd\CD@VA\CD@tB\advance\CD@hF\CD@tB\advance\CD@VA\m@ne\CD@tB\wd
\CD@VA\repeat\CD@mF\CD@kF\CD@mI-\CD@mF\CD@vB}\newcount\CD@GB\def\CD@s{}\def
\def\parskip{\cd@shouldnt{PS}}\prevdepth\z@}\newbox\CD@KG\newbox\CD@IG
\def\CD@JG{\unhcopy\CD@KG}\def\CD@HG{\unhcopy\CD@IG}\def\CD@iJ{\hbox{}%
\penalty1\nointerlineskip}\def\CD@PI{\penalty5 \noindent\setbox\CD@MH=\null
\CD@mF\z@\CD@mI\z@\ifnum\CD@FA<\CD@LB\ht\CD@MH\ht\CD@FA\dp\CD@MH\dp\CD@FA
\unhbox\CD@FA\skip0=\lastskip\unskip\else\CD@OK\skip0=\z@\fi\endgraf\ifcase
\prevgraf\cd@shouldnt Y \or\cd@shouldnt Z \or\CD@RI\or\CD@XI\else\CD@QI\fi
\unskip\setbox0=\lastbox\unskip\unskip\unpenalty\noindent\unhbox0\setbox0%
\lastbox\unpenalty\unskip\unskip\unpenalty\setbox0\lastbox\CD@tF\CD@GB
\lastpenalty\unpenalty\ifnum\CD@GB>\z@\setbox\z@\lastbox\CD@lB\repeat\endgraf
\unskip\unskip\unpenalty}\def\CD@YJ{\CD@uA\CD@XB\advance\CD@uA-\CD@NB\CD@vA
\CD@FA\advance\CD@vA-\CD@lA\advance\CD@vA1 \expandafter\message{prevgraf=\the
\prevgraf at (\the\CD@uA,\the\CD@vA)}}\def\CD@XI{\CD@CE\setbox\CD@lI=\lastbox
\setbox\CD@lI=\hbox{\unhbox\CD@lI\unskip\unpenalty}\unskip\ifdim\ht\CD@lI>\ht
\CD@PC\setbox\CD@MH=\copy\CD@lI\else\ifdim\dp\CD@lI>\dp\CD@PC\setbox\CD@MH=%
\copy\CD@lI\else\CD@FG\CD@lI\fi\fi\advance\CD@mF.5\wd\CD@lI\advance\CD@mI.5%
\wd\CD@lI\setbox\CD@lI=\hbox{\unhbox\CD@lI\CD@HG}\CD@bH\CD@mF{\box\CD@lI}%
\CD@mI\z@\CD@yB\CD@vB}\def\CD@CE{\ifnum\CD@A>0 \advance\dimen0-\CD@tB\CD@iA-.%
5\dimen0 \CD@A-\CD@A\else\CD@A0 \CD@iA\z@\fi\setbox\CD@MH=\lastbox\setbox
\CD@MH=\hbox{\unhbox\CD@MH\unskip\unskip\unpenalty\setbox0=\lastbox\global
\CD@QA\lastkern\unkern}\advance\CD@iA-.5\CD@QA\unskip\setbox\CD@MH=\null
\CD@mI\CD@iA\CD@mF-\CD@iA}\def\CD@Z{\ht\CD@MH\CD@tI\dp\CD@MH\CD@sI}\def\CD@FG
\def
\def\CD@VI{\CD@FG\CD@lI\CD@UI\setbox\CD@sH=\box
\CD@lF\setbox\CD@tH=\box\CD@lI}\def\CD@YI{\CD@FG\CD@lF\setbox\CD@lI\hbox{%
\penalty8 \unhbox\CD@lI\unskip\unpenalty\ifnum\lastpenalty=8 \else\CD@xH\fi}%
\CD@UI\setbox\CD@lF=\hbox{\unhbox\CD@lF\unskip\unpenalty\global\setbox\CD@DA=%
\lastbox}\ifdim\wd\CD@lF=\z@\else\CD@xH\fi\setbox\CD@sH=\box\CD@DA}\def\CD@xH
\def\CD@UI{\CD@yB
\ifvoid\CD@sH\else\CD@KB{Clashing horizontal arrows}\CD@mI.5\CD@hF\CD@mF-%
\CD@mI\CD@vB\CD@mI\z@\CD@mF\z@\fi\CD@hI\CD@hF\advance\CD@hI-\CD@mI\CD@hF-%
\CD@mF\CD@JC\CD@FA}\def\CD@RI{\setbox0\lastbox\unskip\CD@iA\z@\CD@Z\ifdim
\skip0>\z@\CD@tJ\CD@A0 \else\ifnum\CD@A<1 \CD@A0 \dimen0\CD@tB\fi\advance
\CD@A1 \fi}\def\VonH{\CD@MA46\VonH{.5\CD@LF}}\def\HonV{\CD@MA57\HonV{.5\CD@LF
}}\def\HmeetV{\CD@MA44\HmeetV{-\MapShortFall}}\def\CD@MA#1#2#3#4{\CD@pB34#1{%
\string#3}\CD@SD\CD@GB-999#2 \dimen0=#4\CD@tI\dimen0\advance\CD@tI\axisheight
\CD@sI\dimen0\advance\CD@sI-\axisheight\CD@CF\CD@HC\CD@ZD}\def\CD@HC#1{%
\setbox0=\hbox{\CD@k#1\CD@ND}\dimen0.5\wd0 \CD@tI\ht0 \CD@sI\dp0 \CD@ZD}\def
\def\CD@TI{\CD@GC\CD@yB}\def\CD@dI{\CD@GC\CD@vB}\def\CD@SI{\CD@GC
\CD@yB\CD@vB}\def\CD@GC{\setbox\CD@lI=\hbox{\unhbox\CD@lI}\setbox\CD@lF=\hbox
{\unhbox\CD@lF\global\setbox\CD@DA=\lastbox}\ht\CD@MH\ht\CD@DA\dp\CD@MH\dp
\CD@DA\advance\CD@mF\wd\CD@DA\advance\CD@mI\wd\CD@lI}\CD@tG
\def
\def
\let\CD@rF\tenln\def\Use@line@char#1{\hbox{#1%
\CD@rF\ifPositiveGradient\else\advance\count@64 \fi\char\count@}}\def\CD@cF{%
\Use@line@char{\count@\CD@TC\multiply\count@8\advance\count@-9\advance\count@
\CD@LH}}\def\CD@ZF{\Use@line@char{\ifcase\DiagonalChoice\CD@gF\or\CD@fF\or
\CD@fF\else\CD@gF\fi}}\def\CD@gF{\ifnum\CD@TC=\z@\count@'33 \else\count@
\CD@TC\multiply\count@\sixt@@n\advance\count@-9\advance\count@\CD@LH\advance
\count@\CD@LH\fi}\def\CD@fF{\count@'\ifcase\CD@LH55\or\ifcase\CD@TC66\or22\or
52\or61\or72\fi\or\ifcase\CD@TC66\or25\or22\or63\or52\fi\or\ifcase\CD@TC66\or
16\or36\or22\or76\fi\or\ifcase\CD@TC66\or27\or25\or67\or22\fi\fi\relax}\def
\def
\def\CD@jI#1{\hbox{\setbox0=#1%
\dimen0=\wd0 \vbox to.25\ht0{\ifPositiveGradient\vss\fi\box0
\ifPositiveGradient\else\vss\fi}\kern-.75\dimen0 }}\CD@RC{+h:>}{%
\def\CD@tC#1#2{\vbox to#1{\vss\hbox to#%
2{\hss.\hss}\vss}}\def\hfdot{\CD@tC{2\axisheight}{.5em}}%
\def\vfdot{\CD@tC{1ex}\z@}
\def\CD@bF{\hbox{\dimen0=.3\CD@zC\dimen1\dimen0 \ifnum\CD@LH>\CD@TC\CD@iC{%
\dimen1}\else\CD@dG{\dimen0}\fi\CD@tC{\dimen0}{\dimen1}}}\newarrowfiller{.}%
\def\dfdot{\CD@bF\CD@CK}\CD@RC{+f:.}{\dfdot}\CD@RC{-f%
\def\CD@@K#1{\hbox\bgroup\def\CD@CH{#1\egroup}\afterassignment
\CD@CH
\count@='}\def\lnchar{\CD@@K\CD@qF}\def\CD@dF#1{\setbox#1=\hbox{\dimen5\dimen
#1 \setbox8=\box#1 \dimen1\wd8 \count@\dimen5 \divide\count@\dimen1 \ifnum
\count@=0 \box8 \ifdim\dimen5<.95\dimen1 \CD@gB{diagonal line too short}\fi
\else\dimen3=\dimen5 \advance\dimen3-\dimen1 \divide\dimen3\count@\dimen4%
\dimen3 \CD@dG{\dimen4}\ifPositiveGradient\multiply\dimen4\m@ne\fi\dimen6%
\dimen1 \advance\dimen6-\dimen3 \loop\raise\count@\dimen4\copy8 \ifnum\count@
>0 \kern-\dimen6 \advance\count@\m@ne\repeat\fi}}\def\CD@CG#1{\CD@EF\CD@xJ{#1%
}\else\CD@dF{#1}\fi}\def\CD@IH#1{}\newdimen\objectheight\objectheight1.8ex
\newdimen\objectwidth\objectwidth1em \def\CD@YD{\dimen6=\CD@aK
\DiagramCellHeight\dimen7=\CD@WK\DiagramCellWidth\CD@KJ\ifnum\CD@LH>0 \ifnum
\CD@TC>0 \CD@aF\else\aftergroup\CD@VC\fi\else\aftergroup\CD@UC\fi}\def\CD@VC{%
\CD@YA{diagonal map is nearly vertical}\CD@NA}\def\CD@UC{\CD@YA{diagonal map
is nearly horizontal}\CD@NA}\CD@rG\CD@NA{Use an orthogonal map instead}\def
\axisheight\CD@iC{\dimen8%
}\CD@X{\dimen8}{.5\wd3}\dimen9\dp3\advance\dimen9\axisheight\CD@iC{\dimen9}%
\else\CD@CG{2}\CD@CG{4}\ifPositiveGradient\dimen2-\dimen0%
\fi\rlap{\unhbox1}\fi\raise
\def\NorthWest{\CD@BI
\CD@rB\DiagonalChoice0 }\def\NorthEast{\CD@CI\CD@rB\DiagonalChoice1 }\def
\def\SouthEast{\CD@BI\CD@qB
\DiagonalChoice2 }\def\CD@aD{\vadjust{\CD@uA\CD@FA\advance\CD@uA
\ifPositiveGradient\else-\fi\CD@ZK\relax\CD@vA\CD@NB\advance\CD@vA-\CD@bK
\relax\hbox{\advance\CD@uA\ifPositiveGradient-\fi\CD@WK\advance\CD@vA\CD@aK
\hbox{\box6 \kern\CD@DC\kern\CD@eJ\penalty1 \box7 \box\z@}\penalty\CD@uA
\penalty\CD@vA}\penalty\CD@uA\penalty\CD@vA\penalty104}}\def\CD@eH#1{\relax
\vadjust{\hbox@maths{#1}\penalty\CD@FA\penalty\CD@NB\penalty\tw@}}\def\CD@lB{%
\ifcase\CD@GB\or\or\CD@bH{.5\wd0}{\box0}{.5\wd0}\z@\or\unhbox\z@\setbox\z@
\lastbox\CD@bH{.5\wd0}{\box0}{.5\wd0}\z@\unpenalty\unpenalty\setbox\z@
\lastbox\or\CD@TG\else\advance\CD@GB-100 \ifnum\CD@GB<\z@\cd@shouldnt B\fi
\setbox\z@\hbox{\kern\CD@mF\copy\CD@MH\kern\CD@mI\CD@uA\CD@XB\advance\CD@uA-%
\CD@NB\penalty\CD@uA\CD@uA\CD@FA\advance\CD@uA-\CD@lA\penalty\CD@uA\unhbox\z@
\global\CD@yA\lastpenalty\unpenalty\global\CD@zA\lastpenalty\unpenalty}\CD@uA
-\CD@yA\CD@vA\CD@zA\CD@fI\fi}\def\CD@TG{\unhbox\z@\setbox\z@\lastbox\CD@uA
\lastpenalty\unpenalty\advance\CD@uA\CD@mA\CD@vA\CD@XB\advance\CD@vA-%
\lastpenalty\unpenalty\dimen1\lastkern\unkern\setbox3\lastbox\dimen0\lastkern
\unkern\setbox0=\hbox to\z@{\unhbox0\setbox0\lastbox\setbox7\lastbox
\unpenalty\CD@eJ\lastkern\unkern\CD@DC\lastkern\unkern\setbox6\lastbox\dimen7%
\CD@tB\advance\dimen7-\wd\CD@uA\ifdim\dimen7<\z@\CD@CI\multiply\dimen7\m@ne
\let\mv\empty\else\CD@BI\def\mv{\raise\ht1}\kern-\dimen7 \fi\ifnum\CD@vA>%
\CD@NB\dimen6\CD@uB\advance\dimen6-\ht\CD@vA\else\dimen6\z@\fi\CD@jJ\CD@mK
\setbox1\null\ht1\dimen6\wd1\dimen7 \dimen7\dimen2 \dimen6\wd1 \CD@KJ\CD@uA
\CD@LH\CD@vA\CD@TC\dimen6\ht1 \CD@KJ\setbox2\null\divide\dimen2\tw@\advance
\dimen2\CD@eJ\CD@eG{\dimen2}\wd2\dimen2 \dimen0.5\dimen7 \advance\dimen0%
\ifPositiveGradient\else-\fi\CD@eJ\CD@dG{\dimen0}\advance\dimen0-\axisheight
\ht2\dimen0 \dimen0\CD@DC\CD@eG{\dimen0}\advance\dimen0\ht2\ht2\dimen0 \dimen
0\ifPositiveGradient-\fi\CD@DC\CD@dG{\dimen0}\advance\dimen0\wd2\wd2\dimen0
\setbox4\null\dimen0 .6\CD@zC\CD@eG{\dimen0}\ht4\dimen0 \dimen0 .2\CD@zC
\CD@dG{\dimen0}\wd4\dimen0 \dimen0\wd2 \ifvoid6\else\dimen1\ht4 \advance
\dimen1\ht2 \CD@CC6+-\raise\dimen1\rlap{\ifPositiveGradient\advance\dimen0-%
\wd6\advance\dimen0-\wd4 \else\advance\dimen0\wd4 \fi\kern\dimen0\box6}\fi
\dimen0\wd2 \ifvoid7\else\dimen1\ht4 \advance\dimen1-\ht2 \CD@CC7-+\lower
\dimen1\rlap{\ifPositiveGradient\advance\dimen0\wd4 \else\advance\dimen0-\wd7%
\advance\dimen0-\wd4 \fi\kern\dimen0\box7}\fi\mv\box0\hss}\ht0\z@\dp0\z@
\CD@bH{\z@}{\box\z@}{\z@}{\axisheight}}\def\CD@CC#1#2#3{\dimen4.5\wd#1 \ifdim
\dimen4>.25\dimen7\dimen4=.25\dimen7\fi\ifdim\dimen4>\CD@zC\dimen4.4\dimen4
\advance\dimen4.6\CD@zC\fi\CD@eG{\dimen4}\dimen5\axisheight\CD@dG{\dimen5}%
\advance\dimen4-\dimen5 \dimen5\dimen4\CD@eG{\dimen5}\advance\dimen0%
\ifPositiveGradient#2\else#3\fi\dimen5 \CD@dG{\dimen4}\advance\dimen1\dimen4 }
\def\CD@eD#1{\expandafter\CD@IK{#1}}\CD@ZA\CD@EK{output is PostScript
dependent}\def\CD@SC{\CD@IK{/bturn {gsave currentpoint currentpoint translate
4 2 roll neg exch atan rotate neg exch neg exch translate } def /eturn {%
currentpoint grestore moveto} def}}\def\CD@gK{\relax\CD@hK\CD@tK{Q}\else
\CD@IK{eturn}\fi} \def\CD@OJ#1{\count@#1\relax\multiply\count@7\advance
\count@16577\divide\count@33154 }\def\CD@fD#1{\expandafter\special{#1}} \def
\CD@LF\CD@fD{pn \the\count@}\CD@fD{pa 0 0}\CD@OJ{\dimen#%
\def\CD@JI{%
\CD@KJ\begingroup\ifdim\dimen7<\dimen6 \dimen2=\dimen6 \dimen6=\dimen7 \dimen
7=\dimen2 \count@\CD@LH\CD@LH\CD@TC\CD@TC\count@\else\dimen2=\dimen7 \fi
\ifdim\dimen6>.01\p@\CD@KI\global\CD@QA\dimen0 \else\global\CD@QA\dimen7 \fi
\endgroup\dimen2\CD@QA\CD@iK\CD@lK{\ifPositiveGradient\else-\fi\dimen6}\CD@iK
\CD@kK{\ifPositiveGradient-\fi\dimen6}\CD@iK\CD@eK{\dimen7}}\def\CD@KI{\CD@hJ
\ifdim\dimen7>1.73\dimen6 \divide\dimen2 4 \multiply\CD@TC2 \else\dimen2=0.%
353553\dimen2 \advance\CD@LH-\CD@TC\multiply\CD@TC4 \fi\dimen0=4\dimen2 \CD@ZG
4\CD@ZG{-2}\CD@ZG2\CD@ZG{-2.5}}\def\CD@AI{\begingroup\count@\dimen0 \dimen2 45%
pt \divide\count@\dimen2 \ifdim\dimen0<\z@\advance\count@\m@ne\fi\ifodd
\count@\advance\count@1\CD@@A\else\CD@y\fi\advance\dimen0-\count@\dimen2
\CD@gE\multiply\dimen0\m@ne\fi\ifnum\count@<0 \multiply\count@-7 \fi\dimen3%
\dimen1 \dimen6\dimen0 \dimen7 3754936sp \ifdim\dimen0<6\p@\def\CD@OG{4000}%
\fi\CD@KJ\dimen2\dimen3\CD@dG{\dimen2}\CD@hJ\multiply\CD@TC-6 \dimen0\dimen2
\CD@ZG1\CD@ZG{0.3}\dimen1\dimen0 \dimen2\dimen3 \dimen0\dimen3 \CD@ZG3\CD@ZG{%
1.5}\CD@ZG{0.3}\divide\count@2 \CD@gE\multiply\dimen1\m@ne\fi\ifodd\count@
\dimen2\dimen1\dimen1\dimen0\dimen0-\dimen2 \fi\divide\count@2 \ifodd\count@
\multiply\dimen0\m@ne\multiply\dimen1\m@ne\fi\global\CD@QA\dimen0\global
\CD@RA\dimen1\endgroup\dimen6\CD@QA\dimen7\CD@RA}\def\CD@OC{255}\let\CD@OG
\def\CD@KJ{\begingroup\ifdim\dimen7<\dimen6 \dimen9\dimen7\dimen7\dimen
6\dimen6\dimen9\CD@@A\else\CD@y\fi\dimen2\z@\dimen3\CD@XH\dimen4\CD@XH\dimen0%
\z@\dimen8=\CD@OG\CD@XH\CD@lC\global\CD@yA\dimen\CD@gE0\else3\fi\global\CD@zA
\dimen\CD@gE3\else0\fi\endgroup\CD@LH\CD@yA\CD@TC\CD@zA}\def\CD@lC{\count@
\dimen6 \divide\count@\dimen7 \advance\dimen6-\count@\dimen7 \dimen9\dimen4
\advance\dimen9\count@\dimen0 \ifdim\dimen9>\dimen8 \CD@@C\else\CD@AC\ifdim
\dimen6>\z@\dimen9\dimen6 \dimen6\dimen7 \dimen7\dimen9 \expandafter
\expandafter\expandafter\CD@lC\fi\fi}\def\CD@@C{\ifdim\dimen0=\z@\ifdim\dimen
9<2\dimen8 \dimen0\dimen8 \fi\else\advance\dimen8-\dimen4 \divide\dimen8%
\dimen0 \ifdim\count@\CD@XH<2\dimen8 \count@\dimen8 \dimen9\dimen4 \advance
\dimen9\count@\dimen0 \CD@AC\fi\fi}\def\CD@AC{\dimen4\dimen0 \dimen0\dimen9
\advance\dimen2\count@\dimen3 \dimen9\dimen2 \dimen2\dimen3 \dimen3\dimen9 }%
\def\CD@ZG#1{\CD@dG{\dimen2}\advance\dimen0 #1\dimen2 }\def\CD@dG#1{\divide#1%
\CD@TC\multiply#1\CD@LH}\def\CD@eG#1{\divide#1\CD@vA\multiply#1\CD@uA}\def
\def\CD@hJ{\dimen6\CD@LH\CD@XH
\multiply\dimen6\CD@LH\dimen7\CD@TC\CD@XH\multiply\dimen7\CD@TC\CD@KJ}\def
\let\CD@GH
\def\CD@GH{\errorcontextlines\m@ne}\fi\ifnum\inputlineno<0 \let
\let\CD@W\empty\let\CD@mD\relax\let\CD@uI\relax\let\CD@vI\relax
\let\CD@zF\relax\message{! Why not upgrade to TeX version 3? (available since
1990)}\else\def\CD@W{ at line \number\inputlineno}\def\CD@mD{ - first occurred%
}\def\CD@uI{\edef\CD@h{\the\inputlineno}\global\let\CD@jB\CD@h}\def\CD@h{9999%
}\def\CD@vI{\xdef\CD@jB{\the\inputlineno}}\def\CD@jB{\CD@h}\def\CD@zF{\ifnum
\CD@h<\inputlineno\edef\CD@CD{\space at lines \CD@h--\the\inputlineno}\else
\edef\CD@CD{\CD@W}\fi}\fi\let\CD@CD\empty\def\CD@YA#1#2{\CD@GH\errhelp=#2%
\expandafter\errmessage{\CD@tA: #1}}\def\CD@KB#1{\begingroup\expandafter
\message{! \CD@tA: #1\CD@CD}\ifnum\CD@XB>\CD@NB\ifnum\CD@CA>\CD@NB\else\ifnum
\CD@lA>\CD@FA\else\ifnum\CD@LB>\CD@FA\advance\CD@XB-\CD@NB\advance\CD@FA-%
\CD@lA\advance\CD@FA1\relax\expandafter\message{! (error detected at row \the
\CD@XB, column \the\CD@FA, but probably caused elsewhere)}\fi\fi\fi\fi
\endgroup}\def\CD@gB#1{{\expandafter\message{\CD@tA\space Warning: #1\CD@W}}}%
\def\CD@CB#1#2{\CD@gB{#1 \string#2 is obsolete\CD@mD}}\def\CD@AB#1{\CD@CB{%
Dimension}{#1}\CD@DE#1\CD@BB\CD@BB}\def\CD@BB{\CD@OA=}\def\CD@@B#1{\CD@CB{%
Count}{#1}\CD@DE#1\CD@OH\CD@OH}\def\CD@OH{\count@=}\def\HorizontalMapLength{%
\CD@AB\HorizontalMapLength}\def\VerticalMapHeight{\CD@AB\VerticalMapHeight}%
\def\VerticalMapDepth{\CD@AB\VerticalMapDepth}\def\VerticalMapExtraHeight{%
\CD@AB\VerticalMapExtraHeight}\def\VerticalMapExtraDepth{\CD@AB
\VerticalMapExtraDepth}\def\DiagonalLineSegments{\CD@@B\DiagonalLineSegments}%
\CD@ZA\CD@KH{\CD@eF\space diagonal line and arrow font not
available}\else\let\CD@KH\relax\fi\def\CD@aG#1#2<#3:#4:#5#6{\begingroup\CD@PA
#3\relax\advance\CD@PA-#2\relax\ifdim.1em<\CD@PA\CD@uA#5\relax\CD@vA#6\relax
\ifnum\CD@uA<\CD@vA\count@\CD@vA\advance\count@-\CD@uA\CD@KB{#4 by \the\CD@PA
}\if#1v\let\CD@CH\CD@JK\edef\tmp{\the\CD@uA--\the\CD@vA,\the\CD@FA}\else
\advance\count@\count@\if#1l\advance\count@-\CD@A\else\if#1r\advance\count@
\CD@A\fi\fi\advance\CD@PA\CD@PA\let\CD@CH\CD@ZE\edef\tmp{\the\CD@NB,\the
\CD@uA--\the\CD@vA}\fi\divide\CD@PA\count@\ifdim\CD@CH<\CD@PA\global\CD@CH
\CD@PA\fi\fi\fi\endgroup}\CD@tG\CD@xE\CD@JD\CD@ID\CD@rG\CD@xI{See the message
above.}\CD@rG\CD@lH{Perhaps you've forgotten to end the diagram before
resuming the text, in\CD@uG which case some garbage may be added to the
diagram, but we should be ok now.\CD@uG Alternatively you've left a blank line
in the middle - TeX will now complain\CD@uG that the remaining \CD@S s are
misplaced - so please use comments for layout.}\CD@rG\CD@hD{You have already
closed too many brace pairs or environments; an \CD@HD\CD@uG command was (%
over)due.}\CD@rG\CD@hH{\CD@dC\space and \CD@HD\space commands must match.}%
\def\CD@jH{\ifnum\inputlineno=0 \else\expandafter\CD@iH\fi}\def\CD@iH{\CD@MD
\CD@GD\crcr\CD@YA{missing \CD@HD\space inserted before \CD@kH- type "h"}%
\CD@lH\enddiagram\CD@AG\CD@kH\par}\def\CD@AG#1{\edef\enddiagram{\noexpand
\CD@rD{#1\CD@W}}}\def\CD@rD#1{\CD@YA{\CD@HD\space(anticipated by #1) ignored}%
\CD@xI\let\enddiagram\CD@SG}\def\CD@SG{\CD@YA{misplaced \CD@HD\space ignored}%
\CD@hH}\def\CD@mC{\CD@YA{missing \CD@HD\space inserted.}\CD@hD\CD@AG{closing
group}}\ifx\DeclareOption\CD@qK\else\ifx\DeclareOption\@notprerr\else
\def\vboxtoz{\vbox to\z@}
\def\scriptaxis#1{\@scriptaxis{$\scriptstyle#1$}}
\def\ssaxis#1{\@ssaxis{$\scriptscriptstyle#1$}}
\def\@scriptaxis#1{\dimen0\axisheight\advance\dimen0-\ss@axisheight\raise
\dimen0\hbox{#1}}\def\@ssaxis#1{\dimen0\axisheight\advance\dimen0-%
\ss@axisheight\raise\dimen0\hbox{#1}}
\let\boldscriptaxis\scriptaxis
\def\boldscript#1{\hbox{$\scriptstyle#1$}}
\def\boldscriptaxis#1{\@scriptaxis{\boldmath$\scriptstyle#1$}}
\def\boldscript#1{\hbox{\boldmath$\scriptstyle#1$}}
\def\raisehook#1#2#3{\hbox{\setbox3=\hbox{#1$\scriptscriptstyle#3$}%
\dimen0\ss@axisheight
\dimen1\axisheight\advance\dimen1-\dimen0
\dimen2\ht3\advance\dimen2-\dimen0%
\advance\dimen2-0.021em\advance\dimen1 #2\dimen2%
\raise\dimen1\box3}}
\def\shifthook#1#2#3{\setbox1=\hbox{#1$\scriptscriptstyle#3$}\dimen0\wd1%
\divide\dimen0 12\CD@zH{\dimen0}
\dimen1\wd1\advance\dimen1-2\dimen0 \advance\dimen1-2\CD@oI\CD@zH{\dimen1}%
\kern#2\dimen1\box1}
\def\@cmex{\mathchar"03}
\def\make@pbk#1{\setbox\tw@\hbox to\z@{#1}\ht\tw@\z@\dp\tw@\z@\box\tw@}\def
\def\CD@qH{\kern0.11em}\def\CD@pH{\kern0%
.35em}
\def\dblvert{\def\CD@rH{\kern.5\PileSpacing}}\def\CD@rH{}
\def\SEpbk{\make@pbk{\CD@qH\CD@rH\vrule depth 2.87ex height -2.75ex width 0.%
95em \vrule height -0.66ex depth 2.87ex width 0.05em \hss}}
\def\SWpbk{\make@pbk{\hss\vrule height -0.66ex depth 2.87ex width 0.05em
\vrule depth 2.87ex height -2.75ex width 0.95em \CD@qH\CD@rH}}
\def\NEpbk{\make@pbk{\CD@qH\CD@rH\vrule depth -3.81ex height 4.00ex width 0.%
95em \vrule height 4.00ex depth -1.72ex width 0.05em \hss}}
\def\NWpbk{\make@pbk{\hss\vrule height 4.00ex depth -1.72ex width 0.05em
\vrule depth -3.81ex height 4.00ex width 0.95em \CD@qH\CD@rH}}
\def\puncture{{\setbox0\hbox{A}\vrule height.53\ht0 depth-.47\ht0 width.35\ht
0 \kern.12\ht0 \vrule height\ht0 depth-.65\ht0 width.06\ht0 \kern-.06\ht0
\vrule height.35\ht0 depth0pt width.06\ht0 \kern.12\ht0 \vrule height.53\ht0
depth-.47\ht0 width.35\ht0 }}
\def\NEclck{\overprint{\raise2.5ex\rlap{ \CD@rH$\scriptstyle\searrow$}}}
\def\NEanti{\overprint{\raise2.5ex\rlap{ \CD@rH$\scriptstyle\nwarrow$}}}
\def\NWclck{\overprint{\raise2.5ex\llap{$\scriptstyle\nearrow$ \CD@rH}}}
\def\NWanti{\overprint{\raise2.5ex\llap{$\scriptstyle\swarrow$ \CD@rH}}}
\def\SEclck{\overprint{\lower1ex\rlap{ \CD@rH$\scriptstyle\swarrow$}}}
\def\SEanti{\overprint{\lower1ex\rlap{ \CD@rH$\scriptstyle\nearrow$}}}
\def\SWclck{\overprint{\lower1ex\llap{$\scriptstyle\nwarrow$ \CD@rH}}}
\def\SWanti{\overprint{\lower1ex\llap{$\scriptstyle\searrow$ \CD@rH}}}
\def\rhvee{\mkern-10mu\greaterthan}
\def\lhvee{\lessthan\mkern-10mu}
\def\dhvee{\vboxtoz{\vss\hbox{$\vee$}\kern0pt}}
\def\uhvee{\vboxtoz{\hbox{$\wedge$}\vss}}
\def\dhlvee{\vboxtoz{\vss\hbox{$\scriptstyle\vee$}\kern0pt}}
\def\uhlvee{\vboxtoz{\hbox{$\scriptstyle\wedge$}\vss}}
\def\dhblvee{\vboxtoz{\vss\boldscript\vee\kern0pt}}
\def\uhblvee{\vboxtoz{\boldscript\wedge\vss}}
\def\rhcvee{\mkern-10mu\succ}
\def\lhcvee{\prec\mkern-10mu}
\def\dhcvee{\vboxtoz{\vss\hbox{$\curlyvee$}\kern0pt}}
\def\uhcvee{\vboxtoz{\hbox{$\curlywedge$}\vss}}
\def\rhvvee{\mkern-13mu\gg}
\def\lhvvee{\ll\mkern-13mu}
\def\dhvvee{\vboxtoz{\vss\hbox{$\vee$}\kern-.6ex\hbox{$\vee$}\kern0pt}}
\def\uhvvee{\vboxtoz{\hbox{$\wedge$}\kern-.6ex \hbox{$\wedge$}\vss}}
\def\rhtriangle{\triangleright\mkern1.2mu}
\def\lhtriangle{\triangleleft\mkern.8mu}
\def\uhtriangle{\vbox{\kern-.2ex \hbox{$\scriptscriptstyle\bigtriangleup$}%
\kern-.25ex}}
\def\dhtriangle{\vbox{\kern-.28ex \hbox{$\scriptscriptstyle\bigtriangledown$}%
\kern-.1ex}}
\def\dhblack{\vbox{\kern-.25ex\nointerlineskip\hbox{$\blacktriangledown$}}}%
\def\uhblack{\vbox{\kern-.25ex\nointerlineskip\hbox{$\blacktriangle$}}}%
\def\dhlblack{\vbox{\kern-.25ex\nointerlineskip\hbox{$\scriptstyle
\blacktriangledown$}}}
\def\uhlblack{\vbox{\kern-.25ex\nointerlineskip\hbox{$\scriptstyle
\blacktriangle$}}}
\uhblack\newarrowhead{littleblack}{\mkern-1mu%
\scriptaxis\blacktriangleright}{\scriptaxis\blacktriangleleft\mkern-2mu}%
\def\rhla{\hbox{\setbox0=\lnchar55\dimen0=\wd0\kern-.6\dimen0\ht0\z@\raise
\axisheight\box0\kern.1\dimen0}}
\def\lhla{\hbox{\setbox0=\lnchar33\dimen0=\wd0\kern.05\dimen0\ht0\z@\raise
\axisheight\box0\kern-.5\dimen0}}
\def\dhla{\vboxtoz{\vss\rlap{\lnchar77}}}
\def\uhla{\vboxtoz{\setbox0=\lnchar66 \wd0\z@\kern-.15\ht0\box0\vss}}
\def\lhlala{\lhla\kern.3em\lhla}
\def\rhlala{\rhla\kern.3em\rhla}
\def\uhlala{\hbox{\uhla\raise-.6ex\uhla}}
\def\dhlala{\hbox{\dhla\lower-.6ex\dhla}}
\def\hhO{\scriptaxis\bigcirc\mkern.4mu} \def\hho{{\circ}\mkern1.2mu}%
\hhO\hhO{\scriptstyle\bigcirc}{\scriptstyle\bigcirc}
\def\rhtimes{\mkern-5mu{\times}\mkern-.8mu}\def\lhtimes{\mkern-.8mu{\times}%
\mkern-5mu}\def\uhtimes{\setbox0=\hbox{$\times$}\ht0\axisheight\dp0-\ht0%
\lower\ht0\box0 }\def\dhtimes{\setbox0=\hbox{$\times$}\ht0\axisheight\box0 }%
\Rightarrow\Leftarrow{\@cmex7F}{\@cmex7E}
\def\twoheaddownarrow{\rlap{$\downarrow$}\raise-.5ex\hbox{$\downarrow$}}
\def\twoheaduparrow{\rlap{$\uparrow$}\raise.5ex\hbox{$\uparrow$}}
\def\ltvee{\mkern-1mu{\lessthan}\mkern.4mu}
\else\newarrowtail{%
boldlittlevee}{\boldscriptaxis\greaterthan}{\mkern-1mu\boldscriptaxis
\lessthan}{\boldscript\vee}{\boldscript\wedge}\fi
\def\rttriangle{\mkern1.2mu\triangleright}
\uhblack\newarrowtail{littleblack}{\scriptaxis
\blacktriangleright\mkern-2mu}{\mkern-1mu\scriptaxis\blacktriangleleft}%
\def\rtla{\hbox{\setbox0=\lnchar55\dimen0=\wd0\kern-.5\dimen0\ht0\z@\raise
\axisheight\box0\kern-.2\dimen0}}
\def\ltla{\hbox{\setbox0=\lnchar33\dimen0=\wd0\kern-.15\dimen0\ht0\z@\raise
\axisheight\box0\kern-.5\dimen0}}
\def\dtla{\vbox{\setbox0=\rlap{\lnchar77}\dimen0=\ht0\kern-.7\dimen0\box0%
\kern-.1\dimen0}}
\def\utla{\vbox{\setbox0=\rlap{\lnchar66}\dimen0=\ht0\kern-.1\dimen0\box0%
\kern-.6\dimen0}}
\def\rtvvee{\gg\mkern-3mu}
\def\ltvvee{\mkern-3mu\ll}
\def\dtvvee{\vbox{\hbox{$\vee$}\kern-.6ex \hbox{$\vee$}\vss}}
\def\utvvee{\vbox{\vss\hbox{$\wedge$}\kern-.6ex \hbox{$\wedge$}\kern\z@}}
\def\ltlala{\ltla\kern.3em\ltla}
\def\rtlala{\rtla\kern.3em\rtla}
\def\utlala{\hbox{\utla\raise-.6ex\utla}}
\def\dtlala{\hbox{\dtla\lower-.6ex\dtla}}
\def\utbar{\vrule height 0.093ex depth0pt width 0.4em}
\let\dtbar\utbar
\def\rtbar{\mkern1.5mu\vrule height 1.1ex depth.06ex width .04em\mkern1.5mu}%
\let\ltbar\rtbar
\def\rthooka{\raisehook{}+\subset\mkern-1mu}
\def\lthooka{\mkern-1mu\raisehook{}+\supset}
\def\rthookb{\raisehook{}-\subset\mkern-2mu}
\def\lthookb{\mkern-1mu\raisehook{}-\supset}
\def\dthooka{\shifthook{}+\cap}
\def\dthookb{\shifthook{}-\cap}
\def\uthooka{\shifthook{}+\cup}
\def\uthookb{\shifthook{}-\cup}
\uthooka\newarrowtail{hookb}%
\CD@qK\newarrowtail{boldhooka}\rthooka\lthooka\dthooka\uthooka
\uthookb\newarrowtail{%
boldhook}\rthooka\lthooka\dthookb\uthooka\else\def\rtbhooka{\raisehook
\boldmath+\subset\mkern-1mu}
\def\ltbhooka{\mkern-1mu\raisehook\boldmath+\supset}
\def\rtbhookb{\raisehook\boldmath-\subset\mkern-2mu}
\def\ltbhookb{\mkern-1mu\raisehook\boldmath-\supset}
\def\dtbhooka{\shifthook\boldmath+\cap}
\def\dtbhookb{\shifthook\boldmath-\cap}
\def\utbhooka{\shifthook\boldmath+\cup}
\def\utbhookb{\shifthook\boldmath-\cup}
\utbhooka\newarrowtail{%
boldhookb}\rtbhookb\ltbhookb\dtbhookb\utbhookb\newarrowtail{boldhook}%
\def\dtsqhooka{\shifthook{}+\sqcap}
\def\ltsqhooka{\mkern-1mu\raisehook{}+\sqsupset}
\def\rtsqhooka{\raisehook{}+\sqsubset\mkern-1mu}
\def\utsqhooka{\shifthook{}+\sqcup}
\uthooka\newarrowtail{C}\rthooka
\hhO\hhO{\scriptstyle\bigcirc}{\scriptstyle\bigcirc}
\Leftarrow\Rightarrow{\@cmex7E}{\@cmex7F}
\def\vfthree{\mid\!\!\!\mid\!\!\!\mid}
\def\vfdashstrut{\vrule width0pt height1.3ex depth0.7ex}
\def\vfthedash{\vrule width\CD@LF height0.6ex depth 0pt}
\def\hfthedash{\CD@AJ\vrule\horizhtdp width 0.26em}
\def\hfdash{\mkern5.5mu\hfthedash\mkern5.5mu}
\def\vfdash{\vfdashstrut\vfthedash}
\def\rightBrace{\d@brace[thick,cmex]}
\def\leftBrace{\u@brace[thick,cmex]}
\def\upperBrace{\r@brace[thick,cmex]}
\def\lowerBrace{\l@brace[thick,cmex]}
\def\rightParenth{\d@parenth[thick,cmex]}
\def\leftParenth{\u@parenth[thick,cmex]}
\def\upperParenth{\r@parenth[thick,cmex]}
\def\lowerParenth{\l@parenth[thick,cmex]}
\let\hEq\rEq
\let\vEq\uEq
\def\labelstyle{
\ifincommdiag
\textstyle
\else
\scriptstyle
\fi}
\let\objectstyle\displaystyle
\CD@hK\message{| running in pdf mode -- diagonal arrows will work
automatically |}\else\message{| >>>>>>>> POSTSCRIPT MODE (DVIPS) IS NOW THE
DEFAULT <<<<<<<<<<<<|}\message{|(DVI mode has not been supported since 1992
\else\message{| >>>>>>>> USING UGLY
OBSOLETE DVI CODE - PLEASE STOP <<<<<<<<<<<<|}\message{|(DVI mode has not been
\DeclareMathOperator\C{\mathbb C}
\DeclareMathOperator\Z{\mathbb Z}
\newtheorem{theorem}{Theorem}[section]
\newtheorem{lemma}[theorem]{Lemma}
\newtheorem{cor}[theorem]{Corollary}
\newtheorem{prop}[theorem]{Proposition}
\theoremstyle{definition}
\newtheorem{definition}[theorem]{Definition}
\newtheorem{example}[theorem]{Example}
\theoremstyle{remark}
\numberwithin{equation}{section}
\newcommand{\dontprint}[1]\relax
\newcommand{\De}{\Delta}
\newcommand{\La}{\Lambda}
\newcommand{\Aut}{\operatorname{Aut}}
\newcommand{\und}{\underline}
\newcommand{\hra}{\hookrightarrow}
\renewcommand{\P}{{\mathbb P}}
\newcommand{\A}{{\mathbb A}}
\newcommand{\wt}{\widetilde}
\newcommand{\ot}{\otimes}
\newcommand{\Hom}{\operatorname{Hom}}
\newcommand{\Ext}{\operatorname{Ext}}
\renewcommand{\SS}{{\mathcal S}}
\newcommand{\FF}{{\mathcal F}}
\newcommand{\LL}{{\mathcal L}}
\newcommand{\OO}{{\mathcal O}}
\newcommand{\UU}{{\mathcal U}}
\newcommand{\de}{\delta}
\newcommand{\sub}{\subset}
\newcommand{\Spec}{\operatorname{Spec}}
\newcommand{\ov}{\overline}
\newcommand{\om}{\omega}
\newcommand{\la}{\lambda}
\renewcommand{\a}{\alpha}
\newcommand{\id}{\operatorname{id}}
\newcommand{\GL}{\operatorname{GL}}
\newcommand{\G}{{\mathbb G}}
\newcommand{\ga}{\gamma}
\newcommand{\lan}{\langle}
\newcommand{\ran}{\rangle}
\newcommand{\SL}{{\operatorname{SL}}}
\newcommand{\XX}{{\mathcal X}}
\newcommand{\YY}{{\mathcal Y}}
\newcommand{\fg}{{\mathfrak g}}
\newcommand{\Bun}{{\operatorname{Bun}}}
\newcommand{\End}{{\operatorname{End}}}
\newcommand{\vol}{{\operatorname{vol}}}
\newcommand{\PGL}{{\operatorname{PGL}}}
\renewcommand{\Re}{{\operatorname{Re}}}
\newcommand{\R}{{\mathbb R}}
\newcommand{\ad}{{\operatorname{ad}}}
\title{$L^2$-property for algebraic stacks over local non-archimedean fields}
\author{David Kazhdan}
\author{Alexander Polishchuk}
\thanks{D.K. is partially supported by the ERC grant No 101142781.
A.P. is partially supported by the NSF grant DMS-2349388, by the Simons Travel grant MPS-TSM-00002745,
and within the framework of the HSE University Basic Research Program}
\address{Einstein Institute of Mathematics,
The Hebrew University of Jerusalem,
Jerusalem 91904, Israel}
\email{kazhdan@math.huji.ac.il}
\address{
    Department of Mathematics, 
    University of Oregon, 
    Eugene, OR 97403, USA; National Research University Higher School of Economics  }
  \email{apolish@uoregon.edu}
\begin{document}

\begin{abstract} We introduce an $L^2$-norm on the space of Schwartz half-densities over algebraic stacks over local non-archimedean fields. We show that these $L^2$-norms are finite
for the stacks of $\PGL_2$-bundles on $\P^1$ with parabolic structures at $\ge 3$ points. The latter property was conjectured in the context of the analytic Langlands correspondence
of \cite{EFK1,EFK2}.
\end{abstract}

\maketitle

\section{Introduction}

Let $K$ be a local nonarchimedean field. For a smooth algebraic stack $\XX$ over $K$ one has naturally defined Schwartz spaces of $\kappa$-densities
$\SS(\XX(K),|\om|^{\kappa})$ (where $\kappa\in\C$), see \cite{GK}, \cite{BKP2}. Our interest to these spaces is due to their role in the analytic Langlands correspondence.
Namely, in the case of the stack $\Bun_G$ of $G$-bundles over a curve $C$ over $K$, where $G$ is a semisimple algebraic group, for $\kappa=1/2$, the spaces $\SS(\Bun_G(K),|\om|^{1/2})$ are
equipped with the action of Hecke operators, which form a commutative algebra of operators. Conjecturally, the space $\SS(\Bun_G(K),|\om|^{1/2})$ has
a basis of Hecke eigenfunctions, and there is an extension of Hecke operators to compact operators on a certain $L^2$-space containing $\SS(\Bun_G(K),|\om|^{1/2})$ (see \cite{EFK1}, \cite{BK}).

This picture also makes sense for bundles with parabolic structures, and in the case $G=\PGL_2$, for bundles on $\P^1$ with parabolic structures at fixed $N$ points, where $N\ge 3$,
the extension of the Hecke operators to compact operators on the $L^2$-space of $1/2$-densities on the stable locus was constructed in \cite{EFK2}. The goal of the present paper
is to prove that for $N\ge 3$, every element of the Schwartz space $\SS(\Bun_{\PGL_2}(\P^1,p_1,\ldots,p_N)(K),|\om|^{1/2})$ defines an element of this $L^2$-space (see Theorem A below). 
Here $\Bun_{\PGL_2}(\P^1,p_1,\ldots,p_N)$ is the moduli stack of $\PGL_2$-bundles over $\P^1$ with parabolic structures at $p_1,\ldots,p_N$.

Recall (see \cite{GK}, \cite{BK}) that for quotient stacks $\XX=[X/H]$ (say, with $H=\GL_m$) the Schwartz space $\SS(\XX(K),|\om|^{\kappa})$ is defined as the coinvariants space $\SS(X(K),|\om|^{\kappa})_{H(K)}$. 
For the stacks like $\Bun_G$, one defines the Schwartz spaces as inductive limits of the Schwartz spaces of open substacks of finite type.
For $G=\SL_2$ we showed in \cite{BKP} that for $\Re(\kappa)\ge 1/2$, there is a natural map
$$\pi_{\kappa}:\SS(\Bun_G(K),|\om|^{\kappa})\to C^\infty(\Bun_G^{vs}(K),|\om|^{\kappa}),$$
where $\Bun_G^{vs}$ is the locus of very stable bundles in the coarse moduli space (the map is given by converging integrals over orbits using quotient presentations of open substacks of $\Bun_G$).

The arguments of \cite{BKP} work also in the case of $G=\PGL_2$ and for bundles with parabolic structures at fixed points, see Theorem \ref{vs-smooth-thm} below, where we construct
the parabolic analog of the maps $\pi_{\kappa}$ for $\Re(\kappa)\ge 1/2$. 

In the case $\kappa=1/2+is$, there is a natural $L^2$-norm on the space of $(1/2+is)$-densities on
a $K$-variety (see Sec.\ \ref{L2-def-sec}). 
In particular, we can consider this $L^2$-norm over the moduli of very stable $\PGL_2$-bundles. Now we can formulate our main theorem.


\bigskip

\noindent
{\bf Theorem A}. {\it Fix $N\ge 3$ and $s\in \R$. Assume that the characteristic of $K$ is $\neq 2$.
Then for any $\varphi\in \SS(\Bun_{\PGL_2}(\P^1,p_1,\ldots,p_N)(K),|\om|^{\frac{1}{2}+is})$, the smooth twisted density
$\pi_{1/2+is}(\varphi)$ on the very stable locus $\Bun_{\PGL_2}^{vs}(\P^1,p_1,\ldots,p_N)$ is of class $L^2$.
}

\bigskip

The notion of an {\it admissible stack} was introduced in \cite{GK} (see also Sec.\ \ref{L2-def-sec} below).
We introduce an analog of the $L^2$-norm on the space $\SS(\XX(K),|\om|^{1/2})$ for any admissible algebraic stack $\XX$ over $K$ (possibly of infinite type)
and define the class of $L^2$ stacks as those for which the $L^2$-norm of all elements of $\SS(\XX(K),|\om|^{1/2})$ is finite. This property is relatively easy to study for the stacks of the form $[V/G]$, where
$V$ is a finite-dimensional representation of a reductive group $G$. In particular, in this way we check that the $L^2$-property holds for
the stacks $[(\P^1)^r/G]$, with $G=\SL_2$ or $G=\PGL_2$, for $r\ge 3$. Then we use the results of \cite{EFK2} on Hecke operators to deduce that the $L^2$-property holds for
$\Bun_{\PGL_2}(\P^1,p_1,\ldots,p_N)$, where $N\ge 3$, thus, proving Theorem A.

We expect the similar result to Theorem A to hold for higher genus (with $N\ge 1$ for genus $1$). 
In the case of genus $1$ and $N\ge 1$ the proof should be analogous to that of Theorem A, using the results of \cite{Klyuev}.
In the case of a curve $C$ of genus $\ge 2$ and no parabolic points, the neighborhood
of the trivial bundle in the moduli space $\Bun_G(C)$ is modeled \'etale locally by the quotient stack $[\fg^g/G]$, where $\fg$ is the adjoint representation.
We check in Proposition \ref{adjoint-prop} that this quotient is an $L^2$-stack for $g\ge 2$ for any split semisimple $G$. This essentially reduces the problem of checking
that $\Bun_G(C)$ is $L^2$ to checking that the Hecke operators are bounded with respect to the $L^2$-norm.

The study of examples associated with linear actions suggests that the $L^2$-property may be related to the {\it very good} property of algebraic stacks introduced by Beilinson and Drinfeld
in \cite{BD}. Namely, we conjecture that the very good property (known to hold for $\Bun_G(C)$ for genus $\ge 2$) implies the $L^2$-property
(see Conjecture B and discussion in Sec.\ \ref{L2-def-sec} and Sec.\ \ref{vg-sec}). 

We expect that our results are also valid in the archimedean case (see \cite{Sak} and \cite{BKP} for the relevant definitions in the archimedean case).
In the case $K=\C$ it was proved in \cite{AT} using separation of variables that Hecke eigenfunctions for $\Bun_{\PGL_2}(\P^1,p_1,\ldots,p_N)$ (where $N\ge 3$) are of class $L^2$.

Note that in the case of stacks of bundles over $C$, for which semistability and stability coincide (e.g., for rank $2$ bundles with fixed determinant of odd degree),
the $L^2$ property would follow from the conjectural behavior of Schwartz half-densities near stable bundles, see \cite[Conj.\ 3.5]{BK} and \cite[Conj. A and A']{KP}.
In particular, the results of \cite{KP} imply the $L^2$ property for the stack of rank $2$ bundles with fixed determinant of odd degree when the field $K$ has characteristic zero
and either $C$ has genus $2$ or $C$ is non-hyperelliptic of genus $3$.



\bigskip

\noindent
{\it Convention}. Whenever working with $\PGL_2$-bundles, we assume that the characteristic of $K$ is $\neq 2$.

\medskip

\noindent
{\it Acknowledgment}. We thank Pavel Etingof for helpful discussions.

\section{$L^2$-stacks}

\subsection{Positive sections of local systems on $X(K)$ associated with line bundles}

Let $X$ be a smooth variety over $K$. We can consider complex/real local systems over the topological space $X(K)$, i.e., locally trivial sheaves of
complex/real vector spaces. For such a local system $\LL$, we denote by $C^\infty(X(K),\LL)$ the space of smooth (i.e., locally constant) sections, and by 
$\SS(X(K),\LL)\sub C^\infty(X(K),\LL)$
the subspace of sections with compact support.
 
For a line bundle $L$ on $X$ and a complex number $\kappa$, we denote by $|L|^{\kappa}$ the rank $1$ complex local system on $X(K)$,
corresponding to the $\C^*$-torsor (with respect to the natural topology on $X(K)$) obtained
as the push-out with respect to the homomorphism 
\begin{equation}\label{chi-kappa-eq}
\chi_\kappa:K^*\to \C^*:x\mapsto |x|^{\kappa}
\end{equation}
from the $K^*$-torsor associated with $L$. 

An isomorphism of line bundles $\a:L_1\to L_2$ induces an isomorphism of local systems
$$|\a|^{\kappa}:|L_1|^{\kappa}\to |L_2|^{\kappa}.$$
In particular, a nonvanishing section $s$ of a line bundle $L$ gives a trivializing section $|s|^{\kappa}$ of $|L|^{\kappa}$.

Note since $\chi_{\ov{\kappa}}$ is the composition of $\chi_{\kappa}$ with the complex conjugation on $\C^*$, we have a well defined $\C$-antilinear conjugation map
$$|L|^{\kappa}\to |L|^{\ov{\kappa}}:\varphi\mapsto \ov{\varphi},$$
which is an isomorphism of sheaves of $\R$-vector spaces.

\begin{definition} A {\it positive} rank $1$ real local system on $X(K)$ is a rank $1$ real local system $\LL_\R$ with a fixed identification of the corresponding
$\R^*$-torsor with the push-out of an $\R_{>0}$-torsor.
\end{definition} 

In other words, a positive rank $1$ real local system is given by positive transition functions. For such a local system $\LL_\R$
we have a well defined notion of positivity (or non-negativity) for sections of $\LL_{\R}$, satisfying the following natural properties (where $U\sub X(K)$ is an open subset):
\begin{itemize}
\item
If $\varphi_1\ge 0$, $\varphi_2\ge 0$ for $\varphi_1,\varphi_2\in \LL_\R(U)$ then $\varphi_1+\varphi_2\ge 0$.
\item
If $\LL_\R$ and $\LL'_\R$ are positive rank $1$ real local systems then so is $\LL_\R\ot_{\R} \LL'_\R$ and for $\varphi\in \LL_\R(U)$,
$\varphi'\in \LL'_\R(U)$, if $\varphi\ge 0$ and $\varphi'\ge 0$ then $\varphi\cdot \varphi'\ge 0$.
\item 
If $\LL_\R$ is a positive rank $1$ real local system then so is $\LL^{-1}_\R$, so that the induced positive structure on the trivial local system $\LL_\R\ot_{\R} \LL^{-1}_\R$
is the standard one.
\item 
If $f:X\to Y$ is a morphism and $\LL_\R$ is a positive local system on $Y$ then $f^*\LL_\R$ has a natural positive structure, and
the pullback map $f^*:\LL_\R(U)\to \LL_\R(f^{-1}U)$ preserves positivity: if $\varphi\ge 0$ then $f^*\varphi\ge 0$.
\item
Similar properties hold for strict inequalities.
\end{itemize}

For $s\in \R$, the character $\chi_s$ factors through $\R_{>0}\sub \C^*$, which means that $|L|^s$ is associated with the push-out from an $\R_{>0}$-torsor.
We denote by $|L|^s_{\R}$ the corresponding positive rank $1$ real local system on $X(K)$. 

For any rank $1$ complex local system $\LL$, we have the corresponding rank $1$ positive local system $|\LL|_{\R}$, obtained from $\LL$ as the push-out via $z\mapsto |z|$.
We have a well defined map of sheaves of sets
$$\LL\to |\LL|_{\R}:\varphi\mapsto |\varphi|,$$
such that $|\varphi|\ge 0$, $|f\varphi|=|f|\cdot |\varphi|$ for a function $f$, and for $\varphi,\varphi'\in C^\infty(X(K),\LL)$ one has the triangle inequality
$|\varphi+\varphi'|\le |\varphi|+|\varphi'|$.

Note that for $\LL=|L|^{\kappa}$, where $L$ is a line bundle on $X$, and $\kappa\in\C$, we have an identification
$$||L|^{\kappa}|_{\R}\simeq |L|^{\Re(\kappa)}_{\R}.$$
More generally, for line bundles $L_1,L_2,\ldots$ and complex numbers $\kappa_1,\kappa_2,\ldots$, we have 
$$||L_1|^{\kappa_1}\ot |L_2|^{\kappa_2}\ldots|_{\R}\simeq |L_1|^{\Re(\kappa_1)}_{\R}\ot_{\R} |L_2|^{\Re(\kappa_2)}_\R\ot\ldots,$$
and for local nonvanishing sections $s_i$ of $L_i$ one has 
$$||s_1|^{\kappa_1}\cdot |s_2|^{\kappa_2}\ldots|=|s_1|^{\Re(\kappa_1)|}\cdot |s_2|^{\Re(\kappa_2)}\ldots$$

From now on we will omit the subscript $\R$ when talking about real local systems.
We say that an isomorphism of positive rank 1 real local systems $\a:\LL_1\to \LL_2$ is positive if it such as a section of $\LL_1^{-1}\ot \LL_2$.
Such an isomorphism is compatible with the notion of positivity: if $\varphi\ge 0$ then $\a(\varphi)\ge 0$.
For example, for $s\in \R$, the map $|\a|^s$ induced by an isomorphism of line bundles $\a:L_1\to L_2$ is positive.
 
\subsection{Integration}

As is well known, sections of
$|\om_X|$ can be viewed as densities on $X(K)$, so sections of $\SS(X(K),|\om_X|)$ can be integrated. More generally,
for $\nu\in C^\infty(X(K),|\om_X|)$ such that $\nu\ge 0$, we have a well defined integral $\int_{X(K)}\nu\in [0,+\infty]$.

For arbitrary $\nu\in C^\infty(X(K),|\om_X|)$, we say that the integral $\int_{X(K)}\nu$ converges absolutely if $\int_{X(K)}|\nu|<\infty$.
We have $\int_{X(K)}\ov{\nu}=\ov{\int_{X(K)}\nu}$.

Let $f:X\to Y$ be a smooth morphism, and let $\LL$ be a rank 1 local system on $Y(K)$. 
We have a natural push-forward map
$$f_*:\SS(X(K),f^*\LL\ot |\om_f|)\to \SS(Y(K),\LL)$$
given by 
\begin{equation}\label{push-forward-def}
f_*\varphi(y)=\int_{f^{-1}(y)}\varphi|_{f^{-1}(y)}.
\end{equation}

\begin{definition}
For $\varphi\in C^\infty(f^*\LL\ot |\om_f|)$, we say that $f_*\varphi$ is {\it defined (or converges) as a distribution}, if for every $\psi\in \SS(Y(K),\LL^{-1}\ot |\om_Y|)$,
the integral $\lan f_*\varphi,\psi\ran=\int_{X(K)}\varphi\cdot f^*\psi$ converges absolutely, and hence defines a twisted distribution $f_*\varphi$.
Note that this is equivalent to $f_*|\varphi|$ converging as a distribution, where $|\varphi|\in C^\infty(f^*|\LL|\ot |\om_f|)$.
\end{definition}

\begin{lemma}\label{integration-lem} Let $f:X\to Y$ be a smooth morphism, and let $\LL$ be a rank 1 local system on $Y(K)$. 

\noindent
(i) Assume $\LL$ is positive, $\varphi\in C^\infty(X(K),f^*\LL\ot |\om_f|)$ is such that $\varphi\ge 0$, $f_*(\varphi)$ is defined as a distribution, and $f_*(\varphi)$ is of class $C^\infty$.
Then one has $f_*(\varphi)\ge 0$.
Furthermore, $f_*(\varphi)>0$ over the image of the locus where $\varphi>0$.

\noindent
(ii) Assume that  $\LL$ is positive and $f$ is surjective on $K$-points.
Then for any $\varphi\in \SS(Y(K),\LL)$, $\varphi\ge 0$, there exists $\psi\in \SS(X(K),f^*\LL\ot |\om_f|)$, such that $\psi\ge 0$ and $f_*\psi=\varphi$.


\noindent
(iii) Let $u:Y'\to Y$ be a smooth morphism, so that we have a cartesian diagram
\begin{diagram}
X'&\rTo{u'}&X\\
\dTo{f'}&&\dTo{f}\\
Y'&\rTo{u}&Y
\end{diagram}
Suppose $\varphi\in C^\infty(f^*\LL\ot |\om_f|)$ is such that $f_*\varphi$ converges as a distribution, then $f'_*(u')^*\varphi$ is also defined as a distribution,
and
$$f'_*(u')^*\varphi=u^*f_*\varphi.$$

\noindent
(iv) Let $g:Z\to X$ be a smooth morphism, and let $\varphi\in C^\infty(g^*(f^*\LL\ot |\om_f|)\ot |\om_g|)$ be such that $g_*\varphi$ converges as a distribution to an element in 
$C^\infty(f^*\LL\ot |\om_f|)$.
Then $f_*(g_*|\varphi|)$ converges as a distribution if and only if $(f\circ g)_*|\varphi|$ converges as a distribution, and in this case
$$f_*(g_*\varphi)=(f\circ g)_*(\varphi).$$
\end{lemma}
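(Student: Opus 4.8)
The plan is to reduce everything to the pointwise definition of the fiber integral \eqref{push-forward-def} together with Fubini/Tonelli, and to treat the four parts in the order (i), (ii), (iv), (iii), since (ii) relies on (i) and the base-change statement (iii) is most naturally read off from the iterated-integral bookkeeping used for (iv).

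For (i) I would argue fiberwise. Fix $y\in Y(K)$. The restriction of the positive local system $f^*\LL$ to the fiber $f^{-1}(y)$ is the constant local system with fiber $\LL_y$, which inherits a positive structure (restriction is a pullback, and pullback preserves positivity); hence $\varphi|_{f^{-1}(y)}$ is a non-negative $\LL_y$-valued density on $f^{-1}(y)$, and its integral $f_*\varphi(y)$ lies in the positive ray of $\LL_y$, i.e.\ $f_*\varphi(y)\ge 0$. For strict positivity, if $\varphi(x)>0$ then, $\varphi$ being locally constant, $\varphi>0$ on a non-empty compact open subset of the fiber over $y=f(x)$; since $\varphi\ge 0$ on the whole fiber, $f_*\varphi(y)$ dominates the integral over this subset, which is strictly positive because a non-empty open subset of a $K$-analytic manifold has strictly positive density-measure.

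For (ii) I would first record the projection formula: for a smooth compactly supported function $g$ on $Y(K)$ one has $f_*(\psi_0\cdot f^*g)=g\cdot f_*\psi_0$, immediate from \eqref{push-forward-def} since $f^*g$ is constant on each fiber. The construction then has two steps. First, build an auxiliary $\psi_0\in\SS(X(K),f^*\LL\ot|\om_f|)$ with $\psi_0\ge 0$ and $f_*\psi_0>0$ on $\supp\varphi$: for each $y\in\supp\varphi$ use surjectivity on $K$-points to pick $x\in f^{-1}(y)$, choose a compact open $W\ni x$ on which $f^*\LL$ is trivialized by a positive section, let $\psi_0$ be a positive relative density there extended by zero; by (i), $f_*\psi_0>0$ on the open image $f(W)$, and a finite sum over a cover of the compact set $\supp\varphi$ yields the required $\psi_0$. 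Second, set $g=\varphi/f_*\psi_0$ on the neighborhood of $\supp\varphi$ where $f_*\psi_0>0$, cut off to a smooth compactly supported non-negative function, and put $\psi=\psi_0\cdot f^*g$. Then $\psi\ge 0$, and the projection formula gives $f_*\psi=g\cdot f_*\psi_0=\varphi$.

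Parts (iv) and (iii) are Fubini statements. For (iv), since $|\varphi|\ge 0$, Tonelli's theorem applied to iterated integration over the fibers of $g$ and then of $f$, versus integration over the fibers of $f\circ g$, shows that $f_*(g_*|\varphi|)$ and $(f\circ g)_*|\varphi|$ have equal (possibly infinite) values, so one converges as a distribution iff the other does; the identity $f_*(g_*\varphi)=(f\circ g)_*\varphi$ then follows by Fubini for the now absolutely convergent complex integrand, using the factorization $|\om_{f\circ g}|\simeq g^*|\om_f|\ot|\om_g|$ of relative densities. For (iii), the geometric input is that under base change the relative density pulls back, $(u')^*|\om_f|\simeq|\om_{f'}|$, and the induced map $(f')^{-1}(y')\to f^{-1}(u(y'))$ is an isomorphism of $K$-analytic manifolds matching the $\LL$-identifications; hence the fiber integrals computing $f'_*(u')^*\varphi$ at $y'$ and $f_*\varphi$ at $u(y')$ coincide, giving $f'_*(u')^*\varphi=u^*f_*\varphi$ pointwise. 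Distributional convergence transfers because, by Tonelli, $f_*|\varphi|$ is represented by the locally integrable fiber-integral function $h(y)=\int_{f^{-1}(y)}|\varphi|$, and the pullback $u^*h$ of this function along the submersion $u$ is again locally integrable on $Y'$.

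I expect the main obstacle to be part (ii): unlike the others it requires an actual construction rather than a manipulation of integrals, and the delicate point is producing the positive auxiliary section $\psi_0$ with $f_*\psi_0>0$ on $\supp\varphi$. This is precisely where smoothness and surjectivity on $K$-points of $f$ are genuinely used, guaranteeing both enough local preimages and the local product structure needed to apply part (i).
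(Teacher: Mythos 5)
Your proposal is correct, but it is organized rather differently from the paper's proof. For (i) and (ii) the paper is deliberately terse: (i) is dismissed as clear, and (ii) is reduced in one sentence to the fact that a smooth morphism is locally, in the $K$-analytic topology, a projection off a product. Your explicit implementation of (ii) — the auxiliary $\psi_0\ge 0$ with $f_*\psi_0>0$ on a neighborhood of $\supp\varphi$, built from local product charts and a finite cover, followed by the projection formula $f_*(\psi_0\cdot f^*g)=g\cdot f_*\psi_0$ with $g=\varphi/f_*\psi_0$ — is a globalized version of that local statement and is a more complete write-up than the paper's. For (iii) and (iv) the routes genuinely diverge: the paper never invokes Fubini--Tonelli on fibrations wholesale, but works dually with test sections, exhausts the relevant space by open compacts $C_n$, applies the distributional identities to the compactly supported pieces cut out by $\de_{C_n-C_{n-1}}$ (for (iii), via the base-change identity $(u'_n)_*(f'_n)^*\psi=f_n^*u_*\psi$ for the proper restrictions $f_n,f'_n$), and then sums the resulting series, with nonnegativity handling the unconditional case and absolute convergence the complex one. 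You instead represent the pushforwards by a.e.-defined fiber-integral functions and apply classical Tonelli/Fubini on locally trivial fibrations, using $(u')^*|\om_f|\simeq|\om_{f'}|$, the fiber identification $(f')^{-1}(y')\simeq f^{-1}(u(y'))$, and the fact that pullback along a submersion preserves local integrability and null sets. Your version is shorter and more transparent; the paper's version keeps everything in the distributional formulation in which the lemma is actually applied later (e.g.\ in Lemma \ref{independence-lem}), never needing to represent $f_*\varphi$ by a function. The one point where you should tighten the wording is the a.e.-versus-everywhere passage between the smooth representative of $f_*\varphi$ and the pointwise fiber integral: in (i), strict positivity at $y=f(x)$ should be deduced by observing that the fiber integral is $>0$ on the whole open set $f(W)$, where $W\ni x$ is a compact open on which $\varphi>0$, so the locally constant representative, being a.e.\ equal to it there, is $>0$ at $y$ itself; the same remark applies implicitly to your pointwise identification in (iii). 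This is a one-line repair, not a gap in the method.
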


\begin{proof}
(i) This is clear.

\noindent
(ii) This follows from the fact that locally in analytic topology, $f$ looks like a projection onto a factor of a product of varieties.

\noindent
(iii) Let $\psi\in \SS(Y'(K),u^*\LL^{-1}\ot |\om_{Y'}|)$. We need to check that the integral $\int_{X'(K)}(u')^*|\varphi|\cdot (f')^*|\psi|$ converges.
Let $\ldots\sub X_n\sub X_{n+1}\sub\ldots\sub X(K)$ be an exhaustive sequence of open compact subsets.
For each $n$, consider the induced cartesian square of topological spaces
\begin{diagram}
X'_n&\rTo{u'_n}&X_n\\
\dTo{f'_n}&&\dTo{f_n}\\
Y'&\rTo{u}&Y
\end{diagram}
where $X'_n=(u')^{-1}(X_n)$.

Since $f_n$ and $f'_n$ are proper, $(f'_n)^*|\psi|$ has compact support, and we have equalities of Schwartz sections,
$$(u'_n)_*(f'_n)^*\psi=f_n^*u_*\psi, \ \ (u'_n)_*(f'_n)^*|\psi|=f_n^*u_*|\psi|.$$
Hence,
$$\int_{X'(K)}(u')^*(|\varphi|\de_{X_n-X_{n-1}})\cdot (f')^*|\psi|=\int_{X(K)}|\varphi|\de_{X_n-X_{n-1}}\cdot f^*u_*|\psi|,$$
$$\int_{X'(K)}(u')^*(\varphi\de_{X_n-X_{n-1}})\cdot (f')^*\psi=\int_{X(K)}\varphi\de_{X_n-X_{n-1}}\cdot f^*u_*\psi.$$
Since $u_*|\psi|\in \SS(Y(K),|\LL|^{-1}\ot |\om_Y|)$, the series
$$\sum_n  \int_{X(K)}|\varphi|\de_{X_n-X_{n-1}}\cdot f^*u_*|\psi|=\int_{X(K)}|\varphi|\cdot f^*u_*|\psi|=\lan f_*|\varphi|,u_*|\psi|\ran$$
converges by assumption.
It follows that
$$\lan f'_*(u')^*\varphi,\psi\ran=\int_{X'(K)}(u')^*\varphi\cdot (f')^*\psi=\int_{X(K)}\varphi\cdot f^*u_*\psi=\lan f_*\varphi,u_*\psi\ran=\lan u^*f_*\varphi,\psi\ran,$$
where the integrals are absolutely convergent,
so we get the claimed equality of distributions.

\noindent
(iv) Let $\psi\in \SS(Y(K),\LL^{-1}\ot |\om_Y|)$. Then $g_*(\varphi\cdot (f\circ g)^*\psi)$ converges as a distribution to $g_*(\varphi)\cdot f^*\psi$.

Assume first that $(f\circ g)_*|\varphi|$ converges as a distribution. Then $\int_{Z(K)}\varphi\cdot (f\circ g)^*\psi$ converges absolutely.
Hence, 
$$\int_{Z(K)}\varphi\cdot (f\circ g)^*\psi=\int_{X(K)}g_*(\varphi\cdot (f\circ g)^*\psi),$$ 
and a similar equality holds for $|\varphi$, $|\psi|$.

Let $C_n\sub C_{n+1}\sub\ldots X(K)$ be an exhaustive sequence of compact open subsets.
Then, we have 
$$\int_{X(K)}\de_{C_n-C_{n-1}}\cdot g_*(\varphi\cdot (f\circ g)^*\psi)=\int_{X(K)}\de_{C_n-C_{n-1}}\cdot g_*(\varphi)\cdot f^*\psi,$$
and a similar equality holds for $|\varphi|$, $|\psi|$.
Hence, we deduce that $\int_{X(K)} g_*(\varphi)\cdot f^*\psi$ converges absolutely and
$$\int_{X(K)} g_*(\varphi)\cdot f^*\psi=\int_{X(K)}g_*(\varphi\cdot (f\circ g)^*\psi)=\int_{Z(K)}\varphi\cdot (f\circ g)^*\psi.$$
Hence, $f_*(g_*\varphi)$ converges as a distribution to $(f\circ g)_*(\varphi)$.
 
Conversely, if $f_*(g_*|\varphi|)$ converges as a distribution, we can repeat the above arguments backwards, and get that $(f\circ g)_*(\varphi)$ converges as a distribution
to $f_*(g_*\varphi)$. 
\end{proof}

We also have the following version of the Cauchy-Schwarz inequality.
We say that a twisted distribution, i.e., a functional on $\SS(X(K),\LL)$, where $\LL$ is a positive local system on $X(K)$, is nonnegative if it takes
nonnegative values on nonnegative elements of $\SS(X(K),\LL)$. Thus, we can consider inequalities between such distributions.

\begin{lemma}\label{CS-lem}
Let $f:X\to Y$ be a smooth morphism, $\LL_1$ and $\LL_2$ positive local systems on $Y$.
Suppose for $i=1,2$, $\varphi_i\in C^\infty(X(K),|\om_f|^{1/2}\ot f^*\LL_i)$ is a smooth nonnegative section such that $f_*(\varphi_i^2)$ converges as a distribution
to an element of $C^\infty(Y(K),\LL_i^2)$. Then $f_*(\varphi_1\cdot\varphi_2)$ converges as a distribution, and we have an inequality of twisted distributions,
\begin{equation}\label{CS-ineq}
f_*(\varphi_1\cdot\varphi_2)\le (f_*(\varphi_1^2))^{1/2}\cdot (f_*(\varphi_2^2))^{1/2},
\end{equation}
\end{lemma}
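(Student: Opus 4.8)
The plan is to reduce the inequality \eqref{CS-ineq} to the classical Cauchy--Schwarz inequality applied \emph{fibrewise}. Fix $y\in Y(K)$ and restrict everything to the fibre $X_y:=f^{-1}(y)$. The relative canonical restricts as $|\om_f|^{1/2}|_{X_y}\simeq |\om_{X_y}|^{1/2}$, while $f^*\LL_i|_{X_y}$ is the constant local system with stalk $\LL_{i,y}$; hence $\varphi_i|_{X_y}$ is a nonnegative half-density on $X_y$ valued in the positive line $\LL_{i,y}$. Choosing positive generators of $\LL_{1,y}$ and $\LL_{2,y}$ turns each $\varphi_i|_{X_y}$ into a genuine nonnegative scalar half-density, and the usual $L^2$ Cauchy--Schwarz inequality on the measure space $X_y$ gives
$$\int_{X_y}\varphi_1\varphi_2\le \Big(\int_{X_y}\varphi_1^2\Big)^{1/2}\Big(\int_{X_y}\varphi_2^2\Big)^{1/2}$$
as an inequality in the positive line $(\LL_1\ot\LL_2)_y$; since both sides scale correctly under a change of positive generator, the statement is independent of these choices. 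In particular the fibre integral $\int_{X_y}\varphi_1\varphi_2$ converges whenever the two fibre integrals $\int_{X_y}\varphi_i^2$ do.

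Next I would record that the hypothesis supplies exactly these fibre integrals: because $\varphi_i^2\ge 0$ and $f_*(\varphi_i^2)$ converges as a distribution to a $C^\infty$ element, fibrewise integration of the nonnegative density $\varphi_i^2$ (as in the proof of Lemma \ref{integration-lem}(i)) identifies $\int_{X_y}\varphi_i^2$ with the smooth nonnegative function $f_*(\varphi_i^2)(y)$. Over the non-archimedean field $K$ every smooth function is locally constant, so the positive square roots $(f_*(\varphi_i^2))^{1/2}$ are again smooth nonnegative sections of $\LL_i$, and $R:=(f_*(\varphi_1^2))^{1/2}\cdot(f_*(\varphi_2^2))^{1/2}$ is a smooth nonnegative section of $\LL_1\ot\LL_2$. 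The fibrewise inequality above then reads as the pointwise bound $f_*(\varphi_1\varphi_2)(y)\le R(y)$ in $(\LL_1\ot\LL_2)_y$.

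Finally I would deduce both convergence and the distributional inequality. For a nonnegative test section $\psi\in\SS(Y(K),(\LL_1\ot\LL_2)^{-1}\ot|\om_Y|)$, the integrand $\varphi_1\varphi_2\cdot f^*\psi$ is a nonnegative density on $X$ (using $\om_X\simeq\om_f\ot f^*\om_Y$), so fibrewise integration gives
$$\int_{X(K)}\varphi_1\varphi_2\cdot f^*\psi=\int_{Y(K)}f_*(\varphi_1\varphi_2)\cdot\psi\le\int_{Y(K)}R\cdot\psi<\infty,$$
the last quantity being finite since $R$ is smooth and $\psi$ has compact support; applying this to $|\psi|$ in place of $\psi$ yields absolute convergence, so $f_*(\varphi_1\varphi_2)$ is defined as a distribution. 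Reading the displayed chain for every nonnegative $\psi$ says precisely that $R-f_*(\varphi_1\varphi_2)$ is a nonnegative distribution, which is the meaning of the claimed inequality \eqref{CS-ineq}. The main obstacle I anticipate is the careful fibrewise reduction in the first two steps: namely justifying the identification of the distributional push-forward $f_*(\varphi_i^2)$ with the genuine fibre integral and the interchange of integration, after which the classical Cauchy--Schwarz inequality and local constancy over $K$ make the rest routine.
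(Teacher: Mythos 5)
Your proof is correct, but it takes a genuinely different route from the paper's. You apply the classical Cauchy--Schwarz inequality \emph{fibrewise}, obtaining the pointwise bound $\int_{X_y}\varphi_1\varphi_2\le (f_*(\varphi_1^2))^{1/2}(y)\cdot (f_*(\varphi_2^2))^{1/2}(y)$ and then integrating against a test section by Tonelli. The paper never disintegrates: given a nonnegative test section $\psi$, it applies Cauchy--Schwarz once, globally on $X(K)$, to the half-densities $\varphi_i\cdot f^*\psi^{1/2}$, and then uses local constancy of the smooth sections $f_*(\varphi_i^2)$ to reduce to $\psi$ supported on a compact open $C\sub Y(K)$ on which $f_*(\varphi_i^2)|_C=c_i$ is constant, so that $\bigl(c_1\int_C\psi\bigr)^{1/2}\bigl(c_2\int_C\psi\bigr)^{1/2}=\int_C (f_*(\varphi_1^2))^{1/2}(f_*(\varphi_2^2))^{1/2}\psi$; this localization trick is exactly what substitutes for your fibrewise step. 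The main technical cost of your route --- which you correctly flag --- is the identification of the distributional push-forward $f_*(\varphi_i^2)$ with the genuine fibre integral: be aware that this identification holds only for \emph{almost every} $y$, not every $y$ (for a nonnegative locally constant relative density one can let the mass in the fibres escape to infinity as $y$ approaches a single point, so the fibre integral can drop below the smooth representative on a null set), whereas the paper's argument only ever pairs against $\psi$, so the defining property of ``converges as a distribution'' is used verbatim and no disintegration is required. Since \eqref{CS-ineq} is an inequality of distributions, the almost-everywhere form of your pointwise bound suffices, and your argument is complete once ``identifies'' is weakened to ``identifies for almost every $y$''; what your route buys in exchange is the stronger a.e.\ pointwise inequality of fibre integrals and a proof that needs no decomposition of the test section, while the paper's route buys freedom from any measure-theoretic interchange beyond the definitions.
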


\begin{proof}
Given a nonnegative section $\psi\in \SS(Y(K),|\om_Y|\ot \LL_1^{-1}\ot \LL_2^{-1})$, we need to check that
$$\int_{X(K)} \varphi_1\varphi_2f^*\psi\le \int_{Y(K)} (f_*(\varphi_1^2))^{1/2}\cdot (f_*(\varphi_2^2))^{1/2}\psi.$$
The statement is local on $Y(K)$, so we can assume $\LL_1$ and $\LL_2$ to be trivial. 
Furthermore, it is enough to prove this for $\psi$ supported on a compact open $C\sub Y(K)$, such that $f_*(\varphi_1)|_C$ and $f_*(\varphi_2)|_C$ are constant,
say, $f_*(\varphi_i)|_C=c_i$ for $i=1,2$. 

Now applying the usual Cauchy-Schwarz inequality to the half-densities $\varphi_i\cdot f^*\psi^{1/2}$ on $X(K)$, we get
\begin{align*}
&\int_{X(K)}\varphi_1\varphi_2f^*\psi\le \bigl(\int_{X(K)}\varphi_1^2f^*\psi\bigr)^{1/2}\cdot \bigl(\int_{X(K)}\varphi_2^2f^*\psi\bigr)^{1/2}
=\bigl(\int_{C}f_*(\varphi_1^2)\psi\bigr)^{1/2}\cdot \bigl(\int_{C}f_*(\varphi_2^2)\psi\bigr)^{1/2}\\
&=\bigl(c_1\int_C\psi\bigr)^{1/2}\cdot \bigl(c_2\int_C\psi\bigr)^{1/2}=c_1^{1/2}c_2^{1/2}\int_C\psi=
\int_{Y(K)}(f_*(\varphi_1^2))^{1/2}(f_*(\varphi_2^2))^{1/2}\psi.
\end{align*}
\end{proof}

\subsection{Definition of the $L^2$-pairing}\label{L2-def-sec}

Let $\XX$ be a smooth algebraic stack of finite type over $K$. 
Let $\pi:X\to \XX$ be a smooth morphism, where $X$ is a smooth $K$-variety.
Set $Y:=X\times_{\XX} X$, and let $p_1,p_2:Y\to X$ be the two projections.

For $s\in \R$, the natural isomorphisms
$$\a_1:p_1^*\om_X\ot p_2^*\om_\pi\rTo{\sim} \om_Y, \ \ \a_2:p_2^*\om_X\ot p_1^*\om_\pi\rTo{\sim} \om_Y$$
induce isomorphisms
\begin{align*}
&
|p_1^*\om_X|^{1/2+is}\cdot |p_1^*\om_\pi|^{1/2-is}\cdot |p_2^*\om_X|^{1/2-is}\cdot |p_2^*\om_\pi|^{1/2+is}\simeq\\
&|p_1^*\om_X\ot p_2^*\om_\pi|^{1/2+is}
\ot |p_2^*\om_X\ot p_1^*\om_\pi|^{1/2-is}\rTo{|\a_1|^{1/2+is}|\a_2|^{1/2-is}} |\om_Y|
\end{align*}
(these isomorphisms are positive for $s=0$).

Now, given sections 
$$\varphi_1,\varphi_2\in C^\infty(X(K),|\pi^*\om_{\XX}|^{1/2+is}\cdot |\om_\pi|)\simeq C^\infty(X(K),|\om_X|^{1/2+is}\cdot |\om_\pi|^{1/2-is}),$$
we have on $Y=X\times_{\XX} X$,
$$p_1^*\varphi_1\cdot p_2^*\ov{\varphi}_2\in |p_1^*\om_X|^{1/2+is}\cdot |p_1^*\om_\pi|^{1/2-is}\cdot |p_2^*\om_X|^{1/2-is}\cdot |p_2^*\om_\pi|^{1/2+is},
$$
hence, $|\a_1|^{1/2+is}|\a_2|^{1/2-is}(p_1^*\varphi_1\cdot p_2^*\ov{\varphi}_2)$ is a smooth section of $|\om_Y|$.
Thus, we can look at the integral
$$\lan\varphi_1,\varphi_2\ran_s:=\int_{Y(K)}(|\a_1|^{1/2+is}\cdot |\a_2|^{1/2-is})(p_1^*\varphi_1\cdot p_2^*\ov{\varphi}_2).$$


In the case $s=0$, we will set $\lan\varphi_1,\varphi_2\ran:=\lan\varphi_1,\varphi_2\ran_0$. Note that in this case, if $\varphi_1\ge 0$ and $\varphi_2\ge 0$ then the integrand of 
$\lan\varphi_1,\varphi_2\ran$ is $\ge 0$.
It is clear from the definition that $\lan \varphi_1,\varphi_2\ran=\ov{\lan \varphi_2,\varphi_1\ran}$.

More generally, we can consider a pair of smooth morphisms $\pi_i:X_i\to \XX$, $i=1,2$, and repeat the above definition of $\lan\varphi_1,\varphi_2\ran_s$ with 
$\varphi_i\in C^\infty(X_i(K),|\om_{X_i}|^{1/2+is}\cdot |\om_{\pi_i}|^{1/2-is})$, $i=1,2$, and with
$Y=X_1\times_{\XX} X_2$.

\begin{lemma}\label{independence-lem} 
Let $\pi_i:X_i\to \XX$, $i=1,2$ be a pair of smooth morphisms, 
and let $f_i:X'_i\to X_i$, $i=1,2$, be smooth morphisms, and let $\pi'_i:=\pi_i\circ f_i:X'_i\to \XX$, $i=1,2$.
Suppose we are given 
$$\varphi_i\in C^\infty(X'_i(K),|\om_{X'}|^{1/2+is}\ot |\om_{\pi'_i}|^{1/2-is})\simeq
C^\infty(X'_i(K),|\om_{f_i}|\ot f_i^*(|\om_{X}|^{1/2+is}\ot |\om_{\pi_i}|^{1/2-is})),$$ 
for $i=1,2$, such that 
$f_{i_*}\varphi_i$ and $f_{i*}|\varphi_i|$ converge as distributions and belong to  \\
$C^\infty(X_i(K),|\om_X|^{1/2}\ot |\om_{\pi_i}|^{1/2})$.
Then one has
$$\lan |\varphi_1|, |\varphi_2|\ran=\lan f_{1*}|\varphi_1|,f_{2*}|\varphi_2|\ran,$$
where both sides are in $[0,+\infty]$. Furthermore, if one of the sides is finite then
$$\lan \varphi_1, \varphi_2\ran=\lan f_{1*}\varphi_1,f_{2*}\varphi_2\ran.$$
\end{lemma}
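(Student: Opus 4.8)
The plan is to realize both sides as integrals over fibre products and to relate them by a single push-forward along a smooth map, feeding everything through Lemma \ref{integration-lem}. First I would set up the geometry. Write $Y=X_1\times_\XX X_2$ with projections $p_1,p_2$ and $Y'=X_1'\times_\XX X_2'$ with projections $p_1',p_2'$. Since $f_1,f_2$ are smooth, the induced morphism $g:=f_1\times_\XX f_2:Y'\to Y$ is smooth, and its relative dualizing sheaf splits as $\om_g\simeq (p_1')^*\om_{f_1}\ot (p_2')^*\om_{f_2}$; moreover $g$ factors as $Y'\xrightarrow{h_2}Y_{12}\xrightarrow{h_1}Y$ with $Y_{12}:=X_1'\times_\XX X_2$, where $h_2$ is the base change of $f_2$ along the projection $Y_{12}\to X_2$ and $h_1$ the base change of $f_1$ along $Y\to X_1$. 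Let $\Phi'$ denote the integrand on $Y'(K)$ defining $\lan\varphi_1,\varphi_2\ran_s$ and $\Phi$ the integrand on $Y(K)$ defining $\lan f_{1*}\varphi_1,f_{2*}\varphi_2\ran_s$. The whole statement then reduces to the single identity $g_*\Phi'=\Phi$ together with the Fubini fact $\int_{Y'(K)}\Phi'=\int_{Y(K)}g_*\Phi'$, which is Lemma \ref{integration-lem}(iv) with target a point.

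The key step is the identity $g_*\Phi'=\Phi$, which I would prove by Fubini through the factorization $g=h_1\circ h_2$. Using $\om_{X_i'}\simeq\om_{f_i}\ot f_i^*\om_{X_i}$ and $\om_{\pi_i'}\simeq\om_{f_i}\ot f_i^*\om_{\pi_i}$ (the latter because $\pi_i'=\pi_i\circ f_i$), one rewrites the canonical isomorphisms $\a_1',\a_2'$ on $Y'$ in terms of $\a_1,\a_2$ on $Y$ and the splitting of $\om_g$; this shows that $\Phi'$ is the product of the fibre densities coming from $\om_g$ with the $g$-pullback of the $Y$-level $\a$-factor, so that pushing forward along $g$ only integrates the factors $p_1'^*\varphi_1$ and $p_2'^*\ov{\varphi}_2$ over the $f_1$- and $f_2$-fibres respectively. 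Concretely, the square relating $Y',Y_{12},X_2',X_2$ is cartesian and $p_1'^*\varphi_1$ is pulled back from $Y_{12}$, so Lemma \ref{integration-lem}(iii) together with the projection formula gives that $h_{2*}\Phi'$ is the $Y_{12}$-integrand built from $\varphi_1$ and $\ov{f_{2*}\varphi_2}$. Pushing the remaining $\varphi_1$-factor forward along $h_1$ in the same way, and composing the two push-forwards by Lemma \ref{integration-lem}(iv), yields $g_*\Phi'=\Phi$.

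With $g_*\Phi'=\Phi$ in hand I would finish by treating convergence. For the absolute-value version all integrands are nonnegative: $|\varphi_i|$ is a nonnegative section, $f_{i*}|\varphi_i|\ge 0$ by Lemma \ref{integration-lem}(i), and the associated integrand $|\Phi'|$ is $\ge 0$ (the isomorphisms $|\a_1|^{1/2}|\a_2|^{1/2}$ are positive). Hence the same push-forward computation, now with no convergence assumed, gives $\lan|\varphi_1|,|\varphi_2|\ran=\int_{Y'(K)}|\Phi'|=\int_{Y(K)}g_*|\Phi'|=\lan f_{1*}|\varphi_1|,f_{2*}|\varphi_2|\ran$ as an identity in $[0,+\infty]$. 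If this common value is finite, then $|\Phi'|$ is integrable over $Y'(K)$, so $\int_{Y'(K)}\Phi'$ converges absolutely; the manipulations of the previous paragraph are then all absolutely convergent, and $\int_{Y'(K)}\Phi'=\int_{Y(K)}\Phi$ gives $\lan\varphi_1,\varphi_2\ran=\lan f_{1*}\varphi_1,f_{2*}\varphi_2\ran$.

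I expect the main obstacle to be the bookkeeping in the second paragraph: verifying that the canonical isomorphisms $\a_i'$ on $Y'$ are compatible with $\a_i$ on $Y$ under $g^*$ and the splitting of $\om_g$, so that $g_*$ carries $\Phi'$ exactly to $\Phi$ with no spurious Jacobian factor. This is a cocycle-type compatibility among the relative dualizing sheaves of $f_i$, $\pi_i$, $\pi_i'$, $g$, and the two projections, and one must track how the $1/2+is$ and $1/2-is$ exponents are distributed between the $\om_{f_i}$-fibre densities and the pulled-back $Y$-factors. Once this is checked, the analytic content is supplied entirely by the Fubini-type statements of Lemma \ref{integration-lem}(iii) and (iv) and the nonnegativity in part (i).
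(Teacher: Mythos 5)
Your proposal is correct and follows essentially the same route as the paper: the paper likewise pushes forward in two stages through the intermediate fibre product $X_1\times_{\XX}X'_2$ (you use $X'_1\times_{\XX}X_2$, which is immaterial), invoking Lemma \ref{integration-lem}(iii) for each cartesian square and settling convergence by treating $|\varphi_i|$ first. The compatibility of the $\a$-isomorphisms that you flag as the main obstacle is already built into the identification of twisted coefficient systems stated in the lemma's hypotheses, so no extra Jacobian bookkeeping is needed beyond what you describe.
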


\begin{proof}
The idea is that applying first $(f_1\times \id)_*$ and then $(\id\times f_2)_*$, we get
\begin{equation}\label{independence-eq}
\int_{(X'_1\times_{\XX} X'_2)(K)}p_1^*\varphi_1\cdot p_2^*\ov{\varphi}_2=\int_{(X_1\times_{\XX} X'_2)(K)} p_1^*(f_{1*}\varphi_1)\cdot p_2^*\ov{\varphi}_2=
\int_{(X_1\times_{\XX}X_2)(K)} 
p_1^*(f_{1*}\varphi_1)\cdot p_2^*(f_{2*}\ov{\varphi}_2),
\end{equation}
where we use the cartesian squares
\begin{diagram}
X'_1\times_{\XX} X'_2&\rTo{p_1}& X'_1\\
\dTo{f_1\times \id}&&\dTo{f_1}\\
X_1\times_{\XX} X'_2&\rTo{p_1}& X_1
\end{diagram}
\begin{diagram}
X_1\times_{\XX} X'_2&\rTo{p_2}& X'_2\\
\dTo{\id\times f_2}&&\dTo{f_2}\\
X_1\times_{\XX} X_2&\rTo{p_2}& X_2
\end{diagram}

In more detail, we first apply Lemma \ref{integration-lem}(iii) to the first cartesian square and $\varphi_1$ to deduce that
$$(f_1\times\id)_*(p_1^*\varphi_1)=p_1^*f_{1*}\varphi_1$$
as twisted distributions on $X_1\times_{\XX}X'_2$.

Let $\ldots\sub C_n\sub C_{n+1}\sub\ldots \sub (X_1\times_{\XX}X'_2)(K)$ be an exhaustive sequence of open compacts.
Applying the above equality of distributions to Schwartz sections $p_2^*\ov{\varphi}_2\cdot \de_{C_n-C_{n+1}}$ we get equalities
\begin{equation}\label{indep-terms-int-eq}
\int_{(X'_1\times_{\XX} X'_2)(K)}p_1^*\varphi_1\cdot p_2^*\ov{\varphi}_2\de_{(f_1\times\id)^{-1}(C_n-C_{n+1)}}=
\int_{(X_1\times_{\XX} X'_2)(K)} p_1^*(f_{1*}\varphi_1)\cdot p_2^*\ov{\varphi}_2\de_{C_n-C_{n+1}},
\end{equation}
$$\int_{(X'_1\times_{\XX} X'_2)(K)}p_1^*|\varphi_1|\cdot p_2^*|\varphi_2|\de_{(f_1\times\id)^{-1}(C_n-C_{n+1})}=
\int_{(X_1\times_{\XX} X'_2)(K)} p_1^*(f_{1*}|\varphi_1|)\cdot p_2^*|\varphi_2|\de_{C_n-C_{n+1}}.$$
Thus,
$$\int_{(X'_1\times_{\XX} X'_2)(K)}p_1^*|\varphi_1|\cdot p_2^*|\varphi_2|=\int_{(X_1\times_{\XX} X'_2)(K)} (f_{1*}|\varphi_1|)\cdot |\varphi_2|,$$
and if this is finite then
$$\int_{(X'_1\times_{\XX} X'_2)(K)}p_1^*\varphi_1\cdot p_2^*\ov{\varphi}_2=\int_{(X_1\times_{\XX} X'_2)(K)} (f_{1*}\varphi_1)\cdot \ov{\varphi}_2$$
(we can think of both sides as absolutely converging infinite series with terms given by the sides of \eqref{indep-terms-int-eq}).


Similarly, we apply Lemma \ref{integration-lem}(iii) to the second cartesian square and $\ov{\varphi}_2$ to get the
equality of twisted distributions on $X_1\times_{\XX} X_2$,
$$p_2^*f_{2*}(\ov{\varphi}_2)=(\id\times f_2)_*p_2^*\ov{\varphi}_2,$$
which leads to the second equality in \eqref{independence-eq}.
\end{proof}

\begin{definition}
Let $\pi:X\to \XX$ be a smooth morphism, and let $s\in \R$. We say that
$\varphi\in C^\infty(X(K),|\om_X|^{1/2+is}\ot |\om_\pi|^{1/2-is})$ {\it is of class $L^2$ (relative to $\XX$)} if 
$\lan |\varphi|,|\varphi|\ran<+\infty$.
\end{definition}

One can check in the standard way that if $\varphi_1,\varphi_2$ are of class $L^2$ then so is $\varphi_1+\varphi_2$.

Recall (see \cite{GK}) that a stack of finite type $\XX$ over $K$ is called {\it admissible} if there exists a smooth covering $\pi:X\to \XX$ (called {\it admissible}), such that for any $Y\to \XX$,
the map $X\times_\XX Y\to Y$ is surjective on $K$-points.
In this case $\SS(\XX(K),|\om|^{1/2})$ is a quotient of $\SS(X(K),|\pi^*\om_{\XX}|^{1/2}\cdot |\om_\pi|)\simeq \SS(X(K),|\om_X|^{1/2}\cdot |\om_\pi|^{1/2})$
by the subspaces consisting of differences of two different push-forwards from Schwartz twisted densities on $(X\times_{\XX} X)(K)$.

\begin{prop}\label{L2-prop-def}
Assume that there exists an admissible covering $\pi:X\to \XX$ such that 
all $\varphi\in \SS(X(K),|\om_X|^{1/2}\ot |\om_\pi|^{1/2})$ are of class $L^2$ relative to $\XX$ (or equivalently, all such $\varphi\ge 0$ are of class $L^2$).
Then the same is true for any smooth morphism $\wt{X}\to \XX$. In this case $\lan\varphi_1,\varphi_2\ran$ converges absolutely for
all $\varphi_1,\varphi_2\in \SS(X(K),|\om_X|^{1/2+is}\ot |\om_\pi|^{1/2-is})$ (where $s\in\R$), and the pairing $\lan\cdot,\cdot\ran$ descends to a pairing on 
$\SS(\XX(K),|\om_{\XX}|^{1/2+is})$ that does not depend on an admissible covering.
\end{prop}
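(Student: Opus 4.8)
The plan is to establish the three assertions in turn, using the pushforward–invariance of the pairing (Lemma \ref{independence-lem}) as the main engine and the lifting statement (Lemma \ref{integration-lem}(ii)) to transport the hypothesis from $X$ to an arbitrary presentation. Throughout I write $\lan\cdot,\cdot\ran_W$ for the pairing relative to $\XX$ computed on the fibre product associated with a smooth $W\to\XX$.

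\emph{Extension to an arbitrary smooth $\wt X\to\XX$.} First I would reduce to $\varphi\ge 0$, since $\varphi$ is of class $L^2$ iff $\lan|\varphi|,|\varphi|\ran<\infty$ and $|\varphi|$ is again a nonnegative Schwartz section. Given a smooth $q:\wt X\to\XX$ and $\varphi\ge 0$, form $Z:=X\times_{\XX}\wt X$ with projections $a:Z\to X$ and $b:Z\to\wt X$. Both are smooth; $b$ is surjective on $K$-points because $\pi$ is admissible; and $\pi\circ a=q\circ b=:\rho_Z$, while a bookkeeping of relative dualizing sheaves (using $\om_b\cong a^*\om_\pi$ and $\om_{\rho_Z}\cong\om_b\ot b^*\om_q$) shows $b^*(|\om_{\wt X}|^{1/2}\ot|\om_q|^{1/2})\ot|\om_b|\cong|\om_Z|^{1/2}\ot|\om_{\rho_Z}|^{1/2}$. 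Hence Lemma \ref{integration-lem}(ii) applied to $b$ yields $\psi\ge 0$ in $\SS(Z(K),|\om_Z|^{1/2}\ot|\om_{\rho_Z}|^{1/2})$ with $b_*\psi=\varphi$. Applying Lemma \ref{independence-lem} to the single section $\psi$, once along $b$ and once along $a$, gives $\lan\varphi,\varphi\ran_{\wt X}=\lan\psi,\psi\ran_Z=\lan a_*\psi,a_*\psi\ran_X$. Since $a_*\psi$ is a nonnegative Schwartz section on $X$, the hypothesis forces $\lan a_*\psi,a_*\psi\ran_X<\infty$, so $\varphi$ is of class $L^2$.

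\emph{Absolute convergence.} For $\varphi_1,\varphi_2\in\SS(X(K),|\om_X|^{1/2+is}\ot|\om_\pi|^{1/2-is})$, the modulus of the integrand defining $\lan\varphi_1,\varphi_2\ran_s$ equals the nonnegative integrand of $\lan|\varphi_1|,|\varphi_2|\ran$, because the phase factors $|\a_1|^{is}$, $|\a_2|^{-is}$ and the conjugation have modulus one (cf.\ the identity $||L|^{\kappa}|\simeq|L|^{\Re(\kappa)}$). Thus absolute convergence reduces to $\lan|\varphi_1|,|\varphi_2|\ran<\infty$. Rather than invoking a genuine Cauchy–Schwarz argument, I would note that $\chi:=|\varphi_1|+|\varphi_2|$ is a nonnegative Schwartz section on $X$, hence of class $L^2$ by hypothesis; expanding $\lan\chi,\chi\ran$ biadditively produces four summands, each with nonnegative integrand, so each is finite, in particular $\lan|\varphi_1|,|\varphi_2|\ran<\infty$. (Lemma \ref{CS-lem} supplies the sharper bound but is not needed here.)

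\emph{Descent and independence of the covering.} Recall $\SS(\XX(K),|\om|^{1/2})$ is the quotient of $\SS(X(K),\ldots)$ by the span of the differences $p_{1*}\theta-p_{2*}\theta$ with $\theta\in\SS(Y(K),\ldots)$, $Y=X\times_{\XX}X$. The decisive point is that $\pi\circ p_1=\pi\circ p_2$ is a single map $Y\to\XX$; applying Lemma \ref{independence-lem} to $\theta$ (along $p_1$, resp.\ $p_2$) paired against a fixed $\eta$ (with $f_2=\id$) gives $\lan p_{1*}\theta,\eta\ran=\lan\theta,\eta\ran=\lan p_{2*}\theta,\eta\ran$, all absolutely convergent by the previous step. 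Hence the relations pair to zero, and with the Hermitian symmetry $\lan\varphi_1,\varphi_2\ran=\ov{\lan\varphi_2,\varphi_1\ran}$ the pairing descends to $\SS(\XX(K),|\om_{\XX}|^{1/2+is})$. For independence of the covering I would compare two admissible coverings $\pi:X\to\XX$ and $\pi'':X''\to\XX$ through the common refinement $X'=X\times_{\XX}X''$ (smooth, admissible, and with all Schwartz sections $L^2$ by the first step): any lift of a pair of classes to $X'$ pushes forward to lifts on $X$ and on $X''$, and two applications of Lemma \ref{independence-lem} identify the pairing computed on $X'$ with those computed on $X$ and on $X''$.

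\emph{Main obstacle.} The delicate point is not any single inequality but verifying, at each use of Lemma \ref{independence-lem}, that the relevant pushforwards land in smooth sections and converge as distributions. These hypotheses are exactly what the $L^2$-finiteness of the first two steps provides, which is why the order of the argument—first extend the $L^2$-property to every presentation, then establish absolute convergence, and only then descend—is essential.
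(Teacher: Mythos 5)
Your proof is correct and follows essentially the same route as the paper: the paper likewise handles an arbitrary smooth $\wt{X}\to\XX$ by forming $X\times_{\XX}\wt{X}$, lifting via Lemma \ref{integration-lem}(ii), and transporting along both projections with Lemma \ref{independence-lem}, then dismisses the remaining assertions as easy consequences. Your fill-ins for those remaining assertions (the biadditive expansion of $\lan|\varphi_1|+|\varphi_2|,|\varphi_1|+|\varphi_2|\ran$ for absolute convergence, and pairing the relations $p_{1*}\theta-p_{2*}\theta$ against a fixed $\eta$ with $f_2=\id$ for descent) are exactly the intended arguments.
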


\begin{proof}
First, suppose $f:X'\to X$ is a smooth morphism, and let $\pi'=\pi\circ f:X'\to \XX$. By Lemma \ref{independence-lem}, any $\varphi$ on $X'$ is $L^2$ relative to $\XX$, and
$\lan\varphi,\varphi'\ran=\lan f_*\varphi,f_*\varphi'\ran$. 
If $\wt{\pi}:\wt{X}\to \XX$ is a smooth morphism, then $X':=X\times_{\XX}\wt{X}$ has smooth projections to $X$ and $\wt{X}$, and the projection $X'\to \wt{X}$ is surjective on $K$-points.
Applying Lemma \ref{independence-lem} together with Lemma \ref{integration-lem}(ii), we see that $L^2$-property holds for $\wt{X}\to \XX$ and the pairings are compatible. 
This easily implies the remaining assertions.
\end{proof}

\begin{definition}
In the situation of Proposition \ref{L2-prop-def} (where $\XX$ is of finite type) we say that $\XX$ is an {\it $L^2$-stack}.
We say that a stack $\XX$ of possibly infinite type is $L^2$, if every open substack of finite type in $\XX$ is $L^2$ (or, equivalently, there exists
an exhaustive filtration of $\XX$ by open $L^2$ substacks of finite type).
\end{definition}

Let us rewrite our definition in the case of the global quotient $\XX=[X/G]$, where $G$ is a linear algebraic group such that every $G$-torsor over $\Spec(K)$ is trivial.
Let us fix a trivialization $\vol_{\fg}\in {\det}(\fg)^*$, where $\fg$ is the Lie algebra of $G$, and let $dg$ denote the corresponding right invariant Haar measure on $G$.
Let
$$\de(g)=\de_G(g):=|{\det}^{-1}(\ad(g))|$$
denote the modular character.

\begin{lemma}\label{quotient-lem}
In the above situation, for $\varphi_1,\varphi_2\in \SS(X(K),|\om_X|^{1/2})$, one has
$$\lan \varphi_1 |\vol_{\fg}|^{1/2}, \varphi_2 |\vol_{\fg}|^{1/2}\ran=
\int_{G(K)} (\varphi_1,g^*\varphi_2) \de(g)^{1/2}dg,$$
where $(\varphi,\psi)=\int_{X(K)}\varphi\cdot \ov{\psi}$.
\end{lemma}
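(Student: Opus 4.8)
The plan is to evaluate the pairing $\lan\cdot,\cdot\ran$ of Section~\ref{L2-def-sec} for the tautological atlas $\pi\colon X\to\XX=[X/G]$ and then integrate along the fibre. Since $\pi$ is a $G$-torsor, the infinitesimal action identifies its relative tangent bundle with $\fg$, so $\om_\pi\simeq{\det}(\fg)^{*}$ and the fixed $\vol_\fg$ makes $|\vol_\fg|^{1/2}$ a nowhere-vanishing section of $|\om_\pi|^{1/2}$; thus $\varphi_i|\vol_\fg|^{1/2}$ lies in $\SS(X(K),|\om_X|^{1/2}\ot|\om_\pi|^{1/2})$, exactly the space to which the pairing applies. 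Next I would use the canonical identification $Y:=X\times_\XX X\simeq G\times X$, $(g,x)\mapsto(x,gx)$, under which $p_1=\pr_X$ and $p_2=m$ is the action map, and $Y(K)=G(K)\times X(K)$. Writing $\varphi_i=f_i\,|\om_X|^{1/2}$ locally, the whole proof reduces to computing, in these coordinates, the density
$$
|\a_1|^{1/2}\bigl(p_1^{*}\varphi_1\cdot p_2^{*}|\vol_\fg|^{1/2}\bigr)\cdot|\a_2|^{1/2}\bigl(p_2^{*}\ov{\varphi}_2\cdot p_1^{*}|\vol_\fg|^{1/2}\bigr),
$$
and then applying Fubini.

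The heart of the argument is to compute the two sections $\Omega_1:=\a_1(p_1^{*}\om_X\ot p_2^{*}\vol_\fg)$ and $\Omega_2:=\a_2(p_2^{*}\om_X\ot p_1^{*}\vol_\fg)$ of $\om_Y$. Using that $\a_1$ comes from $\om_{p_1}\simeq p_2^{*}\om_\pi$ (base change in the cartesian square defining $Y$) and trivializing $TG$ by right-invariant fields, $p_2^{*}\vol_\fg$ becomes the right-invariant top form $\om_G^{\mathrm r}$ normalized to $\vol_\fg$ at $e$; hence $|\Omega_1|=|\om_G^{\mathrm r}|\cdot|\om_X|=:\rho_0$, the product of the right Haar density $dg$ and $|\om_X|$, with no extra factor. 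For $\Omega_2$ one must track two Jacobians: because $\a_2$ uses $\om_{p_2}\simeq p_1^{*}\om_\pi$, the $G$-equivariance $dg_x\circ a_x=a_{gx}\circ\ad(g)$ of the infinitesimal action shows that $p_1^{*}\vol_\fg$, read in the right-invariant frame, acquires the factor $\det\ad(g^{-1})$, while $p_2^{*}\om_X=m^{*}\om_X$ contributes the Jacobian $|g^{*}\om_X/\om_X|(x)$ of the automorphism $g\colon X\to X$. Consequently $|\Omega_2|=\de(g)\,|g^{*}\om_X/\om_X|(x)\,\rho_0$, and taking geometric means the integrand equals $f_1(x)\,\ov{f_2}(gx)\,\de(g)^{1/2}\,|g^{*}\om_X/\om_X|^{1/2}(x)\,\rho_0$.

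To conclude I would recognize the inner $X$-integral: since $g^{*}\varphi_2=f_2(gx)\,|g^{*}\om_X/\om_X|^{1/2}(x)\,|\om_X|^{1/2}$, one has $\int_{X(K)}f_1(x)\,\ov{f_2}(gx)\,|g^{*}\om_X/\om_X|^{1/2}(x)\,|\om_X|=(\varphi_1,g^{*}\varphi_2)$, so that $\int_{Y(K)}(\text{integrand})=\int_{G(K)}(\varphi_1,g^{*}\varphi_2)\,\de(g)^{1/2}\,dg$. The interchange of the $G$- and $X$-integrations is justified by Fubini--Tonelli: for $\varphi_1,\varphi_2\ge 0$ the identity holds in $[0,+\infty]$ by Tonelli (in the spirit of Lemma~\ref{integration-lem}), and this in turn licenses the signed/complex case whenever either side is finite.

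I expect the main obstacle to be the modular-character bookkeeping, i.e.\ verifying that the two canonical isomorphisms $\a_1,\a_2$ turn the two copies of $|\vol_\fg|^{1/2}$ into the half-densities of the right- and left-invariant Haar measures along the fibre, whose ratio along $G$ is exactly $\de$, and in particular obtaining the power $\de^{1/2}$ (not $\de^{\pm 1/2}$) with the sign convention $\de(g)=|\det\ad(g)|^{-1}$. A secondary technical point is that the integrand need not have compact support in $g\in G(K)$, which is why the convergence assertion is phrased through Tonelli on absolute values rather than by a naive compactness argument.
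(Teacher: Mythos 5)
Your proposal is correct and follows essentially the same route as the paper: identify $X\times_{[X/G]}X\simeq G\times X$ with $p_1=p_X$ and $p_2$ the action map, trivialize $T_\pi\simeq\fg\ot\OO_X$ and $T_G$ by right translations, compute that $\a_1$ yields the right Haar density while $\a_2$ contributes exactly $|{\det}(da_g)|\cdot\de(g)$, and integrate over $X(K)$ first to recognize $(\varphi_1,g^*\varphi_2)\,\de(g)^{1/2}dg$. Your explicit Fubini--Tonelli justification is a (welcome) elaboration of a step the paper leaves implicit, not a divergence in method.
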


\begin{proof}
For the presentation $\pi:X\to [X/G]$ we have $Y=G\times X$, such that the two projections to $X$ are $p_X:(g,x)\mapsto x$ and $a:(g,x)\mapsto gx$.

Let
$$\xi_x:\fg\to T_xX$$
denote the tangent map at $1$ to the map $a_x:g\mapsto gx$.
It is easy to see that $\xi_x$ is the fiber at $x$ of the map of $G$-equivariant bundles on $G$,
$$\xi:\fg\ot \OO_X\to T_X,$$
where we use the adjoint action of $G$ on $\fg$.
The complex of $G$-equivariant bundles $[\fg\ot \OO_X\to T_X]$ placed in degrees $[-1,0]$ corresponds to the
tangent complex of $[X/G]$. Hence, we get an identification of the relative tangent bundle (as a $G$-equivariant bundle),
$$T_\pi\simeq \fg\ot \OO_X.$$

Let $\fg\ot\OO_G\rTo{\sim} T_G$ be the trivialization of the tangent sheaf of $G$ using right translations.
Then the tangent map to $a:G\times X\to X$ at $(g,x)$ is given by
$$\fg\oplus T_xX\rTo{(\xi_{gx},g)}T_{gx}X.$$

To compute the isomorphisms $\a_1$ and $\a_2$ on $G\times X$, we need to know explicitly the isomorphisms
$a^*T_\pi\simeq T_{p_X}$ and $p_X^*T_\pi\simeq T_a$.
The former isomorphism comes from the identification of both sides with $\fg\ot \OO$. The latter isomorphism,
evaluated at $(g,x)\in G\times X$, fits into a commutative diagram
\begin{diagram}
(T_a)_{g,x}&\rTo{}&T_{(g,x)}G\times X\\
\dTo{\sim}&&\dTo{}\\
(T_\pi)_x&\rTo{}& T_x X
\end{diagram}
where the lower horizontal arrow can be identified with $\xi_x:\fg\to T_x X$.
From our description of $da$ above we see that
$$(T_a)_{g,x}=\ker(da)_{g,x)}=\{(v,-g^{-1}\xi_{gx}(v) \ |\ v\in \fg\}\sub \fg\oplus T_xX.$$
Note that $g^{-1}\xi_{gx}(v)=\xi_x\ad(g)^{-1}(v)$.
Hence, the isomorphism $\fg\simeq (T_\pi)_x\simeq (T_a)_{g,x}$ is given by
$$\fg\to (T_a)_{g,x}: v\mapsto (-\ad(g)(v),\xi_x(v)).$$

It follows that the isomorphism
$$\a_1:p_X^*\om_X\ot a^*\om_\pi\to \om_G\boxtimes \om_X$$
corresponds to the right invariant identification $\om_G\simeq \det^{-1}(\fg)$.
On the other hand, the isomorphism
$$\a_2:a^*\om_X\ot p_X^*\om_\pi\to \om_G\boxtimes \om_X$$
is given at $(g,x)$ by the map
$${\det}(da_g:T_xX\to T_{gx})\cdot {\det}^{-1}(\ad(g)): {\det}^{-1}(T_{gx})\ot {\det}^{-1}(\fg) \to {\det}^{-1}(T_xX)\ot {\det}^{-1}(\fg)$$

Hence,
$$|\a_1|^{1/2}|\a_2|^{1/2}(\varphi_1(x)|\vol_{\fg}|^{1/2}\cdot \ov{\varphi}_2(gx) |\vol_{\fg}|^{1/2})=|{\det}^{-1}(\ad(g))|^{1/2}\cdot \varphi_1(x)\cdot (g^*\varphi_2)(x)\cdot |\vol_{\fg}|.$$
This immediately leads to the claimed formula (integrating over $X(K)$ first we get $(\varphi_1,g^*\varphi_2)\cdot \de(g)^{1/2}dg$).
\end{proof}

\begin{example}\label{BG-ex}
Let $\XX=BG$, where $G$ is a linear algebraic group such that every $G$-torsor over $\Spec(K)$ is trivial.
This stack is $L^2$ if an only if $\int_{G(K)} \de(g)^{1/2}dg<\infty$, where $dg$ is the right invariant Haar measure on $G$.
Assume the characteristic is zero and $G$ is connected. 
Then this is equivalent to compactness of $G(K)$. Indeed, note that $\de$ is trivial on the unipotent radical $R_u$ of $G$.
Since $R_u$ is necessarily split over $K$, if it is nontrivial it contains an additive group over $K$, which would lead to the infinite integral.
Hence, $R_u=1$, so $G$ is reductive. If we have an embedding of $\G_m$ into $G$, we again would get the infinite integral, hence $G$ is anisotropic,
i.e., $G(K)$ is compact.
\end{example}

Recall that a smooth irreducible stack is called {\it very good} if for any $n>0$, the locus of points where the dimension of the automorphism group is equal to $n$, has
codimension $>n$.

\bigskip

\noindent
{\bf Conjecture B}. {\it If $\XX$ is very good then $\XX$ is an $L^2$-stack.}

\bigskip

Note that there exist  $L^2$-stacks which are not very good, for example $BE=[pt/E]$, where $E$ is an elliptic curve, or $[\A^1/\G_m]$, or $[(\P^1)^3/\SL_2]$.
We will show (see Proposition \ref{SL2-very-good}) that if $V$ is a finite-dimensional representation of $\SL_2$ then $[V/\SL_2]$ is $L^2$ if and only it is very good.

\subsection{Some general results}

First, we want to show that the $L^2$-property can be checked locally.

\begin{lemma}\label{local-lem}
(i) Let $\pi:X\to \XX$ be smooth, and assume that for every point $x\in X(K)$, there exists $\varphi\in \SS(X(K),|\om_X|^{1/2}\ot |\om_\pi|^{1/2})$, $\varphi\ge 0$,
such that $\varphi(x)>0$ and $\lan \varphi,\varphi\ran<+\infty$. Then every $\varphi\in \SS(X(K),|\om_X|^{1/2}\ot |\om_\pi|^{1/2})$ is $L^2$ relative to $\XX$.

\noindent
(ii) Assume there exists a Zariski open covering $\XX=\cup_i \UU_i$, such that each $\UU_i$ is $L^2$.
Then $\XX$ is $L^2$.
\end{lemma}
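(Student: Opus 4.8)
\emph{Plan for (i).} The plan is to reduce the $L^2$-finiteness of an arbitrary Schwartz section to that of the finitely many test sections provided by the hypothesis, via a domination argument over the compact support. First I would note that $\varphi$ is of class $L^2$ precisely when the nonnegative section $|\varphi|$ has finite self-pairing, so it suffices to treat $\varphi\ge 0$, and its support $S=\supp\varphi$ is compact. For each $x\in S$ the hypothesis supplies a nonnegative section $\varphi_x$ with $\varphi_x(x)>0$ and $\lan\varphi_x,\varphi_x\ran<\infty$ (so $\varphi_x$ is $L^2$); since sections over $X(K)$ are locally constant, $\varphi_x>0$ on a compact open neighborhood $U_x$ of $x$. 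By compactness of $S$ I extract a finite subcover $U_{x_1},\dots,U_{x_n}$ and set $\Phi=\sum_{k}\varphi_{x_k}$.

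By the closure of the $L^2$ class under addition noted after the definition, $\Phi$ is again $L^2$, and $\Phi>0$ on $\bigcup_k U_{x_k}\supseteq S$. The ratio $\varphi/\Phi$ is then a well-defined, locally constant, nonnegative function on $\{\Phi>0\}$ whose support lies in the compact set $S$; hence it takes finitely many values and is bounded by some $C$, giving $0\le \varphi\le C\,\Phi$ as sections of $|\om_X|^{1/2}\ot|\om_\pi|^{1/2}$ (the inequality is trivial off $S$). Since for $s=0$ the integrand defining $\lan\cdot,\cdot\ran$ is a nonnegative, order-preserving expression in each nonnegative argument, $p_1^*\varphi\cdot p_2^*\varphi\le C^2\,p_1^*\Phi\cdot p_2^*\Phi$ yields $\lan\varphi,\varphi\ran\le C^2\lan\Phi,\Phi\ran<\infty$. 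The only delicate point is that $\varphi\le C\Phi$ is an inequality between sections of a positive local system, which is exactly why I arrange $\Phi$ to be strictly positive on $S$ before forming the quotient.

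\emph{Plan for (ii).} First I would reduce to the finite-type case: any finite-type open substack $\VV\subseteq\XX$ is covered by the finite-type open substacks $\VV\cap\UU_i$, each of which is $L^2$ (being an $L^2$-stack passes to finite-type open substacks, by the definition in the infinite-type case and by restricting an admissible covering as in Proposition \ref{L2-prop-def} otherwise), and by quasi-compactness finitely many suffice. So it is enough to prove: a finite-type $\XX$ covered by finitely many finite-type $L^2$ open substacks $\UU_1,\dots,\UU_m$ is $L^2$. Choosing admissible coverings $\pi_i:X_i\to\UU_i$ witnessing that each $\UU_i$ is $L^2$, I would form $X:=\bigsqcup_i X_i$ with the evident smooth map $\pi:X\to\XX$ and check that it is an admissible covering: given $Y\to\XX$ and $y\in Y(K)$, its image lies in some $\UU_i$, and admissibility of $\pi_i$ applied to $Y\times_\XX\UU_i\to\UU_i$ produces a $K$-point of $X\times_\XX Y=X_i\times_{\UU_i}(Y\times_\XX\UU_i)$ over $y$.

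The key structural fact I would use is that for the open immersion $\UU_i\hra\XX$ one has $\UU_i\times_\XX\UU_i=\UU_i$, whence $X_i\times_\XX X_i=X_i\times_{\UU_i}X_i$ as open subspaces of $X\times_\XX X$, and the canonical isomorphisms $\a_1,\a_2$ for $\XX$ restrict to those for $\UU_i$ (since $\om_\XX|_{\UU_i}=\om_{\UU_i}$ and $\om_\pi|_{X_i}=\om_{\pi_i}$). Consequently, for any $\varphi\ge 0$ supported in a single $X_i$ one gets $\lan\varphi,\varphi\ran_\XX=\lan\varphi,\varphi\ran_{\UU_i}$. To verify the hypothesis of part (i) for $\pi:X\to\XX$, a point $x\in X(K)$ lies in some $X_i$ with $\pi(x)\in\UU_i$; taking a nonnegative $\varphi$ supported near $x$ in $X_i$ with $\varphi(x)>0$, which is $L^2$ relative to $\UU_i$ because $\pi_i$ is the chosen admissible covering, the displayed identity gives $\lan\varphi,\varphi\ran_\XX<\infty$. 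Part (i) then shows every Schwartz section on $X$ is $L^2$ relative to $\XX$, so $X\to\XX$ is an admissible covering with the required property and $\XX$ is $L^2$. I expect the main obstacle to be the bookkeeping of the fiber products together with the compatibility of $\a_1,\a_2$ under restriction to the open substacks, i.e.\ confirming that the pairing is genuinely computed locally over the cover; once this is in place, all the analytic finiteness is inherited from part (i) and the hypotheses.
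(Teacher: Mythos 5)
Your proof is correct and takes essentially the same route as the paper: in (i) you cover the compact support by finitely many neighborhoods on which the hypothesized sections are positive, dominate $\varphi$ by a constant multiple of their (still $L^2$) sum, and conclude by monotonicity of the pairing at $s=0$; in (ii) you pass to $X=\sqcup_i X_i$ built from admissible coverings of the $\UU_i$ and invoke (i), exactly as in the paper. The only difference is that you make explicit some details the paper leaves implicit (the reduction to the finite-type case, admissibility of $\sqcup_i X_i\to \XX$, and the compatibility $X_i\times_{\XX}X_i=X_i\times_{\UU_i}X_i$ of the pairings under the open immersions $\UU_i\hra\XX$), which is harmless and correct.
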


\begin{proof}
(i) Given $\varphi\in \SS(X(K),|\om_X|^{1/2}\ot |\om_\pi|^{1/2})$, $\varphi\ge 0$, for every point $x$ in the support $S$ of $\varphi$, we can find a compact neighborhood of $x$,
$U_x\sub X(K)$, and an element $\varphi_x\in \SS(X(K),|\om_X|^{1/2}\ot |\om_\pi|^{1/2})$ of class $L^2$ relative to $\XX$, such that $\varphi_x\ge 0$ and $\varphi_x=1$ on $U_x$.
Since $S$ is compact, it is covered by finitely many such neighborhoods $U_{x_i}$. Hence, there exists a constant $C>0$, such that $\varphi\le C \sum_i \varphi_{x_i}$,
which implies that $\varphi$ is $L^2$ relative to $\XX$.

\noindent
(ii) Let $U_i\to \wt{\UU}_i$ be admissible coverings. Then $X:=\sqcup U_i\to \XX$ satisfies the assumptions of (i), and the assertion follows from (i).
\end{proof}


Next, we relate our $L^2$-norm for $(1/2+is)$-densities on a stack $\XX$ 
with the $L^2$-norm on a nice open subscheme $U\sub \XX$. In the case when $\XX$ is the stack of bundles (or bundles with parabolic structure),
the assumptions of the Lemma below are satisfied for the open subscheme of very stable bundles (see Theorem \ref{vs-smooth-thm} below).

\begin{lemma}\label{very-stable-lem}
Let $\pi:X\to \XX$ be a smooth morphism, where $X$ is a smooth variety over $K$, and suppose $U\sub \XX$ is a dense open substack, which is a scheme.
Let $\wt{U}=\pi^{-1}(U)$, and let $\pi_U:\wt{U}\to U$ be the morphism induced by $\pi$. 
For
$\varphi\in \SS(X(K),|\om_X|^{1/2+is}\ot |\om_\pi|^{1/2-is})$, consider $\varphi|_{\wt{U}}\in C^\infty(U,|\om_{\wt{U}}|^{1/2+is}\ot |\om_{\pi_U}|^{1/2-is})$.
Assume that the push-forward $\psi:=(\pi_U)_*(\varphi|_{\wt{U}})$ converges as a distribution and belongs to $C^\infty(U,|\om_U|^{1/2+is})$, and the same is true
for $|\varphi|$. 
Then $\lan|\varphi|,|\varphi|\ran=(|\psi|,|\psi|)$, and if this is finite then $\lan\varphi,\varphi\ran=(\psi,\psi)$. In particular, $\varphi$ is $L^2$ if and only if $\psi$ is.
\end{lemma}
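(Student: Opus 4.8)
The plan is to identify the stack-level pairing $\lan|\varphi|,|\varphi|\ran$, which by definition is an integral over $Y:=X\times_\XX X$, with the scheme-level pairing $(|\psi|,|\psi|)$ on $U$. This rests on two observations: first, that the part of $Y$ lying over the complement $\XX\setminus U$ is negligible for integration, so that $\lan|\varphi|,|\varphi|\ran$ is already computed over the open locus $\wt U\times_U\wt U$; and second, that Lemma \ref{independence-lem}, applied with the scheme $U$ as base and $\pi_U\colon\wt U\to U$ as covering, turns that integral into the pairing of $\psi$ on $U$.

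First I would set $q:=\pi\circ p_1=\pi\circ p_2\colon Y\to\XX$ (the two composites agree canonically on the fibre product) and note that $q$ is smooth, being a composite of a base change of $\pi$ with $\pi$. Writing $Y_U:=q^{-1}(U)$, there is a canonical open immersion $Y_U=\wt U\times_U\wt U\hookrightarrow Y$, with complement $Y\setminus Y_U=q^{-1}(\XX\setminus U)$. Since $U$ is dense, $\XX\setminus U$ is a closed substack of positive codimension, and smoothness of $q$ makes $Y\setminus Y_U$ a closed subvariety of positive codimension in $Y$; hence $(Y\setminus Y_U)(K)$ has measure zero for the density $|\om_Y|$. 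As the integrand of $\lan|\varphi|,|\varphi|\ran$ is everywhere nonnegative, discarding this null set gives
\[
\lan|\varphi|,|\varphi|\ran=\int_{Y_U(K)}(|\a_1|^{1/2+is}|\a_2|^{1/2-is})\bigl(p_1^*|\varphi|\cdot p_2^*|\varphi|\bigr).
\]
Since $\wt U$ is open in $X$, the identifications $\om_{\wt U}\simeq\om_X|_{\wt U}$, $\om_{\pi_U}\simeq\om_\pi|_{\wt U}$ and the restrictions of $\a_1,\a_2$ show that the right-hand side is exactly the pairing $\lan|\varphi|_{\wt U}|,|\varphi|_{\wt U}|\ran$ computed relative to the scheme $U$ via the smooth morphism $\pi_U\colon\wt U\to U$.

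Next I would invoke Lemma \ref{independence-lem} with the scheme $U$ in the role of the base, the trivial covering $\id\colon U\to U$, and the map $f:=\pi_U\colon\wt U\to U$. Its hypotheses are precisely that $\psi=(\pi_U)_*(\varphi|_{\wt U})$ and $(\pi_U)_*|\varphi|_{\wt U}|$ converge as distributions and lie in $C^\infty(U,\ldots)$, which is assumed. The lemma yields $\lan|\varphi|_{\wt U}|,|\varphi|_{\wt U}|\ran=\lan|\psi|,|\psi|\ran$, and for the trivial covering $\id\colon U\to U$ the relative cotangent sheaf vanishes, so this pairing degenerates to $\lan|\psi|,|\psi|\ran=(|\psi|,|\psi|)=\int_{U(K)}|\psi|^2$. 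Chaining the equalities gives $\lan|\varphi|,|\varphi|\ran=(|\psi|,|\psi|)$ in $[0,+\infty]$. When this common value is finite, all integrals converge absolutely, so the same two reductions—removal of the null set and the finite case of Lemma \ref{independence-lem}—apply verbatim to the signed integrands $p_1^*\varphi\cdot p_2^*\ov\varphi$ and yield $\lan\varphi,\varphi\ran=(\psi,\psi)$. The final assertion is then immediate: $\varphi$ is $L^2$ relative to $\XX$ exactly when $\int_{U(K)}|\psi|^2<\infty$, i.e.\ when $\psi$ is $L^2$.

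The main obstacle I anticipate is the first step—justifying that the locus of $Y$ over $\XX\setminus U$ may be dropped. This uses the density of $U$ (to guarantee positive codimension) together with the standard fact that a closed subvariety of positive codimension carries no mass for a smooth density over a local field. The nonnegativity of the $|\varphi|$-integrand makes the removal rigorous with no convergence hypothesis, and finiteness of $\lan|\varphi|,|\varphi|\ran$ then upgrades the argument to the signed pairing. A secondary, purely bookkeeping point is verifying that the isomorphisms $\a_1,\a_2$ and the density identifications restrict correctly from $Y$ to $Y_U$, which is automatic because $\wt U$ is open in $X$ and $Y_U$ is open in $Y$.
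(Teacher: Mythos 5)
Your proposal is correct and follows essentially the same route as the paper's proof: both pass to the dense open $V=Y\times_{\XX}U\simeq\wt{U}\times_U\wt{U}$ inside $Y=X\times_{\XX}X$, discard its measure-zero complement (using nonnegativity for $|\varphi|$ and absolute convergence for the signed case), and then apply Lemma \ref{independence-lem} to the identity covering $U\to U$ together with the smooth morphism $\wt{U}\to U$. Your added bookkeeping about $q=\pi\circ p_1$ and the restriction of $\a_1,\a_2$ is harmless elaboration of what the paper leaves implicit.
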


\begin{proof}
Let $Y=X\times_{\XX} X$, $V=Y\times_{\XX} U\simeq \wt{U}\times_U\wt{U}$. Then $V$ is a dense open in $Y$. Hence, the complement to $V(K)$ in $Y(K)$ has measure zero,
and so
$$\lan\varphi,\varphi\ran=\int_{Y(K)} p_1^*\varphi\cdot p_2^*\ov{\varphi}=\int_{V(K)} p_1^*(\varphi|_{\wt{U}})\cdot p_2^*(\ov{\varphi}|_{\wt{U}}),$$
(and similarly for $|\varphi|$ instead of $\varphi$).
Now the assertion follows from Lemma \ref{independence-lem} applied to the identity covering $U\to U$ and the smooth morphism $\wt{U}\to U$.
\end{proof}


Next, we have a general result that says that stacks smooth and representable over $L^2$-stacks are themselves $L^2$.

\begin{prop} Let $f:\XX\to \YY$ be a smooth representable morphism of algebraic stacks over $K$ such that $\YY$ is an admissible $L^2$-stacks, and $f(K)$ is essentially surjective.
Then $\XX$ is also an admissible $L^2$-stack.
\end{prop}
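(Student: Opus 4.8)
The plan is to pull back an admissible covering of $\YY$ along $f$, and then to verify the $L^2$-property of the resulting covering of $\XX$ pointwise, reducing the fibrewise estimate to Cauchy--Schwarz together with the $L^2$-property already known for $\YY$. First I would fix an admissible covering $\rho\colon Y\to\YY$ realizing the $L^2$-property of $\YY$, so that by Proposition \ref{L2-prop-def} every Schwartz half-density on $Y$ is of class $L^2$ relative to $\YY$. Set $X:=\XX\times_{\YY}Y$, with projections $\pi_X\colon X\to\XX$ and $q\colon X\to Y$. Since $f$ is representable, $X$ is a smooth algebraic space over $K$ (treated as a variety, all constructions being local in the analytic topology); $\pi_X$ is the base change of $\rho$, hence a smooth covering, and $q$ is the base change of $f$, hence smooth. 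For any $Z\to\XX$ one has a canonical identification $X\times_{\XX}Z\simeq Y\times_{\YY}Z$ (with $Z\to\YY$ through $f$), and the latter is surjective onto $Z$ on $K$-points because $\rho$ is admissible; essential surjectivity of $f(K)$ moreover makes $q$ surjective on $K$-points. Thus $\pi_X$ is an admissible covering and $\XX$ is admissible. (When $\XX$ is of infinite type one exhausts $\YY=\bigcup_j\VV_j$ by finite-type $L^2$-opens and works with $f^{-1}(\VV_j)\to\VV_j$, reducing to the finite-type case.)

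By Lemma \ref{local-lem}(i) it is enough to produce, for each $x\in X(K)$, a section $\varphi\ge 0$ in $\SS(X(K),|\om_X|^{1/2}\ot|\om_{\pi_X}|^{1/2})$ with $\varphi(x)>0$ and $\lan\varphi,\varphi\ran<\infty$. Base change gives $\om_{\pi_X}\simeq q^*\om_\rho$ and $\om_q\simeq\pi_X^*\om_f$, whence
\[
|\om_X|^{1/2}\ot|\om_{\pi_X}|^{1/2}\simeq|\om_q|^{1/2}\ot q^*\bigl(|\om_Y|^{1/2}\ot|\om_\rho|^{1/2}\bigr).
\]
I would therefore take $\varphi:=\theta\cdot q^*\eta$, where $\theta\ge 0$ is a compactly supported section of $|\om_q|^{1/2}$ with $\theta(x)>0$, and $\eta\ge 0$ is a Schwartz half-density on $Y$ (relative to $\YY$) with $\eta(q(x))>0$; then $\varphi$ is Schwartz, nonnegative, and positive at $x$.

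To evaluate $\lan\varphi,\varphi\ran$, write $Z:=X\times_{\XX}X\simeq\XX\times_{\YY}W$ with $W:=Y\times_{\YY}Y$, let $\beta\colon Z\to W$ be the projection (a base change of $f$), and let $p_1,p_2\colon Z\to X$, $r_1,r_2\colon W\to Y$ be the projections, so that $q\circ p_i=r_i\circ\beta$ and the fibre of $\beta$ over $(y_1,y_2)$ is $f^{-1}(\rho(y_1))\simeq q^{-1}(y_1)\simeq q^{-1}(y_2)$ via $p_1,p_2$. Substituting $\varphi=\theta\cdot q^*\eta$ and integrating first over the fibres of $\beta$ (Lemma \ref{integration-lem}(iv)), the integrand factors as a fibrewise $\theta$-density times the $\beta$-pullback of the $\YY$-pairing integrand of $\eta$, giving
\[
\lan\varphi,\varphi\ran=\int_{W(K)} h\cdot\bigl(r_1^*\eta\cdot r_2^*\ov{\eta}\bigr)\cdot(\YY\text{-twist}),\qquad h(y_1,y_2)=\int_{f^{-1}(\rho(y_1))}p_1^*\theta\cdot p_2^*\ov{\theta}.
\]

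Via the isomorphisms $p_i\colon f^{-1}(\rho(y_i))\xrightarrow{\sim}q^{-1}(y_i)$, the Cauchy--Schwarz inequality on the fibres of $q$ (as in the proof of Lemma \ref{CS-lem}) yields $0\le h(y_1,y_2)\le g(y_1)^{1/2}g(y_2)^{1/2}$ with $g(y):=\int_{q^{-1}(y)}|\theta|^2$; since $\theta$ is Schwartz, $g$ is bounded, say $g\le C$, so $h\le C$. As $\eta\ge 0$, the $\YY$-integrand is nonnegative, hence $\lan\varphi,\varphi\ran\le C\,\lan\eta,\eta\ran<\infty$, the finiteness being exactly the $L^2$-property of $\YY$. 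Lemma \ref{local-lem}(i) then shows every Schwartz half-density on $X$ is $L^2$ relative to $\XX$, and combined with admissibility this proves that $\XX$ is an admissible $L^2$-stack. The main obstacle is the bookkeeping in the factorization of the integrand: one must verify that the positive isomorphisms $\a_1,\a_2$ defining the $\XX$-pairing decompose, under $\om_X\simeq\om_q\ot q^*\om_Y$ and $\om_{\pi_X}\simeq q^*\om_\rho$, as the fibre twist appearing in $h$ times the $\beta$-pullback of the isomorphisms defining the $\YY$-pairing --- an explicit but routine computation in the spirit of Lemma \ref{quotient-lem}.
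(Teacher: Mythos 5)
Your proposal is correct and follows essentially the same route as the paper's proof: pull back the admissible covering to $X=\XX\times_{\YY}Y$, check admissibility by base change, compare $X\times_{\XX}X\to Y\times_{\YY}Y$, apply Cauchy--Schwarz in the fibres (Lemma \ref{CS-lem} together with the base-change Lemma \ref{integration-lem}(iii)), and conclude from the $L^2$-property of $\YY$. The only difference is packaging: the paper establishes the distributional inequality $f'_*(p_1^*\varphi\cdot p_2^*\varphi)\le p_1^*(f_*(\varphi^2))^{1/2}\cdot p_2^*(f_*(\varphi^2))^{1/2}$ directly for an arbitrary nonnegative Schwartz section $\varphi$ (so your reduction via Lemma \ref{local-lem}(i) to product-form sections $\theta\cdot q^*\eta$ and the sup bound $g\le C$ are unnecessary, though harmless and correct --- note that your bound amounts to the paper's estimate after absorbing $g^{1/2}$ into $\eta$, i.e.\ $f_*(\varphi^2)=g\cdot\eta^2$), and the twist bookkeeping you defer is exactly the positive isomorphism $p_1^*\LL\ot p_2^*\LL\simeq|\om_{Y'}|$ that the paper likewise records without detailed proof.
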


\begin{proof}
Let $Y\to \YY$ be an admissible smooth covering, and let $X:=\XX\times_{\YY} Y$. We denote by $f:X\to Y$ the smooth map induced by $f:\XX\to \YY$.
Then it is easy to see that $X\to \XX$ is also an admissible smooth covering.
Furthermore, for $X':=X\times_\XX X$ and $Y':=Y\times_\YY Y$ we have cartesian diagrams
\begin{equation}\label{X'Y'XY-cart-diag}
\begin{diagram}
X'&\rTo{f'}& Y'\\
\dTo{p_i}&&\dTo{p_i}\\
X&\rTo{f}&Y
\end{diagram}
\end{equation}
for $i=1,2$ (where $p_i$ are the natural projections), where $f'$ is induced by $f$ on both factors.

Now for a nonnegative $\varphi\in \SS(X(K),|\om_X|^{1/2}\ot |\om_{X/\XX}|^{1/2})$, we have to show convergence of $\int_{X'(K)} p_1^*\varphi\cdot p_2^*\varphi$.
Let us consider the smooth morphism $f':X'\to Y'$. It is enough to check that $f'_*(p_1^*\varphi\cdot p_2^*\varphi)$
converges as a distribution (which is automatically nonnegative), and that the value of this distribution on the function $1$ converges.
In other words, if $\ldots\sub C_n\sub C_{n+1}\sub\ldots \sub Y'(K)$ is an exhaustive sequence of open compacts, then we need to check that the monotone
sequence $f'_*(p_1^*\varphi\cdot p_2^*\varphi)(\de_{C_n})$ is bounded.

Note that $\varphi$ can be thought of as a section of $|\om_f|^{1/2}\ot f^*\LL$, where $\LL:=|\om_Y|^{1/2}\ot |\om_{Y/\YY}|^{1/2}$.
Hence, we can view $p_1^*\varphi$ and $p_2^*\varphi$ on $X'$ as sections of $|\om_{f'}|^{1/2}\ot (f')^*(p_1^*\LL)$ and $|\om_{f'}|^{1/2}\ot (f')^*(p_2^*\LL_2)$,
where we have a positive isomorphism $p_1^*\LL\ot p_2^*\LL\simeq |\om_Y'|$. 

Note that $\varphi^2$ is a nonnegative Schwartz section of $|\om_f|\ot f^*\LL^2$, hence, $f_*(\varphi^2)$ is a nonnegative Schwartz section of $\LL^2$, 
hence, $(f_*(\varphi^2))^{1/2}$ is a Schwartz section of $\LL$. By Lemma \ref{integration-lem}(iii), using the cartesian squares \eqref{X'Y'XY-cart-diag},
we deduce the equality of twisted nonnegative distributions
$$f'_*(p_i^*\varphi^2)=p_i^*f_*(\varphi^2)\in C^\infty.
$$

Applying the Cauchy-Schwarz inequality in fibers of $f'$ (see Lemma \ref{CS-lem}),
we deduce the inequality of nonnegative distributions on $Y'$,
$$f'_*(p_1^*\varphi\cdot p_2^*\varphi)\le p_1^*(f_*(\varphi^2))^{1/2}\cdot p_2^*(f_*(\varphi^2))^{1/2}.$$
Since the right-hand hand has finite value on $1$ (by the assumption that $\YY$ is $L^2$), so does the left-hand side.
\end{proof}

\begin{definition}
We say that a morphism of admissible stacks $\XX\to \YY$ is an {\it admissible $H$-gerbe}, where $H$ is a group scheme over $K$, if there exists an admissible covering $Y\to \YY$,
such that $\XX\times_{\YY} Y\simeq BH\times Y$.
\end{definition}

\begin{example}\label{adm-gerbe-ex}
Note that if $\XX\to \YY$ is an admissible $H$-gerbe then the map on $K$-points, $\XX(K)\to \YY(K)$ is essentially surjective. For example, the $\mu_2$-gerbe
$B\SL_2\to B\PGL_2$ is not admissible, since $B\PGL_2(K)$ has a nontrivial object corresponding to an order $2$ element of the Brauer group.
However, the natural morphism $[(\P^1)^r/\SL_2]\to [(\P^1)^r/\PGL_2]$ for $r\ge 1$ is an admissible $\mu_2$-gerbe: the equivalence $[(\P^1)^r/\PGL_2]\simeq [(\P^1)^{r-1}/B]$ shows that
$(\P^1)^r\to [(\P^1)^r/\PGL_2]$ is an admissible covering (and it splits the gerbe).
\end{example}

\begin{lemma}\label{gerbe-lem} 
Let $\XX\to \YY$ be an admissible $H$-gerbe, where $H$ is a finite discrete group such that $|H|$ is invertible in $K$. Then $\XX$ is $L^2$ if and only if $\YY$ is $L^2$.
Furthermore, in this case the natural maps of Schwartz spaces $\SS(\XX(K),|\om|^{\kappa})\to \SS(\YY(K),|\om|^{\kappa})$ are surjective.
\end{lemma}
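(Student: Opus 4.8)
The plan is to reduce everything to the split situation furnished by the gerbe structure and then to perform the integration over the finite band $H$ explicitly. Fix an admissible covering $g\colon Y\to\YY$ with $\XX\times_\YY Y\simeq BH\times Y=:X$, and let $p\colon X\to\XX$ denote the projection. A formal fiber-product manipulation shows that $p$ is itself an admissible covering of $\XX$: for any $T\to\XX$ one has $X\times_\XX T\simeq Y\times_\YY T$, which is surjective on $K$-points since $g$ is admissible. The same bookkeeping gives $X\times_\XX X\simeq BH\times Y'$ with $Y':=Y\times_\YY Y$, so that the $L^2$-pairing on $\XX$ computed through $p$ is an integral over $(BH\times Y')(K)$, while the $L^2$-pairing on $\YY$ computed through $g$ is an integral over $Y'(K)$; the whole problem is to compare these two along the gerbe.

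First I would make precise the integration over the fiber $BH$. Because $|H|$ is invertible in $K$ and $H$ is finite discrete, $f\colon\XX\to\YY$ is finite étale of relative dimension $0$, so $\om_f$ is canonically trivial and $f^*|\om_\YY|^{\kappa}\simeq|\om_\XX|^{\kappa}$. The delicate point is that the smooth atlas $\mathrm{pt}\to BH$ is \emph{not} surjective on $K$-points: the fiber of $\XX(K)\to\YY(K)$ over a point is a neutral $H$-gerbe over $K$, whose isomorphism classes of objects form a torsor under $H^1(K,H)$, each object having automorphism group $H$. For a local field $K$ and finite $H$ the set $H^1(K,H)$ is finite, so the density-theoretic volume of this fiber is the finite positive constant $c:=|H^1(K,H)|/|H|$, each $K$-form being weighted by $1/|\Aut|=1/|H|$ as in the groupoid count underlying Lemma \ref{quotient-lem}. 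This is exactly why Example \ref{BG-ex} and Lemma \ref{quotient-lem} cannot be quoted verbatim: they assume all $H$-torsors over $K$ are trivial, whereas here the nontrivial twisted forms must be summed explicitly.

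With this normalization in hand I would treat the surjectivity of the Schwartz maps, since it also fixes the constant. The natural pushforward $f_*\colon\SS(\XX(K),|\om|^{\kappa})\to\SS(\YY(K),|\om|^{\kappa})$ is fiberwise summation with $1/|\Aut|$ weights, and pullback $f^*$ preserves compact supports because $f$ has finite (proper) fibers; the computation above yields $f_*f^*=c\cdot\id$ with $c>0$, whence $f_*$ is surjective. For the $L^2$-equivalence I would then compare the two pairings: using Lemma \ref{independence-lem} and Lemma \ref{integration-lem}(iii) to push sections on $X=BH\times Y$ down along the gerbe, together with the identification $X\times_\XX X\simeq BH\times Y'$, one gets $\lan|\varphi|,|\varphi|\ran_\XX=c\,\lan f_*|\varphi|,f_*|\varphi|\ran_\YY$, and the same for the genuine pairings once finite. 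Thus a Schwartz half-density on $X$ is $L^2$ relative to $\XX$ if and only if its pushforward is $L^2$ relative to $\YY$; combined with surjectivity of $f_*$ this gives the implication $\XX$ is $L^2\Rightarrow\YY$ is $L^2$, and Proposition \ref{L2-prop-def} (which permits changing the admissible covering once the property is established) yields the converse.

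To bridge the stacky covering $X=BH\times Y$ and the honest variety coverings demanded by the definition of an $L^2$-stack, I would argue locally: since $\XX$ is admissible it carries a variety covering, and by Lemma \ref{local-lem}(i) it suffices, at each $K$-point, to exhibit one nonnegative Schwartz half-density that is positive there and $L^2$ relative to $\XX$; such test sections are produced by pulling back from $Y$ and averaging over the finite set of $H$-twists, again using finiteness of $H^1(K,H)$ and invertibility of $|H|$. The hard part throughout is precisely this last bookkeeping, namely correctly accounting for the finitely many nontrivial $H$-torsors over $K$ (equivalently, the failure of $\mathrm{pt}\to BH$ to be surjective on $K$-points) while tracking the relative-dualizing and half-density twists across the gerbe.
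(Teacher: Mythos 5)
There is a genuine gap, and it sits at the center of your argument. The one observation the paper's proof turns on, and which you never make, is that the splitting $\XX\times_\YY Y\simeq BH\times Y$ has a section (the trivial torsor), so that $Y\to\YY$ factors through a smooth map $Y\to\XX$: the \emph{same variety} $Y$ is simultaneously a covering of $\XX$ and of $\YY$, with $\om_{Y/\XX}\simeq\om_{Y/\YY}$ because $\om_{BH}$ is trivial. This keeps the whole computation inside the paper's framework, where Schwartz spaces and the $L^2$-pairing are defined only through coverings by varieties. Surjectivity of $\SS(\XX(K),|\om|^{\kappa})\to \SS(\YY(K),|\om|^{\kappa})$ is then immediate, since the surjection from $\SS(Y(K),\cdot)$ onto $\SS(\YY(K),\cdot)$ factors through $\SS(\XX(K),\cdot)$ --- no pushforward formula and no count of $H^1(K,H)$ is needed; and the two pairings become integrals of honest densities $\eta'$, $\eta$ over the spaces $(Y\times_\XX Y)(K)$ and $(Y\times_\YY Y)(K)$, related by $\eta'=f^*\eta$ along the finite \'etale map $f:Y\times_\XX Y\to Y\times_\YY Y$ of degree $|H|$ (the pullback of the diagonal $\XX\to\XX\times_\YY\XX$). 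Your proposal instead integrates over the stacky groupoid $(BH\times Y')(K)$ with a hand-made measure weighting each of the $|H^1(K,H)|$ torsor classes by $1/|\Aut|$; no such integration is defined in the paper, and none is needed.

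Worse, the identity you build everything on, $\lan|\varphi|,|\varphi|\ran_{\XX}=c\,\lan f_*|\varphi|,f_*|\varphi|\ran_{\YY}$ with the universal constant $c=|H^1(K,H)|/|H|$, is false. Test it on the $\mu_2$-gerbe $[(\P^1)^r/\SL_2]\to[(\P^1)^r/\PGL_2]$ with $Y=(\P^1)^r$: there $Y\times_\XX Y\simeq \SL_2\times(\P^1)^r$ and $Y\times_\YY Y\simeq \PGL_2\times(\P^1)^r$, so by Lemma \ref{quotient-lem} the two pairings are integrals over $\SL_2(K)$ and over $\PGL_2(K)$ of integrands of the form $(\varphi,g^*\varphi)\,\de(g)^{1/2}dg$. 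Since $\SL_2(K)\to\PGL_2(K)$ is not surjective (its image is the proper open subgroup of classes with square determinant --- exactly the nontrivial-torsor phenomenon you correctly flagged), the $\XX$-pairing sees the integrand only over that subgroup, with multiplicity $2$, while the $\YY$-pairing also integrates over the remaining cosets, where the integrand involves translates $g_i^*\varphi$; the ratio of the two pairings therefore depends on $\varphi$ and equals no constant $c$. Consequently both your surjectivity argument ($f_*f^*=c\cdot\id$) and your $L^2$-comparison collapse, and your appeal to Proposition \ref{L2-prop-def} for the converse direction is a non sequitur: that proposition compares coverings of a single fixed stack, it does not transfer the $L^2$-property between $\XX$ and $\YY$. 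What is true, and what the paper uses, is only an \emph{equivalence of convergence} of $\int\eta$ and $\int f^*\eta$ along the degree-$|H|$ \'etale map $f$ --- not a proportionality of values --- and this is where the finiteness of $H$ genuinely enters, not through a mass formula on the fibers of the gerbe.
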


\begin{proof}
First we observe that the diagonal map $\XX\to \XX\times_\YY \XX$ is a finite \'etale map of order $|H|$. Indeed, this is a local statement, so this follows from the similar statement about the 
diagonal map $BH\to BH\times BH$.

Let $Y\to \YY$ be an admissible smooth covering such that the $H$-gerbe becomes trivial over $Y$. Then the map $Y\to \YY$ factors through a smooth map $Y\to \XX$. This
immediately implies surjectivity of the maps of Schwartz spaces of $\kappa$-densities.

On the other hand, the cartesian diagram
\begin{diagram}
Y\times_\XX Y&\rTo{f}& Y\times_\YY Y\\
\dTo{}&&\dTo{}\\
\XX&\rTo{}& \XX\times_\YY \XX
\end{diagram}
shows that the map $f:Y\times_\XX Y\to Y\times_\YY Y$ is an \'etale covering of degree $|H|$.

Note also that triviality of $\om_{BH}$ implies that $\om_{\XX/YY}|_Y$ is trivial, hence, $\om_{Y/\XX}\simeq \om_{Y/\YY}$. Thus, 
$$\SS(Y(K),|\om_Y|^{1/2}\ot |\om_{Y/\XX}|^{1/2})\simeq \SS(Y(K),|\om_Y|^{1/2}\ot |\om_{Y/\YY}|^{1/2}).$$
The condition of being $L^2$ for $\YY$ (resp., for $\XX$) means that a certain nonnegative density $\eta$ on $(Y\times_\YY Y)(K)$ (resp., $\eta'$ on $(Y\times_\XX Y)(K)$)
associated with an element of the above space, has convergent integral.
It is easy to check that the the densities $\eta$ and $\eta'$ are related by $\eta'=f^*\eta$.
Since $f_*(f^*\eta)=|H|\cdot \eta$, convergence of the integral of $\eta$ is equivalent to that of $f^*\eta$. 
\end{proof}

\subsection{Examples for linear actions}



Below we denote by $O\sub K$ the ring of integers, and by $t\in O$ a generator of the maximal ideal.

Let $G$ be a split reductive group over $K$. Then $G$ is defined over $O$, so we have the maximal compact subgroup $G(O)\sub G(K)$. 
Let $V$ be an (algebraic) finite-dimensional linear representation of $G$ over $K$.

There always exists a $G(O)$-invariant lattice $\La_0$ in $V$.
We can present $V(K)$ as an increasing union of $G(O)$-invariant
$O$-lattices $t^{-n}\La_0\sub V$. Let $|dv|$ denote the positive measure on $V(K)$ associated with a trivialization of $\det(V)$. Then
for any $g\in G(K)$, we have $g^*|dv|=|{\det}_V(g)|\cdot |dv|$. Thus, for any $G(O)$-invariant lattice $\La\sub V$, we have 
$$(\de_{\La}\cdot |dv|^{1/2}, g^*(\de_{\La}\cdot |dv|^{1/2}))=|{\det}_V(g)|^{1/2}\cdot \vol(\La\cap g^{-1}\La),$$
where we take volume is computed with respect to $|dv|$.
Thus, $\de_{\La}\cdot |dv|^{1/2}$ is $L^2$ if and only if the integral 
\begin{equation}\label{La-integral-eq}
\int_{g\in G(K)} |{\det}_V(g)|^{1/2}\cdot \de(g)^{1/2}\cdot \vol(\La\cap g^{-1}\La) dg
\end{equation}
(where $dg$ is the right-invariant Haar measure) converges.

This leads to the following criterion for $[V/G]$ to be an $L^2$-stack for a split reductive group $G$ in terms of the character of $V$.
Let $T\sub G$ denote a split maximal torus, and let $X_\bullet(T)$ denote the coweight lattice, with $X_\bullet(T)_+\sub X_\bullet(T)$ the subset
of dominant coweights. We denote by $2\rho$ the sum of positive roots.

\begin{prop}\label{weight-crit-prop} 
In the above situation the stack $[V/G]$ is $L^2$ if and only if the series
\begin{align*}
&\sum_{\nu\in X_\bullet(T)_+}q^{\lan 2\rho,\nu\ran-\frac{1}{2}\sum_{\la}\lan \la,\nu\ran\dim(V_\la)+\sum_{\la: \lan \la,\nu\ran<0}\lan \la,\nu\ran \dim(V_\la)}\\
&=
\sum_{\nu\in X_\bullet(T)_+}q^{\lan 2\rho,\nu\ran-\frac{1}{2}\sum_{\la:\lan \la,\nu\ran>0}\lan \la,\nu\ran\dim(V_\la)+\frac{1}{2}\sum_{\la: \lan \la,\nu\ran<0}\lan \la,\nu\ran \dim(V_\la)}
\end{align*}
converges.
\end{prop}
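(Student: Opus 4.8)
The plan is to evaluate the orbit integral \eqref{La-integral-eq} by means of the Cartan decomposition and to match its terms with those of the series. First I would record the reduction already in place: by the discussion preceding the statement, for a single $G(O)$-invariant lattice $\Lambda$ the half-density $\de_\Lambda\cdot|dv|^{1/2}$ is $L^2$ precisely when \eqref{La-integral-eq} converges, and replacing $\Lambda$ by $t^{-n}\Lambda_0$ only rescales the integrand by the constant $q^{n\dim V}$. Since the lattices $t^{-n}\Lambda_0$ are $G(O)$-invariant and exhaust $V(K)$, Lemma \ref{local-lem}(i) shows that $[V/G]$ is $L^2$ if and only if \eqref{La-integral-eq} converges for one (hence any) such $\Lambda$. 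Thus the whole problem reduces to computing this integral.

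Next I would apply the Cartan decomposition $G(K)=\bigsqcup_{\nu\in X_\bullet(T)_+}G(O)\,t^\nu\,G(O)$, where $t^\nu=\nu(t)$, and observe that the integrand of \eqref{La-integral-eq} is \emph{constant} on each double coset $G(O)t^\nu G(O)$. Indeed, for $g=k_1 t^\nu k_2$ with $k_i\in G(O)$ one has $|{\det}_V(g)|=|{\det}_V(t^\nu)|$ because $G(O)$ preserves $\Lambda_0$ (so ${\det}_V(k_i)\in O^\times$); one has $\de(g)=1$ because a reductive group is unimodular; and $\vol(\Lambda\cap g^{-1}\Lambda)=\vol(\Lambda\cap t^{-\nu}\Lambda)$ after absorbing $k_1$ into $\Lambda$ and applying the volume-preserving map $k_2$. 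Hence
$$\int_{G(K)}|{\det}_V(g)|^{1/2}\de(g)^{1/2}\vol(\Lambda\cap g^{-1}\Lambda)\,dg=\sum_{\nu\in X_\bullet(T)_+}\vol\bigl(G(O)t^\nu G(O)\bigr)\,|{\det}_V(t^\nu)|^{1/2}\,\vol(\Lambda\cap t^{-\nu}\Lambda).$$

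Then I would evaluate the three $\nu$-dependent factors. The double-coset volume satisfies $\vol(G(O)t^\nu G(O))\asymp q^{\lan 2\rho,\nu\ran}$, with multiplicative correction factors bounded above and below by constants depending only on $G$ (the standard count of $G(O)$-cosets in $G(O)t^\nu G(O)$). Decomposing $V=\bigoplus_\la V_\la$ into $T$-weight spaces gives $|{\det}_V(t^\nu)|^{1/2}=q^{-\frac12\sum_\la\lan\la,\nu\ran\dim V_\la}$. For the last factor I would compute $\vol(\Lambda\cap t^{-\nu}\Lambda)$ after replacing $\Lambda$ by a commensurable, weight-adapted lattice $\Lambda_1=\bigoplus_\la\Lambda_\la$ with $\Lambda_\la\subset V_\la$; then $t^{-\nu}$ acts by $t^{-\lan\la,\nu\ran}$ on $V_\la$, so the intersection splits weight by weight and the $\la$-component contributes the factor $q^{\lan\la,\nu\ran\dim V_\la}$ when $\lan\la,\nu\ran<0$ and a constant otherwise, giving $\vol(\Lambda\cap t^{-\nu}\Lambda)\asymp q^{\sum_{\la:\lan\la,\nu\ran<0}\lan\la,\nu\ran\dim V_\la}$. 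Multiplying the three factors produces exactly the exponent of the first displayed series, so convergence of \eqref{La-integral-eq} is equivalent to convergence of that series; the second form follows by splitting $\sum_\la\lan\la,\nu\ran\dim V_\la$ according to the sign of $\lan\la,\nu\ran$ and combining with the last sum.

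The main obstacle is the middle volume computation: a $G(O)$-invariant lattice need not be the direct sum of its intersections with the weight spaces, so the weight-by-weight evaluation is not valid for $\Lambda$ itself. The point to check carefully is that passing to a weight-adapted lattice $\Lambda_1$ changes $\vol(\Lambda\cap t^{-\nu}\Lambda)$ only by a factor bounded \emph{uniformly in} $\nu$: choosing $m$ with $t^m\Lambda\subseteq\Lambda_1\subseteq t^{-m}\Lambda$, one gets $t^m(\Lambda\cap t^{-\nu}\Lambda)\subseteq\Lambda_1\cap t^{-\nu}\Lambda_1\subseteq t^{-m}(\Lambda\cap t^{-\nu}\Lambda)$, so the two volumes differ by at most $q^{\pm m\dim V}$ regardless of $\nu$. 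Together with the bounded correction in the double-coset volume, this ensures that the $\asymp$ estimates assemble into a genuine two-sided comparison between the integral and the series, which is all that is needed for the convergence criterion.
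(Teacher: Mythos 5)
Your proposal is correct and takes essentially the same route as the paper's proof: reduction to the integral \eqref{La-integral-eq} for a single $G(O)$-invariant lattice via Lemma \ref{local-lem}(i), Cartan decomposition with the integrand constant on double cosets (using unimodularity of $G$), the volume estimate $\vol(G(O)\nu(t)G(O))\asymp q^{\lan 2\rho,\nu\ran}$, and a weight-by-weight evaluation of $\vol(\La\cap \nu(t)^{-1}\La)$ up to constants uniform in $\nu$. The only cosmetic difference is in the lattice comparison: the paper sandwiches $\La$ between two weight-adapted lattices $\La_1\sub\La\sub\La_2$ and bounds the volume from both sides, whereas you use a single commensurable weight-adapted lattice with $t^m\La\subseteq\La_1\subseteq t^{-m}\La$ and the uniform distortion bound $t^m(\La\cap t^{-\nu}\La)\subseteq\La_1\cap t^{-\nu}\La_1\subseteq t^{-m}(\La\cap t^{-\nu}\La)$; both yield the same two-sided comparison between the integral and the series.
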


\begin{proof}
As we have seen, $[V/G]$ is $L^2$ if and only if the integral \eqref{La-integral-eq} converges for every lattice $\La$ of the form $t^{-n}\La_0$, $n>0$.
Furthermore, since $\vol(t^{-n}\La_0\cap g^{-1}(t^{-n}\La_0))=q^n\vol(\La_0\cap g^{-1}\La_0)$, it is enough to consider one $G(O)$-invariant lattice $\La$.

We have the weight decomposition $V=\bigoplus_\la V_\la$ with respect to $T$, 
and we can find $T(\OO)$-invariant lattices
$$\La_i=\bigoplus \La_{i,\la}, \ \ i=1,2,$$
where $\La_{i,\la}$ are $O$-lattices in $V_\la$, such that 
$$\La_1\sub \La\sub \La_2.$$

By $G(O)$-invariance of $\La$, the volume of $\La\cap g^{-1}\La$ depends only on the double $G(O)$-coset of $g$. Hence,
using the Cartan decomposition 
$$G(K)=\sqcup_{\nu\in X_\bullet(T)_+} G(O)\nu(t)G(O),$$
we can rewrite the integral \eqref{La-integral-eq} as 
$$\sum_{\nu\in X_\bullet(T)_+}\vol(G(O)\nu(t)G(O))\cdot |{\det}_V(\nu(t))|^{1/2}\cdot \vol(\La\cap \nu(t)^{-1}\La).$$ 

Since $\nu(t)$ preserves the weight decomposition of $V$ and acts as $t^{\lan\la,\nu\ran}$ on $V_\la$, we have
$$|{\det}_V(\nu(t))|^{1/2}=|t^{\sum_\la \lan\la,\nu\ran\dim(V_\la)}|^{1/2}=q^{-\frac{1}{2}\sum_{\la}\lan \la,\nu\ran\dim(V_\la)},$$
and
$$\vol(\La_i\cap \nu(t)^{-1}\La_i)=\prod_\la \vol(\La_{i,\la}\cap t^{-\lan\la,\nu\ran}\La_{i,\la}),$$
for $i=1,2$. We have
$$\La_{i,\la}\cap t^{-\lan\la,\nu\ran}\La_{i,\la}=\begin{cases} t^{-\lan\la,\nu\ran}\La_{i,\la} &\lan\la,\nu\ran<0,\\ \La_{i,\la} &\lan\la,\nu\ran\ge 0.\end{cases}$$
Hence, 
$$\vol(\La_{i,\la}\cap t^{-\lan\la,\nu\ran}\La_{i,\la})/\vol(\La_{i,\la})=\begin{cases} q^{\lan\la,\nu\ran} &\lan\la,\nu\ran<0\\ 1 &\lan\la,\nu\ran\ge 0,\end{cases}$$
and so
$$\vol(\La_i\cap \nu(t)^{-1}\La_i)=\vol(\La_i)\cdot q^{\sum_{\lan\la,\nu\ran<0} \lan\la,\nu\ran},$$
for $i=1,2$. It follows that
$$\vol(\La_1)\cdot q^{\sum_{\lan\la,\nu\ran<0} \lan\la,\nu\ran}\le \vol(\La\cap \nu(t)^{-1}\La)\le \vol(\La_2)\cdot q^{\sum_{\lan\la,\nu\ran<0} \lan\la,\nu\ran}$$

It remains to use the fact that
$$C_1\cdot q^{\lan 2\rho,\nu\ran}\le \vol(G(O)\nu(t)G(O))\le C_2\cdot q^{\lan 2\rho,\nu\ran}$$
which follows from the formula
$$\vol(G(O)\nu(t)G(O))=\frac{|(G/P_\nu)(k)|}{q^{\dim(G/P_\nu)}}\cdot q^{\lan 2\rho,\nu\ran},$$
where $P_\nu\sub G$ is the standard parabolic associated with $\nu$ (see \cite[Prop.\ 7.4]{Gross}, \cite[3.3.1]{Tits}). 
\end{proof}

Let us consider several examples of the use of Proposition \ref{weight-crit-prop}.
First, we consider the case when $G=T$ is an algebraic torus.

\begin{prop} Let $\rho:T\to \GL(V)$ be a linear representation of a split algebraic torus $T$, such that $\ker(\rho)$ is finite.
Then the stack $[V/T]$ is $L^2$.
\end{prop}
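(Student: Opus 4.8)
The plan is to apply the criterion of Proposition \ref{weight-crit-prop} with $G=T$ and to show that the resulting lattice series is dominated by an exponentially decaying sum over the coweight lattice.

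First I would specialize to the torus. Since $T$ has no roots, we have $2\rho=0$ and $X_\bullet(T)_+=X_\bullet(T)$, so the (symmetric) form of the series in Proposition \ref{weight-crit-prop} becomes
$$\sum_{\nu\in X_\bullet(T)} q^{-\frac12\sum_{\la:\lan\la,\nu\ran>0}\lan\la,\nu\ran\dim(V_\la)+\frac12\sum_{\la:\lan\la,\nu\ran<0}\lan\la,\nu\ran\dim(V_\la)}.$$
In the first sum $\lan\la,\nu\ran>0$ and in the second $\lan\la,\nu\ran<0$, so both contributions to the exponent are nonpositive, and combining them the exponent equals $-\tfrac12 f(\nu)$, where
$$f(\nu):=\sum_\la |\lan\la,\nu\ran|\,\dim(V_\la).$$

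Next I would observe that $f$ extends to a norm on the finite-dimensional real vector space $X_\bullet(T)_\R:=X_\bullet(T)\ot_\Z\R$. Indeed $f$ is nonnegative, homogeneous of degree one, and subadditive (a finite nonnegative combination of absolute values of linear functionals), hence a seminorm; positive-definiteness is exactly where the hypothesis enters. If $f(\nu)=0$, then $\lan\la,\nu\ran=0$ for every weight $\la$ of $V$. Now $\ker(\rho)=\bigcap_{\la:\,V_\la\ne 0}\ker(\la)$, and its finiteness is equivalent to the weights $\la$ of $V$ spanning $X^\bullet(T)\ot_\Z\mathbb{Q}$; hence $\nu$ is orthogonal to a $\mathbb{Q}$-spanning set and therefore $\nu=0$. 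Thus $f$ is a norm.

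Finally I would conclude convergence. Since all norms on $X_\bullet(T)_\R$ are equivalent, there is a constant $c>0$ with $f(\nu)\ge c\|\nu\|$ for a fixed Euclidean norm $\|\cdot\|$. As $q>1$, the series is bounded above by $\sum_{\nu\in X_\bullet(T)} q^{-\frac{c}{2}\|\nu\|}$, which converges because the number of lattice points with $\|\nu\|\le R$ grows only polynomially in $R$ while $q^{-cR/2}$ decays exponentially. By Proposition \ref{weight-crit-prop}, the stack $[V/T]$ is therefore $L^2$. The only genuine subtlety is the positive-definiteness of $f$, i.e.\ translating the finiteness of $\ker(\rho)$ into the statement that the weights of $V$ span the character lattice rationally; the remaining step is the elementary comparison of a lattice sum with a convergent geometric-type series.
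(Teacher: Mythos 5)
Your proof is correct, but it takes a genuinely different route from the paper's. You work intrinsically on $X_\bullet(T)$: you rewrite the exponent as $-\tfrac12 f(\nu)$ with $f(\nu)=\sum_\la|\lan\la,\nu\ran|\dim(V_\la)$, show that finiteness of $\ker(\rho)$ is equivalent to the weights spanning $X^\bullet(T)\ot_\Z\mathbb{Q}$ so that $f$ is a genuine norm, and conclude by comparing the lattice sum with $\sum_\nu q^{-\frac{c}{2}\|\nu\|}$ via norm equivalence and polynomial growth of lattice points. The paper instead uses the dual reformulation of the same hypothesis: since $\rho$ factors through the diagonal maximal torus $T'\sub\GL(V)$ with finite kernel, the cocharacter map $X_\bullet(T)\to X_\bullet(T')$ is injective, and the exponent for $(T,V)$ at $\nu$ equals the exponent for $(T',V)$ at its image; as all terms are positive, the series for $T$ is dominated by the series for $T'$, which factorizes exactly as the $\dim(V)$-th power of the one-dimensional sum $1+2\sum_{n>0}q^{-n/2}$. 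The paper's reduction buys an exact evaluation as a product of geometric series with no analytic estimates, while your argument buys robustness: it never invokes the product structure of the diagonal torus and applies verbatim to any series of the form $\sum_\nu q^{-\frac12 f(\nu)}$ with $f$ a norm. Both proofs pivot on the identical translation of the finite-kernel hypothesis, which you exploit on the character side and the paper exploits on the cocharacter side; your identification of positive-definiteness of $f$ as the one genuinely nontrivial point is exactly right.
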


\begin{proof}
In the case of tori, the weight criterion (see Proposition \ref{weight-crit-prop}) states that the $L^2$-property is equivalent to the convergence of the series
$$
\sum_{\nu\in X_\bullet(T)}q^{-\frac{1}{2}\sum_{\la:\lan \la,\nu\ran>0}\lan \la,\nu\ran\dim(V_\la)+
\frac{1}{2}\sum_{\la: \lan \la,\nu\ran<0}\lan \la,\nu\ran \dim(V_\la)}$$
If the representation is obtained by a restriction from a homomorphism of tori $T\to T'$ with the finite kernel, then the map
on cocharacters $X_\bullet(T)\to X_\bullet(T')$ is injective. Hence, convergence for $T'$ implies convergence fo $T$.
Thus, it is enough to check convergence for the action of the maximal torus of $\GL(V)$ on $V$. Then the sum becomes a power
of the corresponding sum for the $1$-dimensional case, 
$$1+2\sum_{n>0} q^{-n/2},$$
which converges.
\end{proof}

Next we consider the case when $V$ is the direct sum of several copies of the adjoint representation.

\begin{prop}\label{adjoint-prop} 
Let $G$ be a split semisimple group, $\fg$ its adjoint representation. Then $[\fg^{\oplus r}/G]$ is $L^2$ if and only if $r>1$.
\end{prop}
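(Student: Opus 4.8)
The plan is to apply the weight criterion of Proposition \ref{weight-crit-prop} directly to $V=\fg^{\oplus r}$ and thereby reduce the $L^2$-property to the convergence of an explicit series over dominant coweights. First I would record the weight decomposition of $V$. Under the split maximal torus $T$, the adjoint representation $\fg$ decomposes into the root spaces $\fg_\al$ (each of dimension $1$) together with the Cartan subalgebra $\fg_0=\fh$ of dimension $\ell:=\rk G$. Hence for $V=\fg^{\oplus r}$ we have $\dim(V_\al)=r$ for every root $\al$ and $\dim(V_0)=r\ell$, while all other weight spaces vanish.

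Next I would compute, for a dominant coweight $\nu\in X_\bullet(T)_+$, the exponent $\lan 2\rho,\nu\ran-\frac{1}{2}\sum_{\la}\lan\la,\nu\ran\dim(V_\la)+\sum_{\la:\lan\la,\nu\ran<0}\lan\la,\nu\ran\dim(V_\la)$ appearing in Proposition \ref{weight-crit-prop}. Since the sum of all roots is zero, the middle term vanishes: $\sum_\la\lan\la,\nu\ran\dim(V_\la)=r\sum_\al\lan\al,\nu\ran=0$. For the last term, the symmetry $\al\mapsto-\al$ of the root system together with the dominance of $\nu$ (so that $\lan\al,\nu\ran\ge 0$ precisely for the positive roots, whence $\sum_{\al>0}\lan\al,\nu\ran=\lan 2\rho,\nu\ran$) gives $\sum_{\la:\lan\la,\nu\ran<0}\lan\la,\nu\ran\dim(V_\la)=-r\lan 2\rho,\nu\ran$. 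Substituting collapses the whole exponent to $(1-r)\lan 2\rho,\nu\ran$. Thus $[\fg^{\oplus r}/G]$ is $L^2$ if and only if
$$\sum_{\nu\in X_\bullet(T)_+} q^{(1-r)\lan 2\rho,\nu\ran}$$
converges.

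Finally I would analyze this series. Because $G$ is semisimple, the simple roots form a $\mathbb{Q}$-basis of $X^\bullet(T)\ot\mathbb{Q}$, so the integer-valued linear functional $\lan 2\rho,-\ran$ is strictly positive on every nonzero dominant coweight. Grouping terms by the value $m=\lan 2\rho,\nu\ran\in\Z_{\ge 0}$, the series becomes $\sum_{m\ge 0} N(m)\,q^{(1-r)m}$, where $N(m)$ is the number of dominant coweights with $\lan 2\rho,\nu\ran=m$. The properness of $\lan 2\rho,-\ran$ on the finitely generated dominant cone forces $N(m)$ to grow at most polynomially in $m$ (of degree $\ell-1$). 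Since $q>1$, geometric decay beats polynomial growth exactly when $1-r<0$: the series converges for $r>1$ and diverges for $r\le 1$ (for $r=1$ it is literally $\sum_{\nu}1=\infty$). This yields the stated equivalence.

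I expect the last paragraph to be the main obstacle: the exponent computation is essentially mechanical, but the convergence dichotomy rests on establishing cleanly that $\lan 2\rho,-\ran$ is proper on the dominant cone (equivalently, strictly positive away from the origin) and that the associated lattice-point count $N(m)$ is polynomially bounded. Everything else is a routine substitution into Proposition \ref{weight-crit-prop}.
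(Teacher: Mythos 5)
Your proof is correct and takes essentially the same route as the paper's: both plug $V=\fg^{\oplus r}$ into Proposition \ref{weight-crit-prop}, observe that for dominant $\nu$ the roots with $\lan\a,\nu\ran<0$ are negative so the exponent collapses to $(1-r)\lan 2\rho,\nu\ran$, and conclude convergence exactly when $r>1$. The only divergence is in the final routine step: the paper evaluates the series as the product $\bigl(\sum_{n\ge 0}q^{-2(r-1)n}\bigr)^{s}$ by writing dominant coweights in coordinates, whereas you group terms by $m=\lan 2\rho,\nu\ran$ and bound the count $N(m)$ polynomially --- your version is actually slightly more robust, since the paper's identification of $X_\bullet(T)_+$ with $\Z_{\ge 0}^{s}$ and of $\lan 2\rho,\nu\ran$ with $2\sum_i n_i$ implicitly depends on the isogeny type of $G$, while your positivity-plus-polynomial-growth argument works verbatim for any split semisimple $G$.
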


\begin{proof}
Let $\Phi$ be the set of roots, $\Phi_+$ the subset of positive roots.
By Proposition \ref{weight-crit-prop}, we have to study the convergence of the sum
$$\sum_{\nu\in X_\bullet(T)_+} q^{\lan 2\rho,\nu\ran+\sum_{\a\in \Phi:\lan \a,\nu\ran<0}\lan \a,\nu\ran\cdot r}.$$
But for a dominant coweight $\nu$ and a root $\a$, we have $\lan \a,\nu\ran<0$ only if $\a$ is a negative root.
Hence,
$$\sum_{\a\in \Phi:\lan \a,\nu\ran<0}\lan \a,\nu\ran=\sum_{\a\in \Phi_+}\lan -\a,\nu\ran=-\lan 2\rho,\nu\ran.$$
Thus, our sum is
$$\sum_{\nu\in X_\bullet(T)_+} q^{-(r-1)\lan 2\rho,\nu\ran}=\sum_{n_1\ge 0,\ldots,n_s\ge 0}q^{-2(r-1)\sum_i n_i}=\bigl(\sum_{n\ge 0} q^{-2(r-1)n}\bigr)^s,$$
where $s$ is the rank of $G$. This sum converges if and only if $r>1$.
\end{proof}

Next, let us consider the case $G=\SL_2$.

\begin{prop} Let $V$ be a finite dimensional representation of $\SL_2$. Then $[V/\SL_2]$ is $L^2$ if and only if $\sum_{n>0} n\dim V_n>2$,
where $V=\bigoplus_{n\in \Z} V_n$ is the weight decomposition.
\end{prop}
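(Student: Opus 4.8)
The plan is to apply the weight criterion of Proposition \ref{weight-crit-prop} directly, specializing the combinatorial data to $G=\SL_2$. Take $T\sub \SL_2$ to be the diagonal torus, so that $X_\bullet(T)\cong \Z$ is generated by the coroot $\nu_1:t\mapsto \mathrm{diag}(t,t^{-1})$, the dominant cone is $X_\bullet(T)_+=\{m\nu_1:m\ge 0\}\cong \Z_{\ge 0}$, and the unique positive root $\a$ satisfies $\lan \a,\nu_1\ran=2$. Consequently $2\rho=\a$ and $\lan 2\rho,m\nu_1\ran=2m$. Writing the weight decomposition as $V=\bigoplus_{n\in\Z}V_n$, the weight $\la$ with $V_\la=V_n$ pairs with the coweight as $\lan\la,m\nu_1\ran=nm$, so for $m>0$ the sign of $\lan\la,\nu\ran$ is exactly the sign of $n$.

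First I would record the elementary fact that every finite-dimensional $\SL_2$-representation has symmetric weights, i.e.\ $\dim V_n=\dim V_{-n}$; hence, setting $S:=\sum_{n>0}n\dim V_n$, one has $\sum_{n<0}n\dim V_n=-S$. Substituting this into the symmetric form of the series in Proposition \ref{weight-crit-prop}, the exponent attached to $\nu=m\nu_1$ becomes
$$\lan 2\rho,\nu\ran-\frac{1}{2}\sum_{n>0}nm\dim V_n+\frac{1}{2}\sum_{n<0}nm\dim V_n=2m-\frac{1}{2}mS-\frac{1}{2}mS=m(2-S).$$
Thus the whole series collapses to the geometric series $\sum_{m\ge 0}q^{m(2-S)}$.

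It remains to note that since $q>1$, this geometric series converges if and only if its ratio $q^{2-S}$ is $<1$, that is, if and only if $S=\sum_{n>0}n\dim V_n>2$, which is the assertion. No step here is a genuine obstacle: the only points requiring care are the normalization of the coroot against the root (the pairing $\lan\a,\nu_1\ran=2$, which is what makes $\lan 2\rho,\nu\ran=2m$) and the invocation of weight symmetry, both of which are standard for $\SL_2$. The rest is the evaluation of a single geometric series.
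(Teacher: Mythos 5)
Your proof is correct and follows essentially the same route as the paper's: both specialize Proposition \ref{weight-crit-prop} to the coroot lattice of $\SL_2$, invoke the weight symmetry $\dim V_n=\dim V_{-n}$ to collapse the exponent to $m(2-\sum_{n>0}n\dim V_n)$, and conclude by convergence of the resulting geometric series. Your write-up merely makes explicit the normalizations ($\lan 2\rho, m\nu_1\ran = 2m$, $q>1$) that the paper leaves implicit.
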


\begin{proof} Indeed, in this case by Proposition \ref{weight-crit-prop}, we should consider the series
$$\sum_{m\ge 0} q^{2m-\frac{1}{2}\sum_{n>0}mn\dim(V_n)+\frac{1}{2}\sum_{n<0}mn\dim(V_n)}=
\sum_{m\ge 0} q^{m(2-\sum_{n>0}n\dim(V_n))},$$
where we used the fact that $\dim(V_{-n})=\dim(V_n)$. This immediately gives the result.
\end{proof}


\subsection{Configuration stacks for $\P^1$}

\begin{prop}\label{conf-L2-prop}
The stack $[(\P^1)^r/\SL_2]$ is $L^2$ for any $r\ge 3$.
\end{prop}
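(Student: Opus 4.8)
The plan is to verify the $L^2$-property directly from the definition, exploiting the fact that $(\P^1)^r(K)$ is compact. Take $X=(\P^1)^r$ with the tautological smooth covering $\pi:X\to[(\P^1)^r/\SL_2]$, let $\mu_0$ be the $\SL_2(O)$-invariant measure on $\P^1(K)$ (density $\min(1,|z|^{-2})\,|dz|$ in the affine coordinate $z$), and set $\mu=\mu_0\times\cdots\times\mu_0$. Then $\varphi:=\mu^{1/2}$ is a smooth, strictly positive, $\SL_2(O)$-invariant section of $|\om_X|^{1/2}$, automatically of compact support since $X(K)$ is compact. Because $\varphi>0$ everywhere, Lemma \ref{local-lem}(i) reduces the entire statement to the single finiteness $\lan\varphi\,|\vol_\fg|^{1/2},\varphi\,|\vol_\fg|^{1/2}\ran<\infty$. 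As $\SL_2$ is semisimple, its modular character $\de$ is trivial, so Lemma \ref{quotient-lem} rewrites this pairing as
$$\int_{\SL_2(K)}(\varphi,g^*\varphi)\,dg,\qquad (\varphi,g^*\varphi)=\int_{X(K)}\varphi\cdot\ov{g^*\varphi}.$$

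Next I would run the same bookkeeping as in the proof of Proposition \ref{weight-crit-prop}. By $\SL_2(O)$-invariance of both $\varphi$ and $\mu$, the quantity $(\varphi,g^*\varphi)$ depends only on the Cartan double coset of $g$; writing $\nu_n(t)=\mathrm{diag}(t^n,t^{-n})$ for $\nu_n\in X_\bullet(T)_+$ and using $\vol(\SL_2(O)\nu_n(t)\SL_2(O))\asymp q^{\lan 2\rho,\nu_n\ran}=q^{2n}$, the integral collapses to
$$\sum_{n\ge0}\vol\bigl(\SL_2(O)\nu_n(t)\SL_2(O)\bigr)\cdot(\varphi,\nu_n(t)^*\varphi).$$
Since $\varphi$ and $\mu$ are products over the $r$ factors and $\nu_n(t)$ acts diagonally, this pairing factorizes as $(\varphi,\nu_n(t)^*\varphi)=I_n^{\,r}$, where $I_n:=\int_{\P^1(K)}\mu_0^{1/2}\cdot\ov{\nu_n(t)^*\mu_0^{1/2}}$ is the one-factor pairing.

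The technical heart is to estimate $I_n$. In the coordinate $z$, the torus $\nu_n(t)$ acts by $z\mapsto t^{2n}z$, so splitting the integral over the shells $\{|z|=q^k\}$ and tracking the density $\min(1,|z|^{-2})$ of $\mu_0$ against its $\nu_n(t)$-pullback gives $I_n\asymp (n+1)\,q^{-n}$; the linear factor in $n$ arises from the shells $0\le k\le 2n$ on which the two densities overlap, and only the upper bound $I_n\le C(n+1)q^{-n}$ is needed. Consequently
$$\sum_{n\ge0}q^{2n}\,I_n^{\,r}\ \asymp\ \sum_{n\ge0}(n+1)^r\,q^{-(r-2)n},$$
which converges precisely when $r-2>0$, i.e.\ for $r\ge3$. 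This proves the proposition, and the threshold matches the weight count $\sum_{n>0}n\dim V_n=r$ that Proposition \ref{weight-crit-prop} would assign to the linearized representation $(\A^2)^{\oplus r}$ of $\SL_2$.

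The main obstacle is the estimate $I_n\asymp(n+1)q^{-n}$: one must carry out the $p$-adic shell integration and, in particular, correctly weigh the ``equatorial'' region $|z|\asymp1$, which contributes the polynomial factor in $n$. Everything else—the reduction to a single strictly positive half-density via Lemma \ref{local-lem}(i), the vanishing of $\de_{\SL_2}$, and the Cartan-and-volume accounting—is routine given Lemmas \ref{quotient-lem} and \ref{local-lem} together with the computation inside Proposition \ref{weight-crit-prop}. (If one also wants the $\PGL_2$ statement, it follows from this one through the admissible $\mu_2$-gerbe of Example \ref{adm-gerbe-ex} and Lemma \ref{gerbe-lem}.)
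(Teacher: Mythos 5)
Your proof is correct, but it takes a genuinely different route from the paper's. The paper never integrates on $(\P^1)^r$ directly: it embeds $[(\P^1)^r/\SL_2]$ as an open substack of the linear quotient $\XX_r=[V^r/(\SL_2\times\G_m^r)]$, where $V$ is the standard representation, and then applies the weight criterion of Proposition \ref{weight-crit-prop} to that enlarged group. The auxiliary $\G_m^r$-coweights $n_i$ there play exactly the role of your shell parameter (the valuation of the affine coordinate on each $\P^1$-factor), and the paper's one-variable sum $\sum_m q^{\ell_n(m)-m}=(an+b)q^{-n}$, computed over the three ranges $m<-n$, $-n\le m<n$, $m\ge n$, is the precise counterpart of your estimate $I_n\asymp (n+1)q^{-n}$ from the three shell regions; the resulting series $\sum_{n\ge 0}q^{(2-r)n}(an+b)^r$ is the same as yours, with the same threshold $r\ge 3$. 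What your argument buys: it is self-contained at the level of $[(\P^1)^r/\SL_2]$ itself, exploiting compactness of $(\P^1)^r(K)$ and the explicit $\SL_2(O)$-invariant measure, so a single strictly positive Schwartz half-density settles everything via Lemma \ref{local-lem}(i) and Lemma \ref{quotient-lem} (the vanishing of $\de_{\SL_2}$ being automatic for a semisimple group); you should, however, add one sentence verifying that $(\P^1)^r\to[(\P^1)^r/\SL_2]$ is an admissible covering, as required by Proposition \ref{L2-prop-def} --- this holds since $H^1(K',\SL_2)=1$ for every field $K'$, by the exact sequence $1\to\SL_2\to\GL_2\to\G_m\to 1$ together with $H^1(K',\GL_2)=1$ and surjectivity of $\det$ on points. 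What the paper's route buys: the open-substack trick delegates all the $p$-adic bookkeeping (Cartan decomposition, double-coset volumes, lattice intersections) to the single general criterion of Proposition \ref{weight-crit-prop}, which is then reused verbatim for the other linear examples and for the configuration stacks $[(\P^{n-1})^r/\SL_n]$ in the appendix, where a direct shell integration on $\P^{n-1}(K)$ would be considerably messier. Your closing remark that the threshold matches $\sum_{n>0}n\dim V_n=r$ for the linearized representation is exactly the mechanism the paper runs in the forward direction.
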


\begin{proof}
Let $V$ be the standard $2$-dimensional representation of $\SL_2$.
It is enough to prove that $\XX_r:=[V^r/(\SL_2\times\G_m^r)]$ is $L^2$ for $r\ge 3$ (since $[(\P^1)^r/\SL_2]$ is an open substack in $\XX_r$).
Let us enumerate the dominant coweights of $\SL_2\times\G_m^r$ by $(n,n_1,\ldots,n_r)$, where $n\ge 0$ is the dominant coweight of $\SL_2$.
The natural basis of $V^r$ gives $2r$ functionals $n_i\pm n$ on these coweights. 
For $n\ge 0$, let $\ell_n(m)$ denote the sum of negative numbers among the pair $(m+n,m-n)$. Then
$$\ell_n(m)=\begin{cases} 2m, & m\le -n, \\ m-n, & -n\le m\le n, \\ 0, m\ge n.\end{cases}.$$

By the weight criterion (see Proposition \ref{weight-crit-prop}) we need to check the convergence of the series
$$\sum_{n\ge 0, n_1,\ldots,n_r} q^{2n-\sum_i n_i+\sum_i \ell_n(n_i)}=\sum_{n\ge 0}q^{2n}\cdot (\sum_m q^{\ell_n(m)-m})^r.$$
We have
\begin{align*}
&\sum_m q^{\ell_n(m)-m}=\sum_{m<-n} q^m + \sum_{-n\le m<n} q^{-n}+ \sum_{m\ge n} q^{-m}\\
&=(an+b)q^{-n},
\end{align*}
for some constants $a$ and $b$.
Hence, our original series is equal to
$$\sum_{n\ge 0}q^{(2-r)n}(an+b)^r,$$
which converges for $r\ge 3$.
\end{proof}

\begin{cor}
The stack $[(\P^1)^r/\PGL_2]$ is $L^2$ for any $r\ge 3$.
\end{cor}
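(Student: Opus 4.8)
The plan is to deduce this corollary directly from Proposition \ref{conf-L2-prop} by passing through the $\mu_2$-gerbe relating the two quotient stacks. The key observation, already recorded in Example \ref{adm-gerbe-ex}, is that the natural morphism
$$[(\P^1)^r/\SL_2]\to [(\P^1)^r/\PGL_2]$$
is an admissible $\mu_2$-gerbe for every $r\ge 1$: indeed, the equivalence $[(\P^1)^r/\PGL_2]\simeq [(\P^1)^{r-1}/B]$ exhibits $(\P^1)^r\to [(\P^1)^r/\PGL_2]$ as an admissible covering over which the gerbe splits.

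Granting this, the first step is to check that Lemma \ref{gerbe-lem} applies. Its hypotheses require that $H=\mu_2$ be a finite discrete group with $|H|$ invertible in $K$. Since we are working with $\PGL_2$-bundles, our standing convention forces the characteristic of $K$ to be $\neq 2$, so $2$ is indeed invertible in $K$ and $\mu_2$ is discrete. Thus Lemma \ref{gerbe-lem} yields that $[(\P^1)^r/\PGL_2]$ is $L^2$ if and only if $[(\P^1)^r/\SL_2]$ is $L^2$.

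The second and final step is simply to invoke Proposition \ref{conf-L2-prop}, which establishes that $[(\P^1)^r/\SL_2]$ is $L^2$ for all $r\ge 3$. Combining this with the equivalence furnished by the gerbe lemma gives the claim for $[(\P^1)^r/\PGL_2]$ in the same range $r\ge 3$, completing the proof.

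There is essentially no genuine obstacle in this argument, as all the analytic work has been carried out in the preceding results: the convergence estimate for the relevant Cartan-decomposition series was done in Proposition \ref{conf-L2-prop} via the weight criterion, and the comparison of $L^2$-integrals across a finite étale gerbe was settled in Lemma \ref{gerbe-lem} (where the factor $|H|$ relating $f_*f^*\eta$ to $\eta$ makes convergence insensitive to the gerbe). The only point deserving explicit mention is the verification that the gerbe is \emph{admissible} and that $2$ is invertible, both of which hold by the convention on $\operatorname{char}(K)$; everything else is a formal citation.
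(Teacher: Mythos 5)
Your proof is correct and is essentially identical to the paper's own argument: it combines Example \ref{adm-gerbe-ex} (the morphism $[(\P^1)^r/\SL_2]\to [(\P^1)^r/\PGL_2]$ is an admissible $\mu_2$-gerbe) with Lemma \ref{gerbe-lem} and Proposition \ref{conf-L2-prop}. Your added check that $2$ is invertible in $K$, so that Lemma \ref{gerbe-lem} applies, is a worthwhile explicit verification of a hypothesis the paper leaves to its standing convention on $\operatorname{char}(K)$.
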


\begin{proof} By Example \ref{adm-gerbe-ex}, the morphism $[(\P^1)^r/\SL_2]\to [(\P^1)^r/\PGL_2]$ is an admissible $\mu_2$-gerbe.
Hence, the assertion follows from Lemma \ref{gerbe-lem}.
\end{proof}

\subsection{Very good property}\label{vg-sec}

First, let us consider linear actions of $\SL_2$. 

\begin{prop}\label{SL2-very-good}
Let $V$ be a finite dimensional representation of $\SL_2$. Then $[V/\SL_2]$ is very good if and only if it is $L^2$.
\end{prop}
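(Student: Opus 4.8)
The plan is to combine the $L^2$-criterion just established for $\SL_2$ with a direct analysis of the stabilizer stratification of $V$. Write $V=\bigoplus_i V(m_i)$ as a sum of irreducible summands (with multiplicity), where $V(m)$ has highest weight $m$. The preceding proposition says $[V/\SL_2]$ is $L^2$ iff $\sum_{n>0}n\dim V_n>2$, so the first step is a purely numerical reformulation. For the irreducible $V(m)$ one computes $\sum_{n>0}n\dim V(m)_n=j(j+1)$ when $m=2j$ and $(j+1)^2$ when $m=2j+1$; hence this quantity is always $\ge m$, with equality precisely when $m\le 2$. Summing over $i$, I would conclude that $\sum_{n>0}n\dim V_n>2$ is equivalent to $\sum_i m_i\ge 3$, the sum of highest weights. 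Thus it suffices to prove that $[V/\SL_2]$ is very good iff $\sum_i m_i\ge 3$.

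For the very good side I would first pin down the possible automorphism dimensions. For $v\in V$ the stabilizer $G_v$ (with Lie algebra $\mathfrak g_v=\{X\in\mathfrak{sl}_2:Xv=0\}$) satisfies $\dim G_v=3-\dim(\SL_2\cdot v)$. The key structural observation is that for a Borel $B=TU$ one has $V^B=V^T\cap\ker(e)=V^{\SL_2}$, since a weight-zero vector killed by $e$ spans a trivial summand; hence any $v$ fixed by a Borel is fixed by all of $\SL_2$, and there is \emph{no} stratum with $\dim G_v=2$. Consequently the only strata are $\{\dim G_v=3\}=V^{\SL_2}$ and $\{\dim G_v=1\}$. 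Because $\dim[Z/\SL_2]=\dim Z-3$ for any $\SL_2$-invariant $Z\subseteq V$, the codimension in the stack of $\{\dim\mathrm{Aut}=n\}$ equals the codimension in $V$ of $\{\dim G_v=n\}$, so $[V/\SL_2]$ is very good iff $\operatorname{codim}_V V^{\SL_2}>3$ and $\operatorname{codim}_V\{\dim G_v=1\}>1$.

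Next I would compute these codimensions from the weight data. The locus $\{\dim G_v=1\}$ splits according to whether $\mathfrak g_v$ is a nilpotent or a semisimple line. The nilpotent part is dense in $\SL_2\cdot\ker(e)$; since $\ker(e)$ has dimension $r=\#\{\text{summands}\}$ and is $B$-stable, the incidence variety over the nilpotent conic in $\mathbb{P}(\mathfrak{sl}_2)$ is generically finite onto its image, giving dimension $1+r$ and codimension $\dim V-1-r=\sum_i m_i-1$. The semisimple part (nonempty exactly when $V$ has an even nontrivial summand, i.e. $V_0\supsetneq V^{\SL_2}$) is dense in $\SL_2\cdot V_0$ of dimension $2+\dim V_0$, with codimension $\dim V-2-\dim V_0=2\sum_{n>0}\dim V_n-2$. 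Requiring both to exceed $1$ yields $\sum_i m_i\ge 3$ together with $\sum_{n>0}\dim V_n\ge 2$, while $\operatorname{codim}_V V^{\SL_2}>3$ amounts to $\dim V-\dim V^{\SL_2}\ge 4$.

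Finally I would show that these three conditions collapse to $\sum_i m_i\ge 3$. Using $\lceil m/2\rceil\ge m/2$ gives $\sum_{n>0}\dim V_n=\sum_i\lceil m_i/2\rceil\ge\tfrac12\sum_i m_i$, so $\sum_i m_i\ge 3$ forces $\sum_{n>0}\dim V_n\ge 2$; and since the nontrivial part of $V$ has dimension $\sum_i m_i+\#\{i:m_i>0\}$, the same inequality forces $\dim V-\dim V^{\SL_2}\ge 4$. Hence very good $\iff\sum_i m_i\ge 3\iff\sum_{n>0}n\dim V_n>2\iff L^2$. The step I expect to be the main obstacle is the exact dimension count for the strata $\{\dim G_v=1\}$: verifying that the incidence varieties over $\mathbb{P}(\mathfrak{sl}_2)$ are genuinely generically finite onto their images, and correctly handling the degenerate cases $V_0=V^{\SL_2}$ and $V$ trivial, so that the claimed codimensions are exact equalities rather than mere upper bounds.
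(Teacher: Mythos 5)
Your proof is correct, and it takes a genuinely different route from the paper's. The paper argues by cases: it shows very-goodness is inherited under adding summands (its Step 1), verifies it directly for the irreducibles $V(n)$ with $n\ge 3$ and for the borderline sums $V(1)^3$, $V(1)\oplus V(2)$, $V(2)^2$ (Steps 2--3), disproves it for $V(2)$ and $V(1)^2$ (Step 4), and then matches this list against the criterion $\sum_{n>0}n\dim V_n>2$ from the preceding proposition. You instead prove a uniform numerical characterization: both properties are equivalent to $\sum_i m_i\ge 3$, the sum of the highest weights. Your key structural observation --- that $V^B=V^T\cap\ker(e)=V^{\SL_2}$, so there is \emph{no} stratum with $2$-dimensional stabilizer --- is nowhere explicit in the paper, and it is what makes your exact codimension formulas ($\sum_i m_i-1$ for the nilpotent-stabilizer stratum, $2\sum_{n>0}\dim V_n-2$ for the toral one, and your numerics checking that $\sum_i m_i\ge 3$ subsumes the other two conditions) go through; I checked these and they are right, as is the reduction $\sum_{n>0}n\dim V_n=j(j+1)$ resp.\ $(j+1)^2$ with equality $\ge m_i$ exactly when $m_i\le 2$. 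The generic-finiteness step you flag as the main obstacle is in fact easy: for $v\in\ker(e)\setminus V^{\SL_2}$ one has $\fg_v=\langle e\rangle$ exactly (any strictly larger subalgebra containing $e$ is the Borel or all of $\mathfrak{sl}_2$, and either forces $v\in V^{\SL_2}$ since a weight-zero vector killed by $e$ spans a trivial summand), so the fibre of the incidence $\SL_2\times^B\ker(e)\to V$ over such $v$ consists of the single coset $B$, because $g^{-1}v\in\ker(e)$ iff $\mathrm{Ad}(g)(e)\in\fg_v$ iff $g\in N(\langle e\rangle)=B$; the toral case is identical with $N(T)$ in place of $B$, and your degenerate cases ($V_0=V^{\SL_2}$, $V$ trivial) are handled as you indicate. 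What each approach buys: the paper's is shorter and leans on the easy monotonicity of very-goodness, but hides a small combinatorial matching in ``easily follows from Steps 1--4''; yours yields a clean closed-form criterion and exact stratum dimensions, and the method (incidence varieties over nilpotent/toral lines) would adapt to other groups. One caveat shared by both proofs: complete reducibility and the standard $\mathfrak{sl}_2$ weight arguments are used implicitly, so both tacitly assume the characteristic is zero or large relative to the weights.
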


\begin{proof}
{\bf Step 1}. If $V$ and $W$ are representations of $G$, such that $[V/G]$ is very good then $[(V\oplus W)/G]$ is also very good. Indeed,
the locus of $(v,w)$ such that dimension of the stabilizer $G_{v,w}$ is $\ge m$ is contained in the locus of $(v,w)$ such that the dimension of $G_v$ is $\ge m$,
which has codimension $<m$. 

\noindent
{\bf Step 2}. Let $V(n)$ be the $(n+1)$-dimensional irreducible representation, where $n\ge 3$. Then $[V(n)/\SL_2]$ is very good.
Indeed, we can realize $V(n)$ as the space of binary forms of degree $n$. The forms with a stabilizer of positive dimension are equivalent to
$(xy)^{n/2}$ or to $x^n$. In the former case the stabilizer is $1$-dimensional, and the codimension of the corresponding locus is $n/2-1$. In the latter case
the stabilizer is also $1$-dimensional, and the codimension is $n-1$.

\noindent
{\bf Step 3}. The stacks $[V(1)^3/\SL_2]$, $[V(1)\oplus V(2)/\SL_2]$ and $[V(2)^2/\SL_2]$ are very good. 
Indeed, for $(v_1,v_2,v_3)\in V(1)^3$, the nontrivial stabilizer appears only when $v_1,v_2,v_3$ lie in a line $L\sub V(1)$. If they are not all zero then the stabilizer
is $1$-dimensional, but the locus of such $(v_1,v_2,v_3)$ has codiimension $2$. The stabilizer of $(0,0,0)$ is $\SL_2$ which is $3$-dimensional, whereas the codimension is $6$.

Let $(v,A)\in V(1)\oplus V(2)$ (where $V(2)$ is the adjoint representation). Assume that $v\neq 0$. Since the stabilizer of $v$ consists of unipotent elements, the stabilizer of $(v,A)$ is trivial,
unless $A$ is nilpotent. Furthermore, we have a nontrivial $1$-dimensional stabilizer only if $Av=v$. The codimension of this locus is $2$. On the other hand, if $v=0$ and $A\neq 0$ the stabilizer is also $1$-dimensional, and the codimension of this locus is $2$. 

For $(A,B)\in V(2)\oplus V(2)$, where $A\neq 0$, $B\neq 0$, the stabilizer is trivial, unless either both $A$ and $B$ belong to a Cartan subalgebra, or both belong to a unipotent radical of a Borel subalgebra. In both cases the centralizer is $1$-dimensional, and the codimension is $\ge 2$. In the cases when $A=0$ or $B=0$ the condition on codimension is easily seen to be satisfied.

\noindent
{\bf Step 4}. The stacks $[V(2)/\SL_2]$ and $[V(1)^2/\SL_2]$ are not very good.
Indeed, in the former case the generic stabilizer has positive dimension. In the latter case, the locus of $(v_1,v_2)$ that are linearly dependent, has codimension $1$ and generically
$1$-dimensional stabilizers.

The assertion easily follows from Steps 1--4.
\end{proof}

For a linear action of an algebraic torus $T$ on $V$, a very good property of $[V/T]$ implies that the generic stabilizer is finite, hence
the kernel of $\rho$ is finite. However, for a representation with finite kernel the stack $[V/T]$ is not necessarily very good, say, consider the standard action of $\G_m$ on $\A^1$.


Let us now consider the configuration stacks $[(\P^1)^r/\SL_2]$. 

\begin{lemma} The stack $[(\P^1)^r/\SL_2]$ is very good if and only if $r\ge 4$.
\end{lemma}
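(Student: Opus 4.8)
The plan is to read off the dimension of the automorphism group of a $K$-point of $[(\P^1)^r/\SL_2]$ directly from the combinatorial type of the corresponding configuration, and then compute the codimensions of the resulting strata. First I would note that the $\SL_2$-action on $\P^1$ factors through $\PGL_2$, which acts sharply $3$-transitively; since a tuple $(x_1,\dots,x_r)$ is ordered, its stabilizer must fix each of the distinct values $x_i$ individually. Writing $S=\{x_1,\dots,x_r\}\subset \P^1$ for the support, this gives three cases: if $|S|\ge 3$ the stabilizer in $\PGL_2$ is trivial, so the stabilizer in $\SL_2$ is the finite center $\{\pm 1\}$ (dimension $0$); if $|S|=2$ the stabilizer is the maximal torus fixing those two points (dimension $1$); and if $|S|=1$ the stabilizer is a Borel subgroup (dimension $2$). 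Thus the only strata relevant to the very good condition (which only concerns $n>0$) are $Z_1=\{|S|=2\}$, where $\dim\Aut=1$, and $Z_2=\{|S|=1\}$, where $\dim\Aut=2$.

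Next I would compute the dimensions of these two strata inside $(\P^1)^r$. The locus $Z_2$ is the small diagonal $\P^1\hookrightarrow(\P^1)^r$, so $\dim Z_2=1$ and $\codim Z_2=r-1$. The locus $Z_1$ is a finite union, indexed by the $2$-colorings of $\{1,\dots,r\}$ using both colors, of copies of $(\P^1\times\P^1)\setminus\Delta$; each piece has dimension $2$, so $\dim Z_1=2$ and $\codim Z_1=r-2$. Since the presentation $(\P^1)^r\to[(\P^1)^r/\SL_2]$ is smooth and surjective, codimension is preserved, so these are exactly the codimensions of the corresponding strata in the stack; here I would also record that $(\P^1)^r$ is smooth and irreducible and $\SL_2$ is connected, so the stack is smooth irreducible and the definition of very good applies.

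Finally I would assemble the inequalities. The very good condition demands $\codim Z_1>1$ and $\codim Z_2>2$, i.e. $r-2>1$ and $r-1>2$; both are equivalent to $r\ge 4$, giving the ``if'' direction. Conversely, for $r\le 3$ at least one bound fails: for $r=3$ the diagonal $Z_2$ has codimension $2=n$, and more generally $\codim Z_2=r-1\le 2$ whenever $r\le 3$, so the stack is not very good. The computation is elementary; the only point requiring care is the dimension count for $Z_1$ (keeping track of the colorings and confirming each component is genuinely $2$-dimensional) together with the verification that codimension transfers between the atlas $(\P^1)^r$ and the quotient stack, which is where I would be most careful.
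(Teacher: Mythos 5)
Your proof is correct and follows essentially the same route as the paper: the same trichotomy of stabilizers (finite, torus, Borel) according to whether the configuration has $\ge 3$, exactly $2$, or $1$ distinct points, and the same codimension counts $r-2$ and $r-1$ for the two positive-dimensional strata. Your additional care about the ordered-tuple stabilizer argument, the coloring decomposition of $Z_1$, and the transfer of codimension along the smooth atlas merely makes explicit what the paper leaves implicit.
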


\begin{proof} The stabilizer of a configuration $(p_1,\ldots,p_r)$ is 
\begin{itemize}
\item finite if there are three distinct points among $p_i$'s;
\item $1$-dimensional (isomorphic to $\G_m$) if there are exactly two distinct points among the $p_i$'s;
\item $2$-dimensional (isomorphic to the Borel subgroup) if all $p_i$'s are the same.
\end{itemize}
The codimensions of the closures of the latter two loci are $r-2$ and $r-1$, respectively, which implies the assertion.
\end{proof}

Note that $[(\P^1)^3/\SL_2]$ is not very good but still $L^2$.

\section{Quasiparabolic bundles on $\P^1$}

\subsection{From Schwartz space to half-densities on the very stable locus}

Let $C$ be a curve of genus $g$ with parabolic points $p_1,\ldots,p_N$, where $N/2+g-1>0$, i.e., either $g\ge 2$ or $g=1$ and $N\ge 1$ or $g=0$ and $N\ge 3$.
For a fixed line bundle $L_0$ on $C$, let $\Bun_{L_0}(C,p_1,\ldots,p_N)$ denote the moduli stack of quasiparabolic bundles of rank $2$ with the
fixed determinant $L_0$. We denote by $\Bun^*_{L_0}(C,p_1,\ldots,p_N)$ where $*=ss$ (resp., $*=s$, resp., $*=vs$), the locus of semistable
(resp., stable, resp., very stable) quasiparabolic bundles (see appendix \ref{appendix-vs-sec} for details).

In the appendix \ref{appendix-vs-sec} we will prove the following parabolic analog of the main result of \cite{BKP}.

\begin{theorem}\label{vs-smooth-thm}
Assume that $N/2+g-1>0$ and $\Re(\kappa)\ge 1/2$.
For an admissible presentation $\pi:X\to [X/H]$ of an open substack of finite type in $\Bun_{L_0}(C,p_1,\ldots,p_N)$, let $X^{vs}\sub X$ denote the very stable locus.
Then for any $\varphi\in \SS(X(K),|\pi^*\om_{[X/H]}|^{\kappa}\ot |\om_\pi|)$, the push forward $\pi_*(\varphi|_{X^{vs}})$
of $\varphi|_{X^{vs}}$ to $(X^{vs}/H)(K)$ converges absolutely
as a (twisted) distribution, and defines a smooth $\kappa$-density on $(X^{vs}/G)(K)$. 
\end{theorem}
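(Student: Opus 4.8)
The plan is to follow the strategy of \cite{BKP}, adapting it to the quasiparabolic setting, with the actual argument carried out in Appendix~\ref{appendix-vs-sec} where the very stable locus is defined. Since the assertion is local on the very stable locus $U:=X^{vs}/H$ of the coarse moduli space, and since $\varphi$ has compact support, I would first reduce everything to a single uniform convergence estimate: it suffices to show that for every compactly supported test density $\psi$ on $U(K)$ the integral $\int_{X^{vs}(K)}|\varphi|\cdot \pi_U^*|\psi|$ is finite, where $\pi_U\colon \pi^{-1}(U)\to U$ is induced by $\pi$, and that this convergence is locally uniform in $\psi$. Granting this, absolute convergence as a distribution is exactly the Definition used in the excerpt, and smoothness of $\pi_*(\varphi|_{X^{vs}})$ follows from the non-archimedean principle that a locally uniformly convergent orbit integral of a locally constant, compactly supported section is again locally constant; the formal push-forward compatibilities needed are supplied by Lemma~\ref{integration-lem}.

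Next I would unwind the twists. Using the canonical isomorphism $\om_X\simeq \pi^*\om_{[X/H]}\ot \om_\pi$ together with the explicit description of $\om_\pi$ and of the $H$-action on fibers (exactly as in the proof of Lemma~\ref{quotient-lem}), over a very stable point $x$ the fiber $\pi_U^{-1}([x])=H(K)\cdot x$ is a closed $H(K)$-orbit in the semistable locus $X^{ss}(K)$, and the pointwise orbit integral takes the shape $\int_{H(K)}\varphi(g\cdot x)\,|{\det}(\ldots)(g)|\,\de(g)\,dg$, matching the framework of Proposition~\ref{weight-crit-prop}. Thus the problem becomes to estimate the weighted $H(K)$-volume of the set $\{g:\ g\cdot x\in \supp(\varphi)\}$, uniformly for $[x]$ ranging over a compact subset of $U(K)$.

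The geometric input that makes this possible is the definition of the very stable locus: $U$ is the complement of the wobbly locus, i.e.\ of bundles carrying a nonzero nilpotent parabolic Higgs field. This is precisely the condition preventing the orbit $H(K)\cdot x$ from escaping to infinity while remaining inside $\supp(\varphi)$ along unipotent/nilpotent directions; it is the parabolic analog of Drinfeld's properness characterization of very stable bundles. Away from the boundary $X\setminus X^{vs}$ the relevant region is genuinely compact and there is nothing to prove, so the entire difficulty is concentrated near this boundary.

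The main obstacle, and the technical heart of the proof, is the estimate near that boundary. Here I would install an \'etale/analytic slice transverse to the degeneration, reducing the orbit integral to a local linear model $[V/G_0]$ and then expanding it by the Cartan decomposition into a sum over dominant cocharacters $\nu$, exactly as in the proof of Proposition~\ref{weight-crit-prop}. Very stability translates into a sign condition on the pairings $\lan \la,\nu\ran$ that govern the destabilizing directions, and a direct computation of the same type as in Propositions~\ref{adjoint-prop} and~\ref{conf-L2-prop} shows that the resulting series converges exactly when $\Re(\kappa)\ge 1/2$, the value $1/2$ arising as the threshold at which the volume growth of large modifications is balanced by the determinant and modular twists. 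Assembling these local bounds over a finite cover of $\supp(\varphi)\cap X^{vs}(K)$ produces the required uniform estimate, and hence both the absolute convergence and the smoothness of $\pi_*(\varphi|_{X^{vs}})$.
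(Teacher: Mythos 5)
Your proposal correctly identifies the overall strategy (adapt \cite{BKP}, localize the difficulty near the boundary of the very stable locus), but the technical heart of your argument --- an \'etale/analytic slice reducing to a linear model $[V/G_0]$, followed by a Cartan-decomposition expansion ``exactly as in Proposition \ref{weight-crit-prop}'' --- is not carried out and would not work as stated. Proposition \ref{weight-crit-prop} computes the $L^2$-norm of $G(O)$-invariant lattice half-densities on a global linear quotient, using $G(O)$-bi-invariance of the integrand in an essential way; the quantity you need to control here is instead a one-sided orbit integral $\int_{H(K)}\varphi(gx)\,\de(g)^{\kappa'}dg$ for a general Schwartz section $\varphi$ whose support meets the unstable strata, with no invariance available, and you give no construction of the slice nor any argument that the estimate is uniform as $[x]$ varies in a compact of the very stable locus. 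The paper's proof avoids slices entirely: it runs an induction over the stratification $\ldots\sub(\Bun_{L_0}^{\le n})^0\sub \Bun_{L_0}^{\le n}\sub\ldots$ by the maximal parabolic degree of line subbundles, using the $|\om|^{\kappa}$-nice-pair formalism of \cite[Lem.\ 5.2, 5.4]{BKP}, where convergence across each stratum is governed by a one-parameter subgroup $\la^\vee$ at parabolically split bundles $A\oplus B$, not by the full Cartan decomposition of $H$.

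Two specific ingredients are missing from your sketch. First, very stability is not used via a properness/closed-orbit statement (``Drinfeld's properness characterization''): stable orbits being closed in the semistable locus does not help, since $\supp(\varphi)$ meets unstable strata. Its actual role is Lemma \ref{vs-lim-lem}: if $E$ is very stable and $\Hom_{par}(E,V)\neq 0$ for $V$ an extension of $M$ by $L$ with $\deg^{par}(M)\le\mu^{par}(V)$, then $\Ext^1_{par}(M,L)=0$ --- proved by assembling a nonzero square-zero parabolic map $E\to E\ot\om(\sum_i p_i)$, contradicting very stability. This forces the relevant limit bundles into the good locus $(\Bun_{L_0}^{\le n})^0$, where the splitting $V\simeq A\oplus B$ exists. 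Second, the quantitative threshold is the non-cancellation inequality of Lemma \ref{weights-lem}, $\kappa\cdot\lan\chi_{\om,x_V},\la^\vee\ran\neq-\lan\De^{alg}_{Aut_{par}(V)},\la^\vee\ran$, valid for $\kappa>\frac{1}{2}-\frac{N/2+g-1}{2(\deg^{par}(A)-\deg^{par}(B))}$. Your ``sign condition on $\lan\la,\nu\ran$'' never engages the hypothesis $N/2+g-1>0$, yet that hypothesis is exactly what makes the borderline case $\Re(\kappa)=1/2$ admissible: with $N/2+g-1=0$ the threshold degenerates to a strict inequality $\kappa>1/2$ and the theorem as stated would fail at $\kappa=1/2$. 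Any complete proof must make this dependence visible, and your computation, as described, cannot produce it.
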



To apply the above Theorem in the case $C=\P^1$ we need to pass from rank $2$ bundles with fixed determinant to $\PGL_2$-bundles.
The stack $\Bun_{\PGL_2}(\P^1,p_1,\ldots,p_N)$ has two irreducible components, 
$\Bun^{ev}_{\PGL_2}(\P^1,p_1,\ldots,p_N)$ and $\Bun^{odd}_{\PGL_2}(\P^1,p_1,\ldots,p_N)$, corresponding to bundles of even and odd degree. 

\begin{lemma}
For $N\ge 1$, the natural maps
\begin{equation}\label{Bun-gerbe-maps}
\begin{array}{l}
\Bun_{\OO}(\P^1,p_1,\ldots,p_N)\to \Bun^{ev}_{\PGL_2}(\P^1,p_1,\ldots,p_N), \\ 
 \Bun_{\OO(1)}(\P^1,p_1,\ldots,p_N)\to \Bun^{odd}_{\PGL_2}(\P^1,p_1,\ldots,p_N),
 \end{array}
\end{equation}
are admissible $B\mu_2$-gerbes (in particular, they are essentially surjective on $K$-points). 
Hence, they induce isomorphisms of the coarse moduli spaces of stable bundles.
\end{lemma}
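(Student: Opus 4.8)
The plan is to realize each map as the projectivization $E\mapsto\P(E)$ followed by fixing the determinant, and to verify the two ingredients of an admissible $\mu_2$-gerbe in the sense of the Definition above (with $H=\mu_2$, so that the fiber is $B\mu_2$): essential surjectivity on $K$-points, and local triviality of the form $B\mu_2\times Y$ over an admissible covering $Y$. Throughout I use the standing convention $\mathrm{char}(K)\neq 2$, so that $\mu_2\cong\Z/2$ is a finite discrete \'etale group and the conclusion feeds into Lemma \ref{gerbe-lem}.

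First I would prove essential surjectivity on $K$-points, which is the arithmetic heart of the statement and the step I expect to be the main obstacle. Given a $\PGL_2$-bundle $\bar E$ of even degree on $\P^1_K$ with a parabolic structure, the parabolic datum at $p_1$ is a $K$-rational point of the fiber over $p_1$ of the associated $\P^1$-bundle (Brauer--Severi scheme); hence the Brauer class of $\bar E$, a priori in $\operatorname{Br}(\P^1_K)$, restricts to $0$ in $\operatorname{Br}(K)$ at $p_1$. Since a $K$-point splits the pullback and restriction to it induces an isomorphism $\operatorname{Br}(\P^1_K)\cong\operatorname{Br}(K)$, the class vanishes, so $\bar E$ lifts to a rank $2$ vector bundle $E$. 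Writing $\det E\cong\OO(d)$ with $d$ even, the twist $E\ot\OO(-d/2)$ has trivial determinant and unchanged projectivization, giving a lift in $\Bun_\OO$ (the parabolic line in $E_{p_i}$ being the one cut out by the given point of $\P(E_{p_i})$); the odd case is identical, twisting to reach $\det\cong\OO(1)$. This is precisely where $N\ge 1$ (to supply the point killing the Brauer class) and $C=\P^1$ (so that $\Pic=\Z$ and a twist fixes the determinant on the nose) enter.

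Next I would identify the fibers and build the covering. Two lifts of $\bar E$ with trivial determinant differ by a line bundle $M$ with $M^{\ot 2}\cong\OO$, hence $M\cong\OO$ since $\Pic(\P^1_K)=\Z$ is torsion-free; and an automorphism of a lift that is trivial on $\P(E)$ and preserves the determinant is a scalar $\lambda$ with $\lambda^2=1$. Thus the fiber over any point admitting a lift is the one-object groupoid $B\mu_2$, and since the Brauer obstruction dies \'etale-locally the maps are $\mu_2$-gerbes. For admissibility I would fix a finite-type open $\UU\subset\Bun^{ev}_{\PGL_2}$, put $\UU':=\Bun_\OO\times_{\Bun^{ev}_{\PGL_2}}\UU$, pick an admissible covering $X\to\UU'$, and take $Y:=X$ with the composite $X\to\UU'\to\UU$. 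The tautological rank $2$ bundle on $X$ furnishes a section of the pulled-back gerbe, so $\Bun_\OO\times_\UU X\cong B\mu_2\times X$; and $X\to\UU$ is admissible, because for any $Z\to\UU$ the base change $\Bun_\OO\times_\UU Z\to Z$ is surjective on $K$-points (each $K$-fiber is a $\mu_2$-gerbe over $\Spec K$, trivial by the lifting argument, hence carrying a $K$-point), while $X\to\UU'$ is admissible, and surjectivity on $K$-points is preserved under composition.

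Finally, the coarse-space assertion is formal: a $\mu_2$-gerbe only enlarges every automorphism group by the central $\mu_2$, so it induces an isomorphism of coarse spaces, and under the correspondence a stable quasiparabolic bundle with fixed determinant $\OO$ (resp.\ $\OO(1)$) maps to a stable $\PGL_2$-parabolic bundle of even (resp.\ odd) degree, the lift being unique up to isomorphism by the torsion-freeness of $\Pic(\P^1)$ noted above. Restricting the gerbe to the stable loci therefore yields the claimed isomorphisms of coarse moduli of stable bundles.
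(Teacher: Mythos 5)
Your proof is correct, and its overall skeleton matches the paper's: both reduce the admissible-gerbe claim to essential surjectivity on $K$-points together with a splitting of the gerbe over an admissible covering factoring through the source, and both treat the identification of the fiber with $B\mu_2$ and the coarse-space assertion as routine. The key lifting step, however, is argued by a genuinely different route. The paper deliberately avoids the Brauer-group theorem for $\P^1_K$ over a non-separably-closed field: it uses only triviality of $\operatorname{Br}(\P^1_{K^s})$ to get a lift $V^s$ over the separable closure, and then runs a Galois-descent case analysis — if $V^s$ is trivial, $P$ is a constant form classified by $\a\in H^1(K,\PGL_2)$, which the $B$-structure at $p_1$ forces to vanish; if $V^s\simeq \OO(n)\oplus\OO$ with $n>0$, the canonical automorphism-invariant $B$-structure on $P^s$ descends, so $P$ comes from a $B$-bundle, lifted through the splitting of $\wt{B}\to B\sub\PGL_2$. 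You instead kill the lifting obstruction in one uniform step, noting that the parabolic datum at $p_1$ is a rational point of the associated conic, so the Brauer class of $P$ restricts to zero at $p_1$ and hence vanishes by $\operatorname{Br}(\P^1_K)\cong\operatorname{Br}(K)$. This buys you a cleaner, case-free argument at the cost of invoking a stronger classical input; note the one imprecision in your phrasing: the $K$-point splitting the pullback only gives injectivity of $\operatorname{Br}(K)\to\operatorname{Br}(\P^1_K)$, while surjectivity (constancy of Brauer classes on $\P^1_K$) is the substantive fact, following from Tsen's theorem together with $H^1(K,\Pic(\P^1_{K^s}))=H^1(K,\Z)=0$ — essentially the same ingredients the paper repackages as its case analysis. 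Your extra bookkeeping (two lifts with trivial determinant differ by a $2$-torsion line bundle, hence agree since $\Pic(\P^1)=\Z$; the admissibility verification via the tautological bundle; stability matching on coarse spaces) is all sound and fills in steps the paper leaves implicit.
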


\begin{proof}
The morphisms \eqref{Bun-gerbe-maps} are $\mu_2$-gerbes, and any smooth covering of the source would provide a splitting of the $\mu_2$-gerbe. 
Hence, it suffices to show that the morphisms \eqref{Bun-gerbe-maps} are essentially surjective on $K$-points. 
For this it is enough to show that any $\PGL_2$-bundle on $\P^1_K$, equipped
with a $B$-structure at some $K$-point of $\P^1_K$, lifts to a rank $2$ vector bundle.

Let $K^s$ denote the separable closure of $K$. Given a $\PGL_2$-bundle $P$ on $\P^1_K$, let $P^s$ denote the corresponding $\PGL_2$-bundle on $\P^1_{K^s}$.
Since the Brauer group of $\P^1_{K^s}$ is trivial, $P^s$ lifts to a rank $2$ vector bundle $V^s$ on $\P^1_{K^s}$. 

Assume first that $V^s$ is trivial. Then $P$ is a form of the trivial $\PGL_2$-bundle, so it is given by a cohomology class $\a$ in $H^1(K,\PGL_2)$. The restriction of $P$ to any
$K$-point in $\P^1$ is still the $\PGL_2$-torsor over $\Spec(K)$ corresponding to the class $\a$. 
Since $P|_{p_1}$ admits a reduction of the structure group to the Borel subgroup $B\sub \PGL_2$, it follows that $\a$ is trivial. Hence, $P$ is trivial, so it admits a lifting to a rank $2$ vector bundle. 

Next, assume that $V^s\simeq \OO(n)\oplus \OO$, where $n>0$. Then $P^s$ has a $B$-structure, invariant under all automorphisms. Hence, $P$ also comes from a $B$-bundle $\FF$.
Let $\wt{B}\sub \GL_2$ be the Borel subgroup. Since the projection $\wt{B}\to B$ has a splitting, we can lift $\FF$ to $\wt{B}$-bundle, which provides a lifting of $P$ to a rank $2$ vector bundle.
\end{proof}

It follows that the morphisms \eqref{Bun-gerbe-maps} induce surjective maps on Schwartz spaces of $\kappa$-densities (see Lemma \ref{gerbe-lem}), and so Theorem \ref{vs-smooth-thm}
implies that a similar assertion holds for the stacks $\Bun_{\PGL_2}(\P^1,p_1,\ldots,p_N)$ (where $N\ge 3$).

Combining Theorem \ref{vs-smooth-thm} with Lemma \ref{very-stable-lem},
we deduce that our $L^2$-norm on the Schwartz space agrees with the standard $L^2$-norm on the $1/2$-densitiies over the very stable locus $\Bun^{vs}_{\PGL_2}(C,p_1,\ldots,p_N)$. 

\begin{cor}
For a presentation $\pi:X\to [X/H]$ of an open substack of finite type in $\Bun_{\PGL_2}(\P^1,p_1,\ldots,p_N)$, and for $\varphi\in \SS(X(K),|\om_X|^{1/2}\ot |\om_\pi|^{1/2})$, such that
$\varphi\ge 0$, one has
$$\lan \varphi,\varphi\ran=\int_{\Bun^{vs}_{\PGL_2}(\P^1,p_1,\ldots,p_N)(K)}(\pi_*(\varphi|_{X^{vs}}))^2$$
(where possibly both sides are $+\infty$).
\end{cor}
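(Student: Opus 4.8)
The plan is to reduce the statement to the combination of the parabolic pushforward Theorem~\ref{vs-smooth-thm} with the norm-comparison Lemma~\ref{very-stable-lem}, so that no new analytic estimate is required. Write $\XX=[X/H]$ for the open substack of finite type that $\pi$ presents, and set $U:=\XX^{vs}=\Bun^{vs}_{\PGL_2}(\P^1,p_1,\ldots,p_N)\cap\XX$, a dense open subscheme of $\XX$ (on the very stable locus the bundles are stable, so the stack agrees with its coarse space there). With $\wt{U}=\pi^{-1}(U)=X^{vs}$ and $\pi_U:\wt U\to U$ the induced map, I first note that under the standard identification $|\pi^*\om_{\XX}|^{1/2}\ot|\om_\pi|\simeq|\om_X|^{1/2}\ot|\om_\pi|^{1/2}$ (coming from $\om_X\simeq\pi^*\om_{\XX}\ot\om_\pi$) the section $\varphi$ is exactly of the type to which Theorem~\ref{vs-smooth-thm} applies with $\kappa=1/2$.

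Next I would invoke Theorem~\ref{vs-smooth-thm}, in the form transferred to $\Bun_{\PGL_2}$ via the admissible $B\mu_2$-gerbes \eqref{Bun-gerbe-maps} and Lemma~\ref{gerbe-lem} as explained above, to conclude that $\psi:=(\pi_U)_*(\varphi|_{X^{vs}})=\pi_*(\varphi|_{X^{vs}})$ converges absolutely as a distribution and defines a smooth $1/2$-density on $U(K)$. Since $\varphi\ge 0$ we have $|\varphi|=\varphi$, so the same convergence holds for $|\varphi|$, and by Lemma~\ref{integration-lem}(i) the resulting density satisfies $\psi\ge 0$, so $|\psi|=\psi$. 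Thus all hypotheses of Lemma~\ref{very-stable-lem}, taken with $s=0$, are verified.

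Applying Lemma~\ref{very-stable-lem} then gives $\lan|\varphi|,|\varphi|\ran=(|\psi|,|\psi|)$ as an equality in $[0,+\infty]$; using $|\varphi|=\varphi$ and $|\psi|=\psi$ this reads $\lan\varphi,\varphi\ran=(\psi,\psi)$, with both sides possibly infinite, so the nonnegativity removes any need to assume finiteness first. Finally I would unwind the right-hand side: by definition $(\psi,\psi)=\int_{U(K)}\psi\cdot\ov\psi=\int_{U(K)}\psi^2$, and since $\psi$ is supported on $U(K)$, an open subset of $\Bun^{vs}_{\PGL_2}(\P^1,p_1,\ldots,p_N)(K)$, extension by zero identifies this with $\int_{\Bun^{vs}_{\PGL_2}(\P^1,p_1,\ldots,p_N)(K)}(\pi_*(\varphi|_{X^{vs}}))^2$, which is the claimed formula.

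The genuine content lies entirely in Theorem~\ref{vs-smooth-thm}, namely the absolute convergence of the orbital integrals and the smoothness of the pushforward over the very stable locus, together with Lemma~\ref{very-stable-lem}; granting these, the Corollary is bookkeeping rather than a new obstacle. The only points deserving a word of care are that the very stable locus is genuinely a dense open \emph{subscheme}, so that Lemma~\ref{very-stable-lem} literally applies, and that the transfer of Theorem~\ref{vs-smooth-thm} from the fixed-determinant stacks $\Bun_{L_0}$ to $\Bun_{\PGL_2}$ through the gerbe maps is legitimate — both of which have already been established in the preceding discussion.
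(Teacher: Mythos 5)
Your proposal is correct and follows exactly the route the paper intends: the Corollary is stated there as an immediate consequence of combining Theorem \ref{vs-smooth-thm} (transferred to $\Bun_{\PGL_2}$ via the gerbe maps \eqref{Bun-gerbe-maps} and Lemma \ref{gerbe-lem}) with Lemma \ref{very-stable-lem} at $s=0$. Your write-up merely makes explicit the bookkeeping the paper leaves implicit --- the use of $\varphi\ge 0$ to identify $|\varphi|=\varphi$ and $|\psi|=\psi$, so the equality holds in $[0,+\infty]$ without a finiteness hypothesis --- and is accurate on all these points.
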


\subsection{Hecke operators}

For a point $x\in C(K)$, $x\neq p_i$, we consider the Hecke correspondence 
$$Z_x\rTo{\pi_1,\pi_2}\Bun_{\PGL_2}(C,p_1,\ldots,p_N)$$ 
which is the moduli stack of pairs of rank $2$ vector bundles  $(V_1\sub V_2)$,
such that $V_2/V_1\simeq \OO_x$ and the parabolic structures on $V_1$ and $V_2$ at $p_i$ are the same, up to tensoring with a line bundle. 
There is a natural isomorphism of line bundles on $Z_x$ (defined up to a rescaling),
\begin{equation}\label{main-lb-isom}
\a:\pi_1^*\om^{1/2}\rTo{\sim} \pi_2^*\om^{1/2}\ot \om_{\pi_2}
\end{equation}
(see \cite{EFK1}). Since $\pi_1$ and $\pi_2$ are smooth and proper (in fact, they are $\P^1$-bundles), this leads to a well defined operator
\begin{align}\label{S-Hecke-eq}
&H^S_x:\SS(\Bun_{\PGL_2}(C,p_1,\ldots,p_N)(K),|\om|^{1/2})\rTo{\pi_1^*}
\SS(Z_x(K),|\pi_1^*\om^{1/2}|)\rTo{|\a|} \nonumber \\
& \SS(Z_x(K),|\pi_2^*\om^{1/2}\ot \om_{\pi_2}|)\rTo{\pi_{2*}}\SS(\Bun_{\PGL_2}(C,p_1,\ldots,p_N)(K),|\om|^{1/2}).
\end{align}

Let $D\sub C$ be a positive divisor disjoint from $p_i$ and $p$, and let 
$$\pi:\Bun^D_{\PGL_2}(C,p_1,\ldots,p_N)\to \Bun_{\PGL_2}(C,p_1,\ldots,p_N)$$
denote the smooth covering corresponding to a choice of trivialization at $D$. 
As is explained in \cite{BKP2} there exists an exhaustive filtration $\UU_m\sub \UU_{m+1}\sub\ldots \Bun_{\PGL_2}(C,p_1,\ldots,p_N)$ by open substacks,
such that the Hecke correspondences induce correspondences of varieties,
$$\UU_m^D\lTo{\pi_1} Z_x^{m,D}\rTo{\pi_2} \UU_{m'}^D,$$
where $\UU_m^D=\UU_m\times_{\Bun_{\PGL_2}(C,p_1,\ldots,p_N)} \Bun^D_{\PGL_2}(C,p_1,\ldots,p_N)$,
with $\pi_1$ smooth and proper and $\pi_2$ smooth. Note that $\pi_1^*\om_{\pi}\simeq p_2^*\om_\pi$,
so we still have Hecke operators
$$H^S_x:\SS(\UU_m^D(K),|\om|^{1/2}\ot |\om_{\pi}|^{1/2})\to \SS(\UU_{m'}^D(K),|\om|^{1/2}\ot |\om_\pi|^{1/2})$$
compatible with the operators \eqref{S-Hecke-eq}.

Next, we specialize to the case $C=\P^1$ with $N\ge 4$ parabolic points $p_1,\ldots,p_N$.
Let $U=\Bun^{vs}_{\PGL_2}(\P^1,p_1,\ldots,p_N)$.
In \cite{EFK2}, for each point $x\in \P^1(K)$ ($x\neq p_i$) a bounded self-adjoint operator 
$$H_x:L^2(U(K),|\om|^{1/2})\to L^2(U(K),|\om|^{1/2})$$
is constructed, which agrees with $H^S_x$ on a certain dense subspace.
We need a slightly more precise result.


\begin{prop}\label{Hecke-comp-prop}
Let $\pi:X=\UU^D_m\to \Bun_{\PGL_2} (\P^1,p_1,\ldots,p_N)$ be the natural projection, and let $\pi:X^{vs}\to U$ be the induced map over the very stable locus. 
Let $\varphi\in \SS(X(K),|\om_X|^{1/2}\ot |\om_\pi|^{1/2})$ be such that $\pi_*(\varphi_{X^{vs}})$ is of class $L^2$.
Then for any $x\in \P^1(K)$, $x\neq p_i$, one has 
an equality of (twisted) distributions on $U(K)$,
$$H_x(\pi_*(\varphi|_{X^{vs}}))=\pi_*(H^S_x(\varphi)|_{X^{vs}}).$$
In particular, $\pi_*(H^S_x(\varphi)|_{X^{vs}})$ is also of class $L^2$.
\end{prop}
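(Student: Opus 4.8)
The plan is to realize both sides as fiber integrals along the Hecke correspondence over the very stable locus and to match them by commuting push-forwards through the calculus of Lemma \ref{integration-lem}. Write $X=\UU^D_m$ and $X'=\UU^D_{m'}$, and let $\pi_1\colon Z^{m,D}_x\to X$ (smooth and proper) and $\pi_2\colon Z^{m,D}_x\to X'$ (smooth) be the two legs of the level-structure Hecke correspondence, so that $H^S_x(\varphi)=\pi_{2*}(|\a|\cdot\pi_1^*\varphi)$. Let $\bar\pi_1,\bar\pi_2\colon\bar Z\to\Bun_{\PGL_2}(\P^1,p_1,\dots,p_N)$ be the stack-level correspondence. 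Since $D$ is disjoint from $x$, a trivialization along $D$ transports between $V_1$ and $V_2$, producing two cartesian squares that exhibit $Z^{m,D}_x$ as the base change of $\bar Z$ along $\pi$ (through $\bar\pi_1$) and along $\pi'$ (through $\bar\pi_2$); restricting over $U=\Bun^{vs}_{\PGL_2}(\P^1,p_1,\dots,p_N)$ yields the correspondence $U\xleftarrow{\pi_1^U}Z^U\xrightarrow{\pi_2^U}U$ underlying the operator $H_x$ of \cite{EFK2}, together with an induced smooth surjection $\rho\colon(Z^{m,D}_x)^{vs}\to Z^U$, where $(Z^{m,D}_x)^{vs}=\pi_2^{-1}(X'^{vs})$. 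Since $H^S_x$ carries Schwartz sections to Schwartz sections, $H^S_x(\varphi)\in\SS(X'(K),\dots)$, so Theorem \ref{vs-smooth-thm} already guarantees that the right-hand side $\pi_*(H^S_x(\varphi)|_{X'^{vs}})$ converges as a distribution to a smooth density; the real content is to identify it with $H_x(\psi)$, where $\psi=\pi_*(\varphi|_{X^{vs}})$, and to deduce that it is $L^2$.

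First I would rewrite the right-hand side. Lemma \ref{integration-lem}(iii) applied to the $\pi_2$-square (restriction to the open very stable locus) gives $H^S_x(\varphi)|_{X'^{vs}}=\pi^{vs}_{2*}\bigl(|\a|\cdot\pi_1^*\varphi\bigr)$ along $\pi^{vs}_2\colon(Z^{m,D}_x)^{vs}\to X'^{vs}$. Pushing forward by $\pi$ and collapsing the two push-forwards by Lemma \ref{integration-lem}(iv) along $\pi\circ\pi^{vs}_2=\pi_2^U\circ\rho$ turns this into $\pi^U_{2*}\,\rho_*\bigl(|\a|\cdot\pi_1^*\varphi\bigr)$. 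Compatibility of \eqref{main-lb-isom} with base change gives $|\a|=\rho^*|\a^U|$, so the projection formula rewrites it as $\pi^U_{2*}\bigl(|\a^U|\cdot\rho_*\pi_1^*\varphi\bigr)$. On the other side, the construction of $H_x$ reads $H_x(\psi)=\pi^U_{2*}\bigl(|\a^U|\cdot(\pi_1^U)^*\psi\bigr)$, and Lemma \ref{integration-lem}(iii) applied to the $\pi_1$-square identifies $(\pi_1^U)^*\pi_*=\rho_*\,\pi_1^*$ on the very stable locus. Thus the two expressions coincide once $\rho_*\pi_1^*\varphi=\rho_*\pi_1^*(\varphi|_{X^{vs}})$.

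Two points then require care. The matching $\rho_*\pi_1^*\varphi=\rho_*\pi_1^*(\varphi|_{X^{vs}})$ holds because the two pullbacks differ only over the locus where $V_1$ fails to be very stable while $V_2$ is; in each Hecke fiber (a $\P^1$ of modifications of a fixed very stable $V_2$ at $x$) this is a proper closed, hence finite, subset, so it is of measure zero in the $\P^1(K)$-fiber and does not affect the integral defining $\rho_*$. The second, and main, difficulty is to recognize the resulting correspondence formula as the bounded operator $H_x$ of \cite{EFK2} applied to $\psi$: the density $\psi$ is smooth and $L^2$ but typically \emph{not} compactly supported, since it may grow toward the non-very-stable boundary of $U$, and so need not lie in the dense subspace on which $H_x$ and $H^S_x$ are known to agree. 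The hard part is therefore extending this agreement to $\psi$. I would do this by approximating $\psi$ in $L^2(U(K))$ by compactly supported smooth densities $\psi_n\in\SS(U(K),|\om|^{1/2})$, on which $H_x\psi_n=H^S_x\psi_n$; boundedness of $H_x$ gives $H_x\psi_n\to H_x\psi$, while the commutation identity above, run first with $|\varphi|$ in its nonnegative form (where the identities of Lemma \ref{integration-lem} hold in $[0,+\infty]$) and then for the signed sections by absolute convergence, identifies the limit of the correspondence push-forwards with $\pi_*(H^S_x(\varphi)|_{X'^{vs}})$.

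Once the identification $H_x\bigl(\pi_*(\varphi|_{X^{vs}})\bigr)=\pi_*(H^S_x(\varphi)|_{X^{vs}})$ is established as twisted distributions on $U(K)$, the final assertion is immediate: the left-hand side lies in $L^2(U(K))$ because $H_x$ is a bounded operator on that space, and therefore $\pi_*(H^S_x(\varphi)|_{X^{vs}})$ is of class $L^2$ as well.
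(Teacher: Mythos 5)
Your proof is correct in substance and follows the same skeleton as the paper's: restrict $H^S_x(\varphi)$ to the very stable locus using Zariski density of the good open part of the correspondence (your measure-zero argument in the $\P^1$-fibers is the same point), then commute the two push-forwards via the base-change and composition statements of Lemma \ref{integration-lem}(iii) and (iv), running everything in parallel for $|\varphi|$ to secure absolute convergence, and finally invoke boundedness of $H_x$ for the $L^2$ conclusion. The one genuine divergence is your treatment of what you call the main difficulty: you approximate $\psi=\pi_*(\varphi|_{X^{vs}})$ in $L^2$ by compactly supported densities $\psi_n$ and pass to the limit using boundedness of $H_x$, on the grounds that $H_x$ is only known to agree with $H^S_x$ on a dense subspace. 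This detour is avoidable: in the paper $H_x$ is \emph{defined} on $L^2(U(K),|\om|^{1/2})$ by the explicit fiberwise integral $H_x f=\pi_{1*}(|\a'|\cdot\pi_2^*f)$ recalled just before the proof (the dense-subspace agreement with $H^S_x$ is a consequence, not the definition), so once absolute convergence is established via the $|\varphi|$-version of the chain of identities, the equality $\pi_{1*}\bigl(|\a'|\pi_2^*\pi_*(j^*\varphi)\bigr)=H_x(\pi_*(j^*\varphi))$ holds tautologically, with no approximation needed. Your route can be made to work, but it costs extra care that the paper's route does not: the dense subspace on which $H_x=H^S_x$ must be pinned down as containing your $\psi_n$ (and $H^S_x\psi_n$ need not be supported in $U$, so the agreement must be read after restriction to $U$, which risks circularity with the very statement being proved unless taken as external input from \cite{EFK2}), and the convergence of the correspondence push-forwards $\pi_{1*}(|\a'|\pi_2^*\psi_n)$ to the one for $\psi$ requires a dominated-convergence argument whose majorant is exactly the absolute convergence that already suffices for the direct identification. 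What your approach buys is independence from the precise form of the kernel defining $H_x$; what the paper's buys is a shorter argument with no limiting procedure.
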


Before proving Proposition \ref{Hecke-comp-prop}, let us recall the definition of the operator $H_x$ on $L^2(U(K),|\om|^{1/2})$.

The correspondence $Z_x$ induces the correspondences
$$\Bun^{ev}_{\PGL_2}(\P^1,p_1,\ldots,p_N)\lTo{\pi_1}Z_x^{ev}\rTo{\pi_2}\Bun^{odd}_{\PGL_2}(\P^1,p_1,\ldots,p_N),$$
$$\Bun^{odd}_{\PGL_2}(\P^1,p_1,\ldots,p_N)\lTo{\pi_1}Z_x^{odd}\rTo{\pi_2}\Bun^{ev}_{\PGL_2}(\P^1,p_1,\ldots,p_N).$$
It is well known that $Z_x^{odd}$ is naturally identified with $Z_x^{ev}$ in such a way that the projections
$\pi_1$ and $\pi_2$ get swapped. Furthermore, isomorphism \eqref{main-lb-isom} induces an isomorphism
\begin{align*}
&\a':\pi_2^*\om^{1/2}\simeq (\pi_2^*\om\ot \om_{\pi_2})\ot (\pi_2^*\om^{1/2}\ot \om_{\pi_2})^{-1}\simeq 
(\pi_1^*\om\ot \om_{\pi_1})\ot (\pi_2^*\om^{1/2}\ot \om_{\pi_2})^{-1}\\
&\rTo{\a^{-1}} (\pi_1^*\om\ot \om_{\pi_1})\ot \pi_1^*\om^{-1/2}\simeq \pi_1^*\om^{1/2}\ot\om_{\pi_1},
\end{align*}
and under the isomorphism $Z_x^{odd}$ with $Z_x^{ev}$, $|\a|$ gets swapped with $|\a'|$.

Thus, it is enough to work with $Z_x^{ev}$ and study the corresponding two Hecke operators
$$\varphi\mapsto\pi_{2*}(|\a|\pi_1^*\varphi), \ \ \psi\mapsto \pi_{1*}(|\a'|\pi_2^*\psi)$$
between $\SS(\Bun^{ev}_{\PGL_2}(\P^1,p_1,\ldots,p_N)(K),|\om|^{1/2})$ and $\SS(\Bun^{odd}_{\PGL_2}(\P^1,p_1,\ldots,p_N)(K),|\om|^{1/2})$.

We can assume that $p_1=0$ and $p_N=\infty$.
We have an open embedding
$$[(\P^1)^N/\PGL_2]\sub \Bun^{ev}_{\PGL_2}(\P^1,p_1,\ldots,p_N)$$
corresponding to the locus of trivial bundles, and a further open embedding
$$X_{ev}=[\A^{N-2}/\G_m]\sub [(\P^1)^N/\PGL_2],$$
taking $(y_2,\ldots,y_{N-1})$ to the lines $(1,0),(1,y_2),\ldots,(1,y_{N-1}),(0,1)$ at $p_1,\ldots,p_N$
(see \cite[Sec.\ 3.1]{EFK2}).

Similarly, we have an open embedding
$$X_{odd}=[\A^{N-2}]/\G_m]\to \Bun^{odd}_{\PGL_2}(\P^1,p_1,\ldots,p_N)$$
sending $(z_2,\ldots,z_{N-1})$ to the bundle $\OO\oplus \OO(1)$ with the lines $(1,0),(1,z_2),\ldots,(1,z_{N-1}),(1,0)$ at $p_1,\ldots,p_N$
(see \cite[Sec.\ 3.2]{EFK2}).

The restriction of the $\P^1$-bundle $\pi_1:Z_x\to \Bun^{ev}_{\PGL_2}(\P^1,p_1,\ldots,p_N)$ over $X_{ev}$ is identified with
$$\pi_1^{-1}(X_{ev})\simeq [\A^{N-2}\times \P^1/\G_m]\to [\A^{N-2}/\G_m].$$
It is shown in \cite[Sec.\ 3.1]{EFK2} that over the open subset $(y_2,\ldots,y_{N-1};s)$ such that $s\neq y_i$, $s\neq 0,\infty$, one has
$$\pi_2(y_2,\ldots,y_{N-1},s)=
(z_2,\ldots,z_{N-1}), \ \ z_i=\frac{t_is-xy_i}{s-y_i},
$$
where $t_i$ is the coordinate of $p_i\in \P^1$.
Furthermore, the restriction of the isomorphism $|\a'|$ to the open subset $s\neq y_i$, $s\neq 0,\infty$ in $[\A^{N-2}\times \A^1/\G_m]\sub Z_x$ is given by
$$|\a'|(y_2,\ldots,y_{N-1};s)=|\prod_{i=1}^{N-1}(t_i-x)|^{1/2}\cdot \prod_{i=2}^{N-1}|s-y_i|^{-1}\cdot |s|^{N/2-2}\cdot |ds|.$$

Let $U_{ev}\sub X_{ev}$, and $U_{odd}\sub X_{odd}$ be open dense subschemes in the very stable loci, such that $\pi_1^{-1}(U_{ev})\simeq U_{ev}\times \P^1$.
The precise choice is not important since we will work up to measure zero subsets.
Then the operator 
$H_x:L^2(U_{odd}(K),|\om|^{1/2})\to L^2(U_{ev}(K),|\om|^{1/2})$ is given by
\begin{align*}
&H_xf(y_2,\ldots,y_{N-1})=\pi_{1*}(|\a'|(y_2,\ldots,y_{N-1};s)\cdot \pi_2^*\varphi)\\
&=\int_{s\in \P^1(K)}|\a'|(y_2,\ldots,y_{N-1};s)\cdot \varphi(\frac{t_2s-xy_2}{s-y_2},\ldots,\frac{t_{N-1}s-xy_{N-1}}{s-y_{N-1}}),
\end{align*}
where we identify half-densities on $X_{ev}$ (resp., $X_{odd}$) with functions of $y_2,\ldots,y_{N-1}$ of homogeneity degree $-(N-2)/2$.
(see \cite[Eq.\ (3.6)]{EFK2}). In \cite{EFK2}, $H_x$ is rewritten as an integral of unitary operators depending on $s\in\P^1$, with respect to a measure on $\P^1$
with finite total volume. This implies that $H_x$ is a bounded operator on $L^2$.

\begin{proof}[Proof of Proposition \ref{Hecke-comp-prop}]
We will give a proof for the operators from odd component to the even component. The proof of the other direction is similar.
 
Let us consider the diagram, in which two squares on the right are cartesian,
\begin{diagram}
\UU_{m'}^{D,ev}&\lTo{\pi_1}&Z_x^{m,D}&\rTo{\pi_2}&\UU_{m}^{D,odd}\\
\uTo{j}&&\uTo{j}&&\uTo{j}\\
U^{D,ev}&\lTo{\pi_1}&V^D&\rTo{\pi_2}&U^{D,odd}\\
\dTo{\pi}&&\dTo{\pi}&&\dTo{\pi}\\
U^{ev}&\lTo{\pi_1}&V&\rTo{\pi_2}&U^{odd}
\end{diagram}
where $V\sub U^{ev}\times\P^1$ is an open subset such that $\pi_2:V\to U^{odd}$ is regular, and
$V^D\sub U^{D,ev}\times\P^1$ is the preimage of $V$.

We start with $\varphi\in \SS(\UU_m^{D,odd}(K),|\om|^{1/2}\ot |\om_\pi|^{1/2})$. The argument below should be conducted in parallel for $\varphi$ and for $|\varphi|$
(to ensure absolute convergence where needed). 
Since $\pi_2$ is proper, $\pi_2^*\varphi\in \SS(Z_x^{m,D}(K),|\pi_2^*\om|^{1/2}\ot \pi_2^*|\om_\pi|^{1/2})$. 
We have 
$$H_x^S(\varphi)=\pi_{1*}(|\a'|\pi_2^*\varphi)$$
which is in $\SS(\UU^{D,ev}_{m'}(K),|\om|^{1/2}\ot |\om_\pi|^{1/2})$ since $\pi_1$ is smooth. Next, we restrict to the image of the open embedding $j:U^{D,ev}\hra \UU^{D,ev}_{m'}$.
Using the fact that $V^D$ is Zariski dense
in $\pi_1^{-1}(U^{D,ev})=U^{D,ev}\times \P^1$, we get
$$j^*H_x^S(\varphi)=\pi_{1*}(j^*|\a'|\pi_2^*\varphi)=\pi_{1*}(|\a'|\pi_2^*j^*\varphi)\in C^\infty(U^{D,ev}(K),|\om|^{1/2}\ot |\om_\pi|^{1/2}),$$
where the push-forward on the right converges.

Recall that by Theorem \ref{vs-smooth-thm}, $\pi_*(j^*\varphi)$ converges as a distribution and belongs to $C^\infty(U^{odd}(K),|\om|^{1/2})$. By assumption, $\pi_*(j^*\varphi)$ is of class $L^2$.
Next, we claim that 
\begin{equation}\label{H_x-interm-eq}
\pi_{1*}\pi_*(|\a'|\pi_2^*j^*\varphi)=H_x(\pi_*(j^*\varphi)),
\end{equation}
where the push-forward on the left converges as a distribution. 
Indeed, by the smooth base change (see Lemma \ref{integration-lem}(iii)), we have equality of twisted distributions on $V$,
$$\pi_*(\pi_2^*j^*\varphi)=\pi_2^*\pi_*(j^*\varphi).$$
Hence,
$$\pi_{1*}\pi_*(|\a'|\pi_2^*j^*\varphi)=\pi_{1*}(|\a'|\pi_*\pi_2^*j^*\varphi)=\pi_{1*}(|\a'|\pi_2^*\pi_*(j^*\varphi))=H_x(\pi_*(j^*\varphi)).$$

It follows that from \eqref{H_x-interm-eq} that we have an equality of (twisted) distributions on $U^{ev}$,
$$H_x(\pi_*(j^*\varphi))=\pi_{1*}\pi_*(|\a'|\pi_2^*j^*\varphi)=\pi_*\pi_{1*}(|\a'|\pi_2^*j^*\varphi)=\pi_*j^*H_x^S(\varphi),$$
where for the second equality we used Lemma \ref{integration-lem}(iv). This finishes the proof.

The last assertion follows from the boundedness of $H_x$ as an operator on the $L^2$ spaces.
\end{proof}

\subsection{The proof of the $L^2$ property}


\begin{proof}[Proof of Theorem A]
By Proposition \ref{L2-prop-def}, it is enough to consider the case $\kappa=1/2$.

Our starting point is that by Proposition \ref{conf-L2-prop}, the $L^2$ property holds for the open locus 
$$(\P^1)^N/\PGL_2\sub \Bun^{ev}_{\PGL_2}(\P^1,p_1,\ldots,p_N)$$ 
corresponding to the trivial bundle.

Next, we use the Hecke operators at a fixed point $x\neq p_i$. Let us define iteratively open substacks $\XX_i\sub\Bun_{\PGL_2}(\P^1,p_1,\ldots,p_N)$ as follows.
We set $\XX_0=(\P^1)^N/\PGL_2$, the locus where the bundle is trivial. If $\XX_i$ is already defined, we define $\XX_{i+1}$ as the image of $\XX_i$ under the correspondence $Z_x$
(i.e., $\XX_{i+1}$ consists of all elementary Hecke modifications of bundles in $\XX_i$). It is easy to see that the open substacks $(\XX_{2i})$ cover $\Bun^{ev}_{\PGL_2}(\P^1,p_1,\ldots,p_N)$
(resp., $(\XX_{2i+1})$ cover $\Bun^{odd}_{\PGL_2}(\P^1,p_1,\ldots,p_N)$).

For each $i$ and a sufficiently positive divisor $D\sub C$ (disjoint from $p_i$ and $x$), we have a presentation $\pi:\XX_i^D\to \XX_i$ corresponding to choices of trivialization at $D$.
By Lemma \ref{local-lem}, it is enough to prove that for all $x\in \XX_i^D(K)$ there exists a nonnegative $\varphi\in \SS(\XX_i^D(K),|\om|^{1/2}\ot |\om_\pi|^{1/2})$, such that $\varphi(x)>0$
and $\lan\varphi,\varphi\ran<+\infty$. We already know that this is so for $i=0$. Thus, it is enough to check that if this is true for $i$ then it is also true for $i+1$.

To this end we use Proposition \ref{Hecke-comp-prop} along with Lemma \ref{very-stable-lem}. Let $\varphi$ be a nonnegative element of $\SS(\XX_i^D(K),|\om|^{1/2}\ot |\om_\pi|^{1/2})$.
Then $H_x^S(\varphi)$ is a nonnegative element of $\SS(\XX_{i+1}^D(K),|\om|^{1/2}\ot |\om_\pi|^{1/2})$, positive on all Hecke modifications of the points in the support of $\varphi$.
Thus, it is enough that $\lan H_x^S(\varphi),H_x^S(\varphi)\ran<+\infty$. By Lemma \ref{very-stable-lem}, it is enough to check that $\pi_*(H_x^S(\varphi)|_{\XX_{i+1}^{D,vs}(K)})$ is
of class $L^2$ (where $\XX_{i+1}^{D,vs}\sub \XX_{i+1}^D$ is the very stable locus). By Proposition \ref{Hecke-comp-prop}, this follows from the fact that $\varphi$ is of class $L^2$.
\end{proof}

\appendix

\section{Schwartz spaces for quasiparabolic bundles and very stable locus}
\label{appendix-vs-sec}

We think of quasiparabolic structures at $p_1,\ldots,p_N\in C$ on a rank $2$ bundle $V$ over $C$, as parabolic structures corresponding to the weights $(0,1/2)$,
i.e., as collections of decreasing filtrations
$$V|_{p_i}=F^0V|_{p_i}\supset F^{1/2}V|_{p_i}\supset F^1V|_{p_i}=0,$$
where $F^{1/2}V|_{p_i}=\ell_i$ is a $1$-dimensional subspace in $V|_{p_i}$. 
So there is a $1$-dimensional weight $0$ subquotient $F^0V|_{p_i}/F^{1/2}V|_{p_i}$ and a $1$-dimensional weight $1/2$-subquotient $F^{1/2}V|_{p_i}/F^1V|_{p_i}$,
which makes the total weight of $V|_{p_i}$ equal to $1/2$.

Thus, the parabolic degree of $V$ is $\deg^{par}(V)=\deg(V)+\sum_i wt(V|_{p_i})=\deg(V)+N/2$. The parabolic slope is $\mu^{par}(V)=\deg^{par}(V)/2$.

Similarly, we consider parabolic structures on a line bundle $L$ at points $p_i$, given by the filtrations $L=F^0L|_{p_i}\supset F^{1/2}L|_{p_i}\supset F^1L|_{p_i}=0$,
so it is simply given by an assignment of a weight for each point $p_i$: we say that $L|_{p_i}$ has weight $0$ if $F^{1/2}L|_{p_i}=0$, and $L|_{p_i}$ has weight $1/2$ if $L=F^{1/2}L|_{p_i}$,
and $\deg^{par}(L)=\deg(L)+N_L/2$, where $N_L$ is the number of $p_i$ such that $wt(L|_{p_i})=1/2$.

We will only consider parabolic bundles of rank $\le 2$ below.
A parabolic morphism between parabolic bundles (of rank $\le 2$) is a morphism, compatible with filtrations.
For example, the map of line bundles $f:L\to M$ is parabolic if and only if for every $p_i$ such that $wt(L|_{p_i})=1/2$ and $wt(M|_{p_i})=0$, one has $f|_{p_i}=0$.
In other words,
$$\Hom_{par}(L,M)\simeq H^0(C,L^{-1}\ot M(-\sum_{i: wt(L|_{p_i})>wt(M|_{p_i})} p_i)).$$
In particular, this space can be nonzero only if $\deg^{par}(L)\le \deg^{par}(M)$.

Given a line subbundle $L\sub V$ in a parabolic rank $2$ bundle $(V,\ell_1,\ldots,\ell_N)$, there is a canonical parabolic structure on $L$, such that for any parabolic morphism
$f:V'\to V$ that factors through $L$ as a morphism of sheaves, $f$ is actually a composition of parabolic morphisms $V'\to L$ and $L\to V$.
Namely, for every $p_i$ such that $L|_{p_i}=\ell_i$, we set $wt(L|_{p_i})=1/2$, and we set $wt(L|_{p_j})=0$ for all other points $p_j$.

Dually, given a surjection $f:V\to L$, there is a canonical quotient parabolic structure on $L$, such that any parabolic morphism $V\to V'$ factors as a composition of
parabolic morphisms. Namely, for every $p_i$ such that $f|_{p_i}(\ell_i)=0$ we set $wt(L|_{p_i})=0$, and we set $wt(L|_{p_j})=1/2$ for all other points $p_j$.

For a pair of parabolic line bundles $L$ and $M$, we set
$$\Ext^1_{par}(M,L):=H^1(C,M^{-1}\ot L(-\sum_{wt(L|_{p_i})\le wt(M|_{p_i})}p_i)).$$
It is easy to see that we have the following version of Serre duality:
$$\Hom_{par}(L,M)^*\simeq \Ext^1_{par}(M,L\ot \om_C(\sum_{i=1}^N p_i)),$$
where the weights of $L\ot \om_C(\sum_{i=1}^N p_i)$ at $p_i$ are the same as those of $L$.

\begin{lemma} Let $(V,\ell_\bullet)$ be a rank $2$ parabolic bundle, and let 
$0\to L\to V\to M\to 0$ be an exact sequence, where $L$ and $M$ are line bundles. Then equipping $L$ and $M$ with induced parabolic structures,
one can associate naturally to this sequence an element $e\in \Ext^1_{par}(M,L)$, such that $e=0$ if and only if our exact sequence admits
 a parabolic splitting $V=L\oplus M$ (so for each $i$, either $\ell_i=L|_{p_i}$ or $\ell_i=M|_{p_i}$).
\end{lemma}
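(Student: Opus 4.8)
The plan is to realize the class $e$ as the \v{C}ech class of the extension $0\to L\to V\to M\to 0$ computed with respect to \emph{parabolic} local splittings, so that the divisor twist built into the definition of $\Ext^1_{par}$ appears of its own accord. First I would record the induced structures: by construction $wt(L|_{p_i})=1/2$ exactly when $\ell_i=L|_{p_i}$, and dually $wt(M|_{p_i})=0$ exactly when $\ell_i=L|_{p_i}$ (i.e.\ when $\ell_i\subseteq\ker(V|_{p_i}\to M|_{p_i})$). Set $D=\sum_{i:\,\ell_i\ne L|_{p_i}}p_i$. Since at every $p_i$ the two weights $0,1/2$ are distinct, the inequalities $wt(M|_{p_i})>wt(L|_{p_i})$ and $wt(L|_{p_i})\le wt(M|_{p_i})$ cut out the same divisor $D$; unwinding the definition of $\Hom_{par}$, a sheaf morphism $M\to L$ is parabolic iff it vanishes along $D$, so the sheaf of parabolic homomorphisms is $M^{-1}\ot L(-D)$ and $\Ext^1_{par}(M,L)=H^1(C,M^{-1}\ot L(-D))$ is its $H^1$. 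Thus it suffices to produce a \v{C}ech $1$-cocycle for the extension valued in $M^{-1}\ot L(-D)$.

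Next I would check that local parabolic splittings exist. Over any open not meeting the $p_i$, or meeting only a point $p_i\notin D$ (where $wt(M|_{p_i})=0$ imposes no condition on a section $M\to V$), every bundle splitting is automatically parabolic. The only real point is near $p_i\in D$: there $\ell_i\ne L|_{p_i}$, so $\ell_i$ projects isomorphically onto $M|_{p_i}$ under $V|_{p_i}\to M|_{p_i}$, and I can choose a local bundle splitting $s_\alpha\colon M\to V$ whose value at $p_i$ equals $\ell_i$. As parabolicity of $s_\alpha$ at $p_i$ means precisely $s_\alpha(M|_{p_i})\subseteq\ell_i$, such an $s_\alpha$ is parabolic. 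Choosing the $s_\alpha$ on a cover $\{U_\alpha\}$, the differences $s_\alpha-s_\beta$ land in $L$; at $p_i\in D$ both $s_\alpha,s_\beta$ send $M|_{p_i}$ into $\ell_i$, while $\ell_i\cap L|_{p_i}=0$, so $(s_\alpha-s_\beta)|_{p_i}=0$ as a map into $L$. Hence $\{s_\alpha-s_\beta\}$ is a cocycle valued in $M^{-1}\ot L(-D)$, and I define $e\in\Ext^1_{par}(M,L)$ to be its class; independence of the choices of $s_\alpha$ is the usual argument.

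Finally I would prove the equivalence. By construction $e=0$ iff the cocycle is the coboundary of parabolic maps $t_\alpha\colon M\to L$ (sections of $M^{-1}\ot L(-D)$), in which case $s_\alpha-t_\alpha$ glue to a global parabolic section $s\colon M\to V$ of the projection, giving a parabolic splitting $V=L\oplus s(M)$. At each $p_i$ one then has $\ell_i=L|_{p_i}$ for $i\notin D$ and $\ell_i=s(M)|_{p_i}$ for $i\in D$, which is exactly the asserted compatibility; the converse implication is immediate.

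The main difficulty — and the conceptual heart of the statement — is the identification carried out in the second paragraph: showing simultaneously that the local splittings may be taken parabolic and that their differences vanish along $D$, so that the cocycle lives in the twisted sheaf $M^{-1}\ot L(-D)$ rather than in $M^{-1}\ot L$. This rests on the single pointwise linear-algebra observation that a line $\ell_i\ne L|_{p_i}$ both projects isomorphically onto $M|_{p_i}$ and meets $L|_{p_i}$ trivially, which is precisely what accounts for the divisor $D$ appearing in the definition of $\Ext^1_{par}$.
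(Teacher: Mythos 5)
Your proof is correct. For comparison: the paper states this lemma without proof, treating it as a standard fact, so there is no authors' argument to measure yours against; your \v{C}ech construction is the natural one and it is complete. The two hinges are handled correctly: (a) since the induced weights of $L$ and $M$ at each $p_i$ are always distinct ($0$ and $1/2$ in some order), the divisor $\sum_{wt(L|_{p_i})\le wt(M|_{p_i})}p_i$ appearing in the definition of $\Ext^1_{par}(M,L)$ and the divisor $\sum_{wt(M|_{p_i})>wt(L|_{p_i})}p_i$ controlling parabolicity of morphisms $M\to L$ coincide, both being $D=\sum_{i:\,\ell_i\ne L|_{p_i}}p_i$ — this correctly reconciles the non-strict inequality in the paper's definition of $\Ext^1_{par}$ with the strict one in $\Hom_{par}$; (b) for $p_i\in D$ the transversality $\ell_i\cap L|_{p_i}=0$ gives both the existence of parabolic local splittings (prescribe $s_\alpha|_{p_i}$ to be the inverse of the isomorphism $\ell_i\to M|_{p_i}$) and the uniqueness of their value at $p_i$, so the differences $s_\alpha-s_\beta$ vanish along $D$ and the cocycle genuinely lands in $M^{-1}\ot L(-D)$. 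If you want a more invariant packaging, the same pointwise computation shows that $0\to M^{-1}\ot L(-D)\to \mathcal{H}om_{par}(M,V)\to \mathcal{H}om(M,M)\to 0$ is an exact sequence of locally free sheaves (the middle term being the elementary modification of $M^{-1}\ot V$ at the points of $D$ along the lines $\ell_i$), and $e$ is the image of $\id_M$ under the connecting map $H^0(C,\OO)\to H^1(C,M^{-1}\ot L(-D))$; this yields well-definedness of $e$ and the splitting criterion in one stroke.
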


Stability (resp., semistability) of a parabolic bundle $V$ means that for any parabolic line subbundle $L\sub V$ one has $\deg^{par}(L)<\mu^{par}(V)$
(resp., $\deg^{par}(L)\le \mu^{par}(V)$).

Recall that a parabolic bundle $(V,\ell_1,\ldots,\ell_N)$ is called {\it very stable} if every nilpotent parabolic morphism 
$V\to V\ot\om_C(p_1+\ldots+p_N)$ is actually zero. Such a parabolic bundle is automatically stable.

\begin{lemma}\label{vs-lim-lem} 
Let $E$ be a very stable parabolic bundle of rank $2$. Assume that $\Hom_{par}(E,V)\neq 0$ for a parabolic bundle $V$ of rank $2$, such that
$\det(V)\simeq \det(E)$ and such that $V$ fits into
an exact sequence
$$0\to L\to V\to M\to 0,$$
where $\deg^{par}(M)\le \mu^{par}(V)$.
Then $\Ext^1_{par}(M,L)=0$, where $M$ and $L$ are equipped with induced parabolic structures.
\end{lemma}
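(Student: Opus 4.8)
The plan is to argue by contradiction and manufacture a nonzero nilpotent parabolic Higgs field on $E$, contradicting very stability. Throughout set $\Om:=\om_C(p_1+\dots+p_N)$ and $\mu:=\mu^{par}(V)$. Since $\det(V)\simeq\det(E)$ and both bundles carry weights summing to $1/2$ at each $p_i$, we have $\deg^{par}(E)=\deg^{par}(V)$, so $\mu^{par}(E)=\mu$; and from $0\to L\to V\to M\to 0$ the induced weights of $L$ and $M$ add up to the weight of $V$ at every $p_i$, whence $\deg^{par}(L)+\deg^{par}(M)=\deg^{par}(V)=2\mu$, so the hypothesis $\deg^{par}(M)\le\mu$ forces $\deg^{par}(L)\ge\mu$. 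By the parabolic Serre duality recalled above, $\Ext^1_{par}(M,L)^*\simeq\Hom_{par}(L,M\ot\Om)$, so it is enough to rule out a nonzero parabolic morphism $\phi\colon L\to M\ot\Om$. Assume such a $\phi$ exists; being a nonzero map of line bundles on an integral curve, it is injective as a map of sheaves.

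Next I would exploit the given nonzero $g\in\Hom_{par}(E,V)$. I claim the composite $\bar g\colon E\xrightarrow{g}V\twoheadrightarrow M$ vanishes. Indeed, if $\bar g\neq 0$, let $K\subset E$ be the saturation of $\ker\bar g$; stability of $E$ gives $\deg^{par}(K)<\mu$, so the quotient line bundle satisfies $\deg^{par}(E/K)=2\mu-\deg^{par}(K)>\mu$, while the induced nonzero parabolic map $E/K\to M$ forces $\deg^{par}(E/K)\le\deg^{par}(M)\le\mu$, a contradiction. Hence $g$ factors as $E\xrightarrow{h}L\hookrightarrow V$ with $h\neq 0$ parabolic. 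Setting $K':=\ker h$ (saturated in $E$), the line bundle $E/K'$ embeds parabolically into $L$, so $\deg^{par}(E/K')\le\deg^{par}(L)$ and therefore $\deg^{par}(K')=2\mu-\deg^{par}(E/K')\ge 2\mu-\deg^{par}(L)=\deg^{par}(M)$.

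Finally I would assemble the Higgs field. Using $\det(E)\simeq L\ot M$ together with the sheaf embedding $E/K'\hookrightarrow L$ (so $E/K'\simeq L(-D)$ for an effective divisor $D$), one obtains $K'\simeq M(D)$, hence a nonzero parabolic inclusion $M\hookrightarrow K'$ and so $M\ot\Om\hookrightarrow K'\ot\Om$. Composing the parabolic arrows
$$\theta\colon E\twoheadrightarrow E/K'\hookrightarrow L\xrightarrow{\phi}M\ot\Om\hookrightarrow K'\ot\Om\hookrightarrow E\ot\Om$$
yields a parabolic morphism $\theta\colon E\to E\ot\Om$ which kills $K'$ and has image in $K'\ot\Om$; thus $\theta$ is nilpotent, and it is nonzero because $h$, $\phi$ and the inclusions are all injective on the relevant line bundles. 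This contradicts very stability of $E$, and the contradiction establishes $\Ext^1_{par}(M,L)=0$.

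The step I expect to be the main obstacle is the verification that the sheaf inclusion $M\hookrightarrow K'\simeq M(D)$ is genuinely a \emph{parabolic} morphism, i.e.\ that it vanishes at each $p_i$ where $wt(M|_{p_i})>wt(K'|_{p_i})$. This is the single point where the combinatorics of the induced weights of $L$, $M$, $K'$ and $E/K'$ at the marked points must be matched against the divisor $D$ along which $h$ fails to be surjective. I expect the extra twist by $\sum_i p_i$ built into $\Om$ — which is exactly what makes the relevant objects \emph{strongly} parabolic Higgs fields — to provide precisely the room needed for this compatibility; everything else is a routine parabolic-degree count.
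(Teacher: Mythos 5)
Your construction is, up to relabeling, the paper's own proof: the paper sets $L':=\im(E\to L)$ (your $E/K'$) and $M':=\ker(E\to L')$ (your $K'$), and builds exactly the same nilpotent parabolic map $E\to L'\to L\to M\ot\om_C(\sum_i p_i)\to M'\ot\om_C(\sum_i p_i)\to E\ot\om_C(\sum_i p_i)$, contradicting very stability. The steps you actually carry out are all correct: the reduction $\bar g=0$ by stability (the paper states this with the same one-line degree reason you spell out; note $\ker\bar g$ is automatically saturated since $M$ is torsion-free, so your saturation step is vacuous), the parabolic factorization $E\to L\hra V$ via the canonical property of the induced structure on $L$, and the determinant identification $K'\simeq M(D)$.

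However, the step you defer --- parabolicity of $M\hra K'$ --- is a genuine gap as written, and your guess at the mechanism is wrong: the twist by $\sum_i p_i$ inside $\om_C(\sum_i p_i)$ plays no role here (by the paper's convention, tensoring with $\om_C(\sum_i p_i)$ leaves all weights unchanged; that twist only enters through the Serre duality producing $\phi$). The correct mechanism, which is the one point the paper verifies explicitly, is weight complementarity plus parabolicity of $E/K'\hra L$. Since $K'\sub E$ and $E/K'$ are an induced sub/quotient pair, and likewise $L,M$ for $V$, one has $wt(K'|_{p_i})+wt((E/K')|_{p_i})=1/2=wt(L|_{p_i})+wt(M|_{p_i})$ at every $p_i$. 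A point is problematic for $M\to K'$ exactly when $wt(M|_{p_i})=1/2$ and $wt(K'|_{p_i})=0$, which by complementarity means $wt((E/K')|_{p_i})=1/2$ and $wt(L|_{p_i})=0$; at such a point the parabolic morphism $E/K'\hra L$ has strictly decreasing weight and hence vanishes, so $p_i\in D$, where $E/K'\simeq L(-D)$. Since under $K'\simeq M(D)$ the inclusion $M\hra M(D)$ vanishes on fibers precisely along $D$, it vanishes at every problematic $p_i$, i.e.\ $M\hra K'$ is parabolic. With this paragraph supplied, your proof is complete and coincides with the paper's.
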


\begin{proof}
Assume $\Ext^1_{par}(M,L)\neq 0$. Then $\Hom_{par}(L,M\ot \om(\sum_i p_i))\neq 0$.
Let us pick a nonzero parabolic morphism 
$$L\to M\ot \om(\sum_i p_i).$$
On the other hand, let us pick a nonzero parabolic morphism $E\to V$. The composition $E\to V\to M$ vanishes by stability of $E$ (note that $\mu^{par}(E)=\mu^{par}(V)$).
Hence, we get a nonzero parabolic morphism $E\to L$. Let $L'\sub L$ denote its image, which we equip with a parabolic structure as on a quotient of $E$.
Set $M':=\ker(E\to L')$, equipped with a canonical parabolic structure. We have
$$L\ot M\simeq \det(V)\simeq \det(E)\simeq L'\ot M'.$$
Hence, the embedding $L'\hra L$ induces an embedding $M\hra M'$, which is easily seen to be a parabolic morphism
(since $L'\hra L$ is a parabolic morphism and $wt(L|_{p_i})+wt(M|_{p_i})=1/2=wt(L'|_{p_i})+wt(M'|_{p_i})$).
Thus, we get a sequence of nonzero parabolic morphisms with the nonzero composition, 
$$\phi:E\to L'\to L\to M\ot \om(\sum_i p_i)\to M'\ot \om(\sum_i p_i)\to E\ot \om(\sum_i p_i).$$
Since $\phi^2=0$ this contradicts the assumption that $E$ is very stable. Hence, $\Ext^1_{par}(M,L)=0$.
\end{proof}

In the following analog of  \cite[Lemma 6.4]{BKP}) we consider parabolic bundles with a fixed determinant $L_0$.
For a point $x$ of a smooth stack $\XX$ we denote
by $\chi_{\om,x}$ the character of the group $\Aut(x)$ corresponding to the canonical line bundle on $\XX$. 
For an algebraic group $H$, we denote by
$\De^{alg}_H$ the algebraic modular character of $H$ (given by the inverse determinant of the adjoint action).

\begin{lemma}\label{weights-lem}
Let $V=A\oplus B$ be a parabolic sum of two line bundles, such that $\ell_i=A|_{p_i}$ for $i=1,\ldots,k$ and $\ell_i=B|_{p_i}$ for $i=k+1,\ldots,N$.
Let $x_V$ be the corresponding point of $\Bun_{L_0}(C,p_1,\ldots,p_N)$, where $L_0=A\ot B$.
Assume that $\deg^{par}(A)>\deg^{par}(B)$ and $\Ext^1_{par}(B,A)=0$. 
Let $\la^\vee:\G_m\to \Aut_{par}(V)$ denote the embedding such that $A$ has weight $1$ and $B$ has weight $-1$ with respect to $\la^\vee$.
Then
$$\lan\chi_{\om,x_V},\la^\vee\ran=2(\chi(AB^{-1}(-\sum_{i>k} p_i))-\chi(A^{-1}B(-\sum_{i\le k} p_i)))=4(\deg^{par}(A)-\deg^{par}(B)),$$
$$-\lan\De^{alg}_{Aut_{par}(V)},\la^\vee\ran=2\chi(AB^{-1}(-\sum_{i>k} p_i))=2(\deg^{par}(A)-\deg^{par}(B)-N/2-g+1).$$
In particular, for $\kappa\in\R$ such that $\kappa > \frac{1}{2}-\frac{N/2+g-1}{2(\deg^{par}(A)-\deg^{par}(B))}$, one has
$$\kappa\cdot \lan\chi_{\om,x_V},\la^\vee\ran\neq -\lan\De^{alg}_{Aut_{par}(V)},\la^\vee\ran.$$
\end{lemma}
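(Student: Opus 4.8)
The plan is to read off both pairings from the deformation theory of $\Bun_{L_0}(C,p_1,\ldots,p_N)$ at $x_V$. First I would identify the sheaf of infinitesimal symmetries: the tangent complex of $\Bun_{L_0}$ at $V$ is $R\Ga(C,\mathcal{E}nd^0_{par}(V))[1]$, where $\mathcal{E}nd^0_{par}(V)$ is the sheaf of traceless parabolic endomorphisms of $(V,\ell_\bullet)$ (those $\phi$ with $\phi(\ell_i)\sub\ell_i$). Writing $V=A\oplus B$ and decomposing the traceless endomorphisms as $\mathcal{H}om(B,A)\oplus\OO\oplus\mathcal{H}om(A,B)$, the condition $\phi(\ell_i)\sub\ell_i$ forces the off-diagonal component pointing away from $\ell_i$ to vanish at $p_i$. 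Since $\ell_i=A|_{p_i}$ for $i\le k$ and $\ell_i=B|_{p_i}$ for $i>k$, and since $\la^\vee(t)$ acts by $t$ on $A$ and $t^{-1}$ on $B$ (so $\mathcal{H}om(B,A)=AB^{-1}$ carries weight $+2$ and $\mathcal{H}om(A,B)=A^{-1}B$ weight $-2$), this produces the $\la^\vee$-weight decomposition
$$\mathcal{E}nd^0_{par}(V)=\underbrace{AB^{-1}(-\textstyle\sum_{i>k}p_i)}_{+2}\oplus\underbrace{\OO}_{0}\oplus\underbrace{A^{-1}B(-\textstyle\sum_{i\le k}p_i)}_{-2}=:P_+\oplus P_0\oplus P_-.$$
Matching the twisting divisors against the formulas recalled earlier identifies $H^0(P_+)=\Hom_{par}(B,A)$, $H^1(P_+)=\Ext^1_{par}(B,A)$, and $H^0(P_-)=\Hom_{par}(A,B)$.

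For the canonical character I would use that the fiber of $\om_\XX$ at $x_V$ is $\det H^0(\mathcal{E}nd^0_{par}(V))\ot\det H^1(\mathcal{E}nd^0_{par}(V))^{-1}$ as a representation of $\Aut_{par}(V)$ (since $\om_\XX=\det(T_\XX)^{-1}$ and $T_\XX=R\Ga(\mathcal{E}nd^0_{par})[1]$). Pairing with $\la^\vee$, the weight-$0$ summand $P_0$ drops out and the weights $\pm 2$ combine Euler characteristics, giving
$$\lan\chi_{\om,x_V},\la^\vee\ran=2\chi(P_+)-2\chi(P_-).$$
A Riemann–Roch computation of $\chi(P_\pm)$ on $C$, together with $\deg^{par}(A)=\deg(A)+k/2$ and $\deg^{par}(B)=\deg(B)+(N-k)/2$, then collapses the right-hand side to $4(\deg^{par}(A)-\deg^{par}(B))$, which is the first claimed formula.

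For the modular character, the Lie algebra of $\Aut_{par}(V)$ is $H^0(\mathcal{E}nd^0_{par}(V))$, so $-\lan\De^{alg}_{\Aut_{par}(V)},\la^\vee\ran$ is exactly the $\la^\vee$-weight of $\det H^0(\mathcal{E}nd^0_{par}(V))$, namely $2\dim H^0(P_+)-2\dim H^0(P_-)$. Here the two hypotheses enter at precisely the right places: $\Ext^1_{par}(B,A)=0$ means $H^1(P_+)=0$, so $\dim H^0(P_+)=\chi(P_+)$; and $\deg^{par}(A)>\deg^{par}(B)$ forces $\Hom_{par}(A,B)=0$ (a parabolic map $A\to B$ can be nonzero only if $\deg^{par}(A)\le\deg^{par}(B)$), so $H^0(P_-)=0$. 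Hence $-\lan\De^{alg}_{\Aut_{par}(V)},\la^\vee\ran=2\chi(P_+)$, and the same Riemann–Roch rewrites this as $2(\deg^{par}(A)-\deg^{par}(B)-N/2-g+1)$.

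Finally, setting $d:=\deg^{par}(A)-\deg^{par}(B)>0$, the two formulas read $\lan\chi_{\om,x_V},\la^\vee\ran=4d$ and $-\lan\De^{alg}_{\Aut_{par}(V)},\la^\vee\ran=2(d-N/2-g+1)$; the equation $4d\kappa=2(d-N/2-g+1)$ has the unique solution $\kappa=\tfrac12-\tfrac{N/2+g-1}{2d}$ (using $d\neq 0$), so any $\kappa$ strictly larger than this value gives $\kappa\lan\chi_{\om,x_V},\la^\vee\ran\neq -\lan\De^{alg}_{\Aut_{par}(V)},\la^\vee\ran$, as asserted. I expect the main obstacle to lie entirely in the first paragraph: pinning down the weight-graded deformation sheaf with the correct twisting divisors, matching them to the earlier $\Hom_{par}/\Ext^1_{par}$ conventions, and fixing the sign in the identification of the fiber of $\om_\XX$. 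Once these are settled, the remaining steps are Riemann–Roch bookkeeping and an elementary one-line inequality.
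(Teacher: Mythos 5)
Your proof is correct and follows essentially the same route as the paper's: the same weight decomposition $\und{\End}_{par,0}(V)\simeq \OO\oplus AB^{-1}(-\sum_{i>k}p_i)\oplus A^{-1}B(-\sum_{i\le k}p_i)$ with $\la^\vee$-weights $(0,2,-2)$, the canonical character read off from $\det H^0\ot(\det H^1)^{-1}$ of the tangent complex, and the modular character computed from $H^0$ using exactly the two vanishings $\Hom_{par}(A,B)=0$ (forced by $\deg^{par}(A)>\deg^{par}(B)$) and $\Ext^1_{par}(B,A)=H^1(AB^{-1}(-\sum_{i>k}p_i))=0$. The only difference is expository: the paper leaves the deformation-theoretic identification of $\om_{\XX}$ and the Riemann--Roch bookkeeping implicit, which you spell out (correctly, including the twisting divisors matching the $\Hom_{par}/\Ext^1_{par}$ conventions).
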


\begin{proof}
Since $wt(A|_{p_i})=1/2$ and $wt(B|_{p_i})=0$ for $i\le k$ (resp., $wt(A|_{p_i})=0$ and $wt(B|_{p_i})=1/2$ for $i>k$),
we have
$$\und{\End}_{par,0}(V)\simeq \OO\oplus AB^{-1}(-\sum_{i>k} p_i)\oplus BA^{-1}(-\sum_{i\le k}p_i).$$
The $\la^\vee$-weights of this decomposition are $(0,2,-2)$, which leads to the formula for $\lan\chi_{\om,x_V},\la^\vee\ran$.

On the other hand, the Lie algebra of $\Aut_{par}(V)$ is
$$\End_{par,0}(V)=H^0(C,\OO)\oplus H^0(C,AB^{-1}(-\sum_{i>k}p_i)),$$
where we use the vanishing $\Hom_{par}(A,B)=H^0(BA^{-1}(-\sum_{i\ge k}p_i))=0$ due to the assumption on parabolic degrees of $A$ and $B$.
Since $H^1(C,AB^{-1}(-\sum_{i>k}p_i))=\Ext^1_{par}(B,A)=0$ by assumption, taking into account that
$$\deg^{par}(A)-\deg^{par}(B)=\deg(A)-\deg(B)+(k-(N-k))/2=\deg(A)-\deg(B)+k-N/2,$$
we get the claimed formula for $\lan\De^{alg}_{Aut_{par}(V)},\la^\vee\ran$.
\end{proof}

\bigskip

\begin{proof}[Proof of Theorem \ref{vs-smooth-thm}]
The proof is parallel to the proof of \cite[Thm.\ 6.8]{BKP}, using Lemmas \ref{vs-lim-lem} and \ref{weights-lem}.
Set $\Bun_{L_0}=\Bun_{L_0}(C,p_1,\ldots,p_N)$ for brevity. We will use the notion of a $|\om|^{\kappa}$-nice pair introduced in \cite{BKP}.

We consider the exhaustive filtration by open substacks, where $n$ is a half-integer,
$$\ldots \sub (\Bun_{L_0}^{\le n})^0\sub \Bun_{L_0}^{\le n}\sub (\Bun_{L_0}^{\le n+1/2})^0\sub\ldots \sub \Bun_{L_0},$$
where $\Bun_{L_0}^{\le n}$ consists of parabolic bundles $E$ such that $\deg^{par}(A)\le n$ for any line subbundle $A\sub V$ with the induced parabolic structure,
and $(\Bun_{L_0}^{\le n})^0\sub \Bun_{L_0}^{\le n}$ is the locus consisting of $E$ such that for any line subbundle $A\sub V$ with $\deg^{par}(A)=n$, one has 
$$\Ext^1_{par}(E/A,A)=H^1(L_0^{-1}A^2(-\sum_{i: wt(A|_{p_i})=0} p_i))=0$$
(where $A$ and $E/A$ are equipped with the induced parabolic structure).
The following properties are easily checked (similarly to \cite[Lem.\ 6.3]{BKP}):
\begin{itemize}
\item For $E\in \Bun_{L_0}^{\le n}$, one has $E\in (\Bun_{L_0}^{\le n})^0$ if and only if for any parabolic line bundle $A$ with $\deg^{par}(A)=n$, such that
$H^1(L_0^{-1}A^2(-\sum_{i: wt(A|_{p_i})=0} p_i))=0$, one has $\Hom_{par}(A,E)=0$.
\item For $E\in (\Bun_{L_0}^{\le n})^0\setminus \Bun_{L_0}^{\le n-1/2}$, one has a parabolic splitting $E\simeq A\oplus B$, where $\deg^{par}(A)=n$.
\item $(\Bun_{L_0}^{\le n})^0=\Bun_{L_0}^{\le n-1/2}$ unless $n\ge (g-1+N+\deg(L_0))/2$.
\item $(\Bun_{L_0}^{\le n})^0=\Bun_{L_0}^{\le n}$ for $n>g-1+(N+\deg(L_0))/2$.
\end{itemize}

Note that Lemma \ref{vs-lim-lem} implies that for $n\ge \deg(L_0)/2+N/4$, and for a very stable $E\in \Bun_{L_0}$, any $V$ in $\Bun_{L_0}^{\le n}$ with $\Hom_{par}(E,V)\neq 0$
is actually in $(\Bun_{L_0}^{\le n})^0$.

Now for an open substack of finite type $[X/H]\sub \Bun_{L_0}$, we have an induced filtration by $H$-invariant open subsets,
$$\ldots \sub (X^{\le n})^0\sub X^{\le n}\le (X^{\le n+1/2})^0\sub\ldots \sub X.$$
We want to show that the pair $(X^{vs},X^{\le n})$ is $|\om|^{\kappa}$-nice for every $n$, where $\Re(\kappa)\ge 1/2$,
where $X^{vs}\sub X$ is the very stable locus.

If $n<\deg(L_0)/2+N/4$ then this follows from the fact that $X^{\le n}$ is contained in the stable locus $X^s$.

Next, assume that $n<(g-1+N+\deg(L_0))/2$ and the assertion holds for $n-1/2$. Then closure of $G$-orbits of points in $X^{vs}$ are contained in $X^s$,
so the assertion follows from \cite[Lem.\ 5.4]{BKP}.

Next, assume that $n\ge (g-1+N+\deg(L_0))/2$ and the assertion holds for $n-1/2$, i.e., $(X^{vs},X^{\le n-1/2})$ is $|\om|^{\kappa}$-nice. 
We claim that the pair $(X^{vs},(X^{\le n})^0)$ is $|\om|^{\kappa}$-nice.
Indeed, since the bundles in $(X^{\le n})^0\setminus X^{\le n-1/2}$ are parabolically split, this follows from the weights computation in Lemma \ref{weights-lem} and
\cite[Lem.\ 5.2]{BKP}.

Finally, using Lemma \ref{vs-lim-lem} together with \cite[Lem.\ 5.4]{BKP}, we see that if the pair $(X^{vs},(X^{\le n})^0)$ is $|\om|^{\kappa}$-nice then so is
$(X^{vs},X^{\le n})$.
\end{proof}

\section{Configuration spaces for projective spaces}

Here we consider the $L^2$-property for stacks $[(\P^{n-1})^r/\SL_n]$.

\begin{prop} The stack $[(\P^{n-1})^r/\SL_n]$ is $L^2$ for $r>2(n-1)$.
\end{prop}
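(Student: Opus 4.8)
The plan is to imitate the proof of Proposition \ref{conf-L2-prop}. Write $V=K^n$ for the standard representation of $\SL_n$. Since $(\P^{n-1})^r=(V\setminus 0)^r/\G_m^r$ is an open substack of $[V^r/\G_m^r]$, the stack $[(\P^{n-1})^r/\SL_n]$ is open in $\XX_r:=[V^r/(\SL_n\times\G_m^r)]$; as open substacks of $L^2$-stacks are $L^2$, it suffices to prove that $\XX_r$ is $L^2$. I would do this via the weight criterion of Proposition \ref{weight-crit-prop} applied to the split reductive group $\SL_n\times\G_m^r$.

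Setting up the combinatorics: a dominant coweight is a pair $(\nu_0;n_1,\dots,n_r)$ with $\nu_0=(a_1\ge\cdots\ge a_n)$, $\sum_j a_j=0$, and $n_i\in\Z$; the weights of $V^r$ are the $e_j+f_i$ (each of multiplicity one), where $e_j$ are the standard weights of $V$ and $f_i$ the characters of the $\G_m$-factors, so that $\lan e_j+f_i,(\nu_0;n_\bullet)\ran=a_j+n_i$. Using $\sum_j a_j=0$ and that the $\G_m$-factors contribute nothing to $2\rho$, the exponent in Proposition \ref{weight-crit-prop} factors, and the series becomes
\[
\sum_{\nu_0}q^{\lan 2\rho,\nu_0\ran}\,S(\nu_0)^r,\qquad S(\nu_0):=\sum_{m\in\Z}q^{-g(m)},
\]
with $g(m)=\tfrac n2 m-\sum_{j:\,a_j+m<0}(a_j+m)$. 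The first key observation I would record is the identity $g(m)=\tfrac12\sum_{j=1}^n|a_j+m|$ (obtained by combining $g$ with its dual expression $\sum_j(a_j+m)^{+}-\tfrac n2 m$). Hence $g$ is convex and piecewise linear, minimized at $m^*=-\mu$ for a median $\mu$ of the $a_j$, with
\[
\min_m g(\nu_0)=\tfrac12\sum_{j=1}^n|a_j-\mu|.
\]
Because the slopes of $g$ jump by one at the breakpoints, the geometric tails of $S(\nu_0)$ away from the minimum contribute a bounded factor, while the possible plateau at the minimum contributes a factor linear in the entries of $\nu_0$; thus $S(\nu_0)\le P(\nu_0)\,q^{-\min_m g(\nu_0)}$ for a linear $P$.

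The crux is the inequality $\lan 2\rho,\nu_0\ran\le 2(n-1)\min_m g(\nu_0)$. Writing $\lan2\rho,\nu_0\ran=\sum_{i<j}(a_i-a_j)=\sum_{i<j}|a_i-a_j|$ (the entries being sorted), this is exactly
\[
\sum_{i<j}|a_i-a_j|\le(n-1)\sum_{j=1}^n|a_j-\mu|,
\]
which follows at once from the triangle inequality $|a_i-a_j|\le|a_i-\mu|+|a_j-\mu|$ summed over pairs, each $|a_j-\mu|$ occurring in $n-1$ pairs. The first-fundamental-coweight ray $\nu_0=((n-1)t,-t,\dots,-t)$ gives equality, which is what pins the threshold at $2(n-1)$. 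Granting this, for $r>2(n-1)$ the exponent of each summand is at most $\bigl(2(n-1)-r\bigr)\min_m g(\nu_0)$, so the summand is bounded by $P(\nu_0)^r\,q^{-(r-2(n-1))\min_m g(\nu_0)}$. Since $\nu_0\mapsto\min_m g(\nu_0)$ is a norm on the rank $n-1$ coweight lattice of $\SL_n$, the number of $\nu_0$ with $\min_m g\le R$ grows polynomially in $R$, so the geometric decay beats the polynomial factor and the series converges.

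I expect the main obstacle to be the two structural facts about $g$: recognizing the clean formula $g(m)=\tfrac12\sum_j|a_j+m|$ (and hence the median description of $\min_m g$), and then the sharp comparison $\lan2\rho,\nu_0\ran\le 2(n-1)\min_m g$. Once these are in hand, the convergence bookkeeping—the polynomial bound on $S(\nu_0)$ and the lattice-point count—is routine, exactly as in the $\SL_2$ case of Proposition \ref{conf-L2-prop}.
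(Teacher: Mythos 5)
Your proposal is correct, and at the structural level it is the paper's proof: the same open embedding of $[(\P^{n-1})^r/\SL_n]$ into $[V^r/(\SL_n\times\G_m^r)]$, the same appeal to Proposition \ref{weight-crit-prop}, and the same factorization of the resulting series as $\sum_\nu q^{\lan 2\rho,\nu\ran}S(\nu)^r$ with $S(\nu)=\sum_m q^{\ell_\nu(m)-nm/2}$. Where you differ is in how the inner sum is estimated, and there your argument is genuinely cleaner. The paper evaluates $\ell_\nu(m)$ explicitly on each range $-\nu_{k-1}\le m<-\nu_k$, sums the resulting geometric series in closed form, and then verifies the comparison with $-\lan\frac{\rho}{n-1},\nu\ran$ by separate chains of inequalities for odd and even $n$. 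You instead observe that $g(m):=\frac{n}{2}m-\ell_\nu(m)=\frac12\sum_j|a_j+m|$, so that $\min_m g=\frac12\sum_j|a_j-\mu|$ for a median $\mu$, and obtain the key bound $\lan 2\rho,\nu\ran=\sum_{i<j}|a_i-a_j|\le (n-1)\sum_j|a_j-\mu|=2(n-1)\min_m g$ in one line from the triangle inequality. This is in fact the paper's estimate in disguise: the intermediate quantity in its final displayed inequalities (e.g.\ $\frac12\lan\sum_{i\le n/2}e_i-\sum_{i>n/2}e_i,\nu\ran$ for even $n$) is precisely your $\min_m g$, and the paper's bound $S(\nu)\le C\cdot\lan 2\rho,\nu\ran\cdot q^{-\lan\rho/(n-1),\nu\ran}$ corresponds to your $S(\nu)\le P(\nu)\,q^{-\min_m g}$ combined with the triangle inequality. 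What your route buys: no parity case analysis, a conceptual explanation of the threshold via the equality case on the ray $\nu=((n-1)t,-t,\dots,-t)$, and a uniform tail estimate since the slopes of $g$ are half-integers of absolute value at least $1/2$ off the plateau (the plateau, present only for even $n$, contributes your linear factor $P(\nu)$ --- exactly the paper's $(\nu_{n/2}-\nu_{n/2+1})$ term). One pedantic point: slopes jump at breakpoints by the multiplicity of the value $a_j$, not always by one, but this only strengthens your tail bound. The concluding bookkeeping --- $\min_m g$ is a norm on the rank $n-1$ coweight lattice, so the polynomial lattice-point count loses to the geometric decay $q^{(2(n-1)-r)\min_m g}$ for $r>2(n-1)$ --- is sound and matches the paper's conclusion.
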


\begin{proof}
Similarly to the case of $\P^1$, we embed $[(\P^{n-1})^r/\SL_n]$ as an open substack into $[V^r/(\SL_n\times\G_m^r)]$,
where $V$ is the standard $n$-dimensional representation of $\SL_n$. We will prove that the latter linear quotient stack is $L^2$ using our criterion
Proposition \ref{weight-crit-prop}.
We denote the dominant coweights of $\SL_n\times\G_m^r$ as $(\nu,n_1,\ldots,n_r)$, where $\nu=(\nu_1\ge\ldots\ge\nu_n)$, $\sum \nu_i=0$.
The natural basis of $V^r$ gives $rn$ functionals on these coweights, given by 
$$(\nu,n_1,\ldots,n_r)\mapsto \nu_i+n_j, \ \ 1\le i\le n,\ 1\le j\le r.$$

For each dominant coweight $\nu$ of $\SL_n$, let us denote by $\ell_\nu$ the function on $\Z$, given by
$$\ell_\nu(m)=\sum_{i: \nu_i+m<0} (\nu_i+m).$$
By Proposition \ref{weight-crit-prop}, we need to check the convergence of
$$\sum_{\nu\ge 0, n_1,\ldots,n_r} q^{\lan 2\rho,\nu\ran-\frac{n}{2}\sum_j n_j+\sum_j \ell_\nu(n_j)}=
\sum_{\nu\ge 0}q^{\lan 2\rho,\nu\ran}\cdot (\sum_{m\in\Z} q^{\ell_\nu(m)-nm/2})^r.$$

It remains to prove that
$$\sum_{m\in\Z} q^{\ell_\nu(m)-nm/2}\le C\cdot \lan 2\rho,\nu\ran\cdot q^{-\lan \frac{\rho}{n-1},\nu\ran}$$
for some constant $C>0$ (independent of $\nu$).

We have 
$$\ell_\nu(m)=\begin{cases} nm, & m<-\nu_1,\\(m+\nu_k)+\ldots+(m+\nu_n), & -\nu_{k-1}\le m<-\nu_k,\\ 0,& m\ge -\nu_n.\end{cases}$$
Thus, it is enough to check a similar inequality for each of these ranges of $m$.
We have
$$\sum_{m<-\nu_1}q^{\ell_\nu(m)-nm/2}=\sum_{m<-\nu_1}q^{nm/2}=C\cdot q^{-\frac{n}{2}\nu_1}.$$
$$\sum_{m\ge -\nu_n}q^{\ell_\nu(m)-nm/2}=\sum_{m\ge -\nu_n}q^{-nm/2}=C\cdot q^{\frac{n}{2}\nu_n}.$$
$$\sum_{-\nu_{k-1}\le m<-\nu_k}q^{\ell_\nu(m)-nm/2}=\begin{cases}q^{\nu_k+\ldots+\nu_n}\cdot \frac{q^{-(n/2+1-k)\nu_k}-q^{-(n/2+1-k)\nu_{k-1}}}{q^{n/2+1-k}-1}, & k\neq n/2+1,\\
(\nu_{n/2}-\nu_{n/2+1})\cdot q^{\nu_{n/2+1}+\ldots+\nu_n}, & k=n/2+1.\end{cases}$$
Thus, for $k<n/2+1$, we have 
$$\sum_{-\nu_{k-1}\le m<-\nu_k}q^{\ell_\nu(m)-nm/2}\le C\cdot q^{-(\frac{n}{2}-k)\nu_k+\nu_{k+1}+\ldots+\nu_n}=C\cdot q^{-\nu_1-\ldots-\nu_{k-1}-(\frac{n}{2}+1-k)\nu_k},$$
while for $k>n/2+1$,
$$\sum_{-\nu_{k-1}\le m<-\nu_k}q^{\ell_\nu(m)-nm/2}\le C\cdot q^{(k-\frac{n}{2}-1)\nu_{k-1}+\nu_k+\ldots+\nu_n}.
$$

Now the assertion for odd $n$ follows from the inequalities
$$-\frac{n}{2}\nu_1\le -\nu_1-(\frac{n}{2}-1)\nu_2\le\ldots\le -\nu_1-\ldots-\nu_{(n-1)/2}-\frac{1}{2}\nu_{(n+1)/2}=
-\frac{1}{2}\lan \sum_{i=1}^{(n-1)/2} e_i-\sum_{i=(n+1)/2+1}^n e_i,\nu\ran,$$
$$\frac{n}{2}\nu_n\le (\frac{n}{2}-1)\nu_{n-1}+\nu_n\le \frac{1}{2}\nu_{(n+1)/2}+\nu_{(n+1)/2+1}+\ldots+\nu_n=
-\frac{1}{2}\lan \sum_{i=1}^{(n-1)/2} e_i-\sum_{i=(n+1)/2+1}^n e_i,\nu\ran,$$
$$-\frac{1}{2}\lan \sum_{i=1}^{(n-1)/2} e_i-\sum_{i=(n+1)/2+1}^n e_i,\nu\ran \le -\lan \frac{\rho}{n-1},\nu\ran.$$

Similarly, for even $n$ we use the inequalities
$$-\frac{n}{2}\nu_1\le -\nu_1-(\frac{n}{2}-1)\nu_2\le\ldots\le-\nu_1-\ldots-\nu_{n/2}=-\frac{1}{2}\lan \sum_{i=1}^{n/2} e_i-\sum_{i=n/2+1}^n e_i,\nu\ran,$$
$$\frac{n}{2}\nu_n\le (\frac{n}{2}-1)\nu_{n-1}+\nu_n\le \nu_{n/2+1}+\ldots+\nu_n=-\frac{1}{2}\lan \sum_{i=1}^{n/2} e_i-\sum_{i=n/2+1}^n e_i,\nu\ran,$$
$$-\frac{1}{2}\lan \sum_{i=1}^{n/2} e_i-\sum_{i=n/2+1}^n e_i,\nu\ran\le -\lan \frac{\rho}{n-1},\nu\ran.$$
\end{proof}

\end{document}